\definecolor{ired}{rgb}{0.702, 0.176, 0.13}
\definecolor{iorange}{rgb}{0.93, 0.53, 0.18}
\definecolor{iyellow}{rgb}{0.984, 0.808, 0.125}
\definecolor{igreen}{rgb}{0.0, 0.42, 0.24}
\definecolor{iblue}{rgb}{0.05, 0.302, 0.549}
\definecolor{iltblue}{rgb}{0, 0.737, 0.894}
\definecolor{iltgray}{rgb}{0.796, 0.80, 0.796}
\definecolor{idkgray}{rgb}{0.349, 0.353, 0.349}
\definecolor{iviolet}{rgb}{0.45, 0.31, 0.59}
\definecolor{isand}{rgb}{0.76, 0.7, 0.5}
\definecolor{ibrown}{rgb}{0.51, 0.41, 0.33}
\pgfplotsset{compat=1.17}
\DeclareMathOperator*{\argmin}{arg\,min}
\newcommand{\tobs}{t_{\text{obs}}}
\newcommand{\tapr}{t_{\text{apr}}}
\newtheorem{theorem}{Theorem}
\newtheorem{lemma}{Lemma}
\theoremstyle{definition}
\newtheorem{definition}{Definition}
\newcommand \DL{\mathrm{DL}}
\newcommand{\transpose}{\intercal}
\begin{document}
	\title{Surveillance of a Faster Fixed-Course Target}
	
	\author{Isaac~E.~Weintraub,~\IEEEmembership{Senior Member,~IEEE,}
		Alexander~Von Moll,~\IEEEmembership{Member,~IEEE,}
		Eloy Garcia,~\IEEEmembership{Senior Member,~IEEE,}
		David W. Casbeer,~\IEEEmembership{Senior Member,~IEEE,}
		and~Meir Pachter,~\IEEEmembership{Life~Fellow,~IEEE}% 
		\thanks{I. E. Weintraub is with the Control Science Center at Air Force Research Laboratory, Wright-Patterson AFB, OH, 45433 e-mail: isaac.weintraub.1@us.af.mil.}%
		\thanks{A. Von Moll, E. Garcia, and D. Casbeer are with the Control Science Center at Air Force Research Laboratory, Wright-Patterson AFB, OH, 45433}
		\thanks{M. Pachter is with the Department of Electrical and Computer Engineering at the Air Force Institute of Technology, Wright-Patterson AFB, OH, 45433}%
		\thanks{Manuscript received Month Day, Year; revised Month Day, Year.}}
	
	% The paper headers
	\markboth{Journal of IEEE Transactions on Aerospace and Electronic Systems}%
	{Shell \MakeLowercase{\textit{et al.}}: Bare Demo of IEEEtran.cls for IEEE Journals}
	
	% make the title area
	\maketitle
	
	% As a general rule, do not put math, special symbols or citations
	% in the abstract or keywords.
	\begin{abstract}
		The maximum surveillance of a target which is holding course is considered, wherein an observer vehicle aims to maximize the time that a faster target remains within a fixed-range of the observer. This entails two coupled phases: an approach phase and observation phase. In the approach phase, the observer strives to make contact with the faster target, such that in the observation phase, the observer is able to maximize the time where the target remains within range. Using Pontryagin's Minimum Principle, the optimal control laws for the observer are found in closed-form. Example scenarios highlight various aspects of the engagement. 
	\end{abstract}
	
	% Note that keywords are not normally used for peerreview papers.
	\begin{IEEEkeywords}
		Optimal Control, Aircraft Navigation, Communication System Traffic
	\end{IEEEkeywords}
	
	\IEEEpeerreviewmaketitle
	
	\section{Introduction}
	\IEEEPARstart{S}{urveillance}-evasion problems are an important class of trajectory-planning problems wherein an evader chooses its trajectory to hinder the surveillance of an enemy observer. Throughout this paper the terms \textit{surveillance} and \textit{observation} are used interchangeably.
	In an early report by Koopman, tactics and scenarios surrounding search and and screening were described in detail~\cite{koopman1946search}.
	Koopman outlined naval and aerial strategies for searching out stationary and mobile targets using range-limited means such as visual detection, radar, and sonar. 
	In his report the location of targets were considered to be unknown by the observer, and the derived strategies leveraged probability to locate targets of interest through mobile search. 
	Later, Dobbie and Taylor posed and investigated surveillance-evasion~\cite{dobbie1966solution,dobbie1967survey,taylor1970application} making use of differential game theory~\cite{isaacs1965differential}. 
	Their work posed a turn-limited observer with greater speed than the target and defined the \emph{detection region}, \emph{surveillance region}, and \emph{escape region}. %The surveillance region was defined as the part of the detection region for which the observer could sustain contact with the target, regardless of the target's strategy. The escape region was defined as the part of the detection region for which the target can escape the observer regardless of the observer's strategy. 
	
	More recent works have contributed to the surveillance-evasion differential game for a turn-limited observer~\cite{lewin1975surveillance, lewin1979conic, lewin1989isotropic, greenfeld1987differential}. 
	Where Dobbie and Taylor were concerned with the barriers in the game dictating guaranteed regions of observation or escape by a faster turn-limited observer, Lewin and Breakwell drew their attention to the game of degree~\cite{lewin1975surveillance} -- maximizing contact time assuming that the target was already in contact with the observer at the onset.
	Lewin and Olsder continued the work by changing the contact region from a circle to that of a two-dimensional cone~\cite{lewin1979conic}. 
	Lewin and Olsder also added more states to the original surveillance-evasion differential game by considering the isotropic rocket pursuit evasion game~\cite{bernhard1970linear}. 
	In the isotropic surveillance-evasion game, the target has bounded speed and can turn instantaneously and the observer has bounded acceleration and can direct it to any desired direction. 
	% Greenfield constrained the target and the observer to be equally turn-limited and speed. 
	Greenfield constrained the target and the observer to have the same speed and turn radius.
	He investigated surveillance-evasion for the game of two-cars and presented the solution to the game of kind and the game of degree~\cite{greenfeld1987differential}. 
	Some 40 years after Koopman presented strategies for search under uncertainty; Gilles and Vladimirsky posed search-evasion as a differential game under uncertainty~\cite{gilles2020evasive}. 
	A recent paper by the authors considers a surveillance-evasion scenario with a model similar to that used in this paper, but the observer's control strategy is assumed to be pure pursuit~\cite{vonmoll2022pure}.
	% Throughout this paper the terms \textit{surveillance} and \textit{observation} are used interchangeably.
	
	Different from prior work, this paper models the target to be superior -- faster than the observer. 
	This presents the observer with limited opportunity to make contact with the target; and, optimal strategies for maximizing sustained contact are also investigated. 
	Prior work that considered a superior evader can be found in an early work by Breakwell~\cite{breakwell1975pursuit} wherein a single pursuer strives to capture a faster evader using point-capture. 
	Other works have considered multiple pursuers, in an effort to \emph{contain} the superior evader, include~\cite{hagedorn1976differential, szots2021revisiting, ramana2017pursuit-evasion,  garcia2021cooperative, jin2010pursuit-evasion}. 
	More specifically, references~\cite{hagedorn1976differential, szots2021revisiting} are focused on the so-called \textit{game of approach}, a zero-sum differential game in which the fast evader must pass between two pursuers.
	The aim of the pursuers is to minimize the approach distance, i.e., the minimum distance from either pursuer to the evader at any time along the trajectory.
	On the other hand, references~\cite{ramana2017pursuit-evasion, garcia2021cooperative, jin2010pursuit-evasion} are concerned with formations of pursuers which encircle the superior evader and determining whether there is a possibility for the evader to escape in any of the gaps between the pursuers.
	As pointed out in~\cite{chernousko1976problem}, point capture of a superior evader is not possible, even with multiple pursuers -- and thus these works endow a finite capture radius to the pursuers.
	This finite capture radius is akin to the observation disk modeled in this paper (hence the appearance of the Cartesian oval, both here and in~\cite{garcia2021cooperative}).
	
	Prior work in multi-phase pursuit-evasion scenarios have been considered, but not for surveillance-evasion~\cite{breakwell1979point, garcia2019strategies, garcia2018capture-the-flag, nath2022two-phase, shinar2009pursuit-evasion, turetsky2018pursuit-evasion,vonmoll2021turret, vonmoll2022turret-runner-penetrator}.
	In~\cite{breakwell1979point}, Breakwell and Hagedorn consider the point capture of two evaders in succession. In this multi-phase optimal control problem a faster pursuer aims to capture two equal-speed evaders in succession.
	%The optimal strategy for the pursuer is either straight or curved in trajectory based upon the initial geometry and speed ratio of the pursuer to the evaders.
	Phase-I corresponds to the pursuit, and capture of, the first evader, followed by Phase-II wherein the second evader is captured.
	The cost/reward function is based upon the capture time of the second evader, which implicitly depends on the agents' trajectories in Phase-I.
	A similar setup is found in~\cite{garcia2019strategies}, but rather than maximizing capture time, the evaders seek to minimize their terminal $y$-coordinate.
	
	In capture-the-flag games, the agents' strategies change between two phases. In Phase-I, an attacker strives to capture an enemy's flag while an opposing defender tries to intercept the attacker before the attacker can reach the flag. 
	In Phase-II (conditioned by the outcome of Phase-I) the attacker then attempts to reach a safe zone while the defender strives to reach the attacker before the attacker can reach the safe zone~\cite{garcia2018capture-the-flag, huang2015automation, huang2011differential}. 
	The control strategies for both sides vary based upon which phase of the game is currently active.
	% In this paper, the observer has two separate phases: Phase-I, \emph{approach}, and Phase-II, \emph{observe}; much like the capture-the-flag game, the strategy for Phase-II is contingent of the outcome of Phase-I; but, the surveillance scenario is inherently different, albeit related, to the multi-phase problem present in capture-the-flag games. 
	
	In the work of Nath, a pursuit-evasion scenario between two non-holonomic agents is considered ~\cite{nath2022two-phase}.
	The scenario itself can be considered to be a single phase, but the proposed evader strategy is comprised of two phases: one for large separation distance, and one for when the pursuer is nearby. In the reference ~\cite{garcia2021differential}, a multi-player differential game is described where the overall game is played in two phases or stages, the attack stage and the retreat stage.
	
	Additionally, works by Shinar and Turetsky have focused on pursuit-evasion for hybrid and switched systems in which the cost/reward is based on the zero-effort miss distance~\cite{shinar2009pursuit-evasion, turetsky2018pursuit-evasion}.
	In~\cite{shinar2009pursuit-evasion}, the pursuer has two sets of dynamics, which are both known to the evader, and can switch once during the game.
	However, the evader does not know when the pursuer's dynamics switch.
	This setup is extended in ~\cite{turetsky2018pursuit-evasion} wherein the pursuer dynamics may switch many times; both the full information (where evader knows when the switches occur) and asymmetric information cases are considered.
	
	Recently, some turret defense scenarios comprised of multiple phases have been analyzed~\cite{vonmoll2021turret, vonmoll2022turret-runner-penetrator}.
	In both scenarios, the transition between phases occurs when the turret aligns its look angle with a mobile agent.
	The solution methodology involves obtaining the Value function for Phase-II and using it as the terminal cost/reward for Phase-I.
	For example, in~\cite{vonmoll2021turret}, the mobile agent is an attacker which must decide between engaging the turret or retreating to a prespecified safe zone.
	The attacker's instantaneous cost (for either choice) is a piecewise discontinuous function which depends on whether the turret is aligned or not.
	Meanwhile, in~\cite{vonmoll2022turret-runner-penetrator}, two attackers cooperate against the turret.
	In Phase-I, one of the attackers draws the turret away from the second attacker in an effort to better the position of the latter in Phase-II (once the first attacker has been neutralized).
	
	Other relevant works have considered pursuit to consist of multiple phases to maximize an objective concerning and entire pursuit-evasion scenario. 
	In \cite{turetsky2019minimum}, a pursuing missile delays a target assignment to maximize the expectation of capture of two possible evaders. 
	In Phase-I a pursuer moves toward a virtual target then makes a decision for which evader to pursue. 
	In Phase-II, the pursuer engages a specific targeted evader. 
	Later, this target assignment was scaled to consider multiple pursuers and many evaders, \cite{weiss2022minimum}.
	
	This paper extends prior work concerning only the observation phase; solved in~\cite{weintraub2020maximum, weintraub2021maximum} to include the approach phase and answer how the approach is related to the observation phase.
	The relationship between the phases is that the Value of Phase-II is treated as the terminal cost for Phase-I, giving rise to the optimal control for the whole scenario via the principle of dynamic programming.
	The observer's circular range has realistic implications including, but not limited to: 1) visual contact range, 2) sensor range, and or 3) communication range.
	Moreover, the target is assumed to have a fixed course, coinciding with two realistic possibilities: 1) the target is unaware of the observer (and thus makes no maneuver to avoid the latter), or 2) the target's is far less maneuverable than the observer and therefore its trajectory can be approximated as a straight line.
	This assumption appears also in several examples within the seminal work on missile control by Shneydor~\cite{shneydor1998missile}, as well as in examples in works by Barton and Eliezer concerning a pursuer that implements pure pursuit~\cite{barton2000pursuit}.

	In order to solve the two-phase optimal control problem for maximizing surveillance of a faster fixed-course target, optimal control theory is leveraged, specifically Pontryagin's Minimum Principle,~\cite{kirk1970optimal}. In \Cref{sec:OptimalControl} the optimal control problem is defined.
	In \Cref{sec:ObservationPhase}, the optimal strategy for Phase-II is solved; then, in \Cref{sec:ApproachPhase} the optimal strategy for the observer in Phase-I is solved. The unified optimal strategy for both phases is presented in \Cref{sec:CompleteSolution}.
	Three scenarios are presented in \Cref{sec:Examples}, highlighting the optimal strategy for the observer for various initial conditions.
	Lastly, in \Cref{sec:Conclusions}, concluding remarks and future extensions are identified.
	\section{Problem Formulation} \label{sec:OptimalControl}
	Consider the optimal observation of a faster, non-maneuvering target by a slower observer. The observer seeks to maximize the observation time of the target by controlling its heading. The speed of the observer ($O$) and the target ($T$) are $v_O$ and $v_T$ respectively. Also, define the speed ratio parameter: $\alpha \triangleq v_O/v_T$. Because the observer is slower than the target, $0<\alpha<1$. Further, and without loss of generality, consider a Cartesian coordinate frame whose y-axis is aligned with the velocity vector of the non-maneuvering constant-speed target. The state of the observer-target scenario is
	\begin{equation}
		\mathbf{x}(t) = [x_O(t),y_O(t),y_T(t)]^\intercal \in \mathbb{R}^3,
		\label{eq:state}
	\end{equation}
	see \Cref{fig:overallGeometry}.
	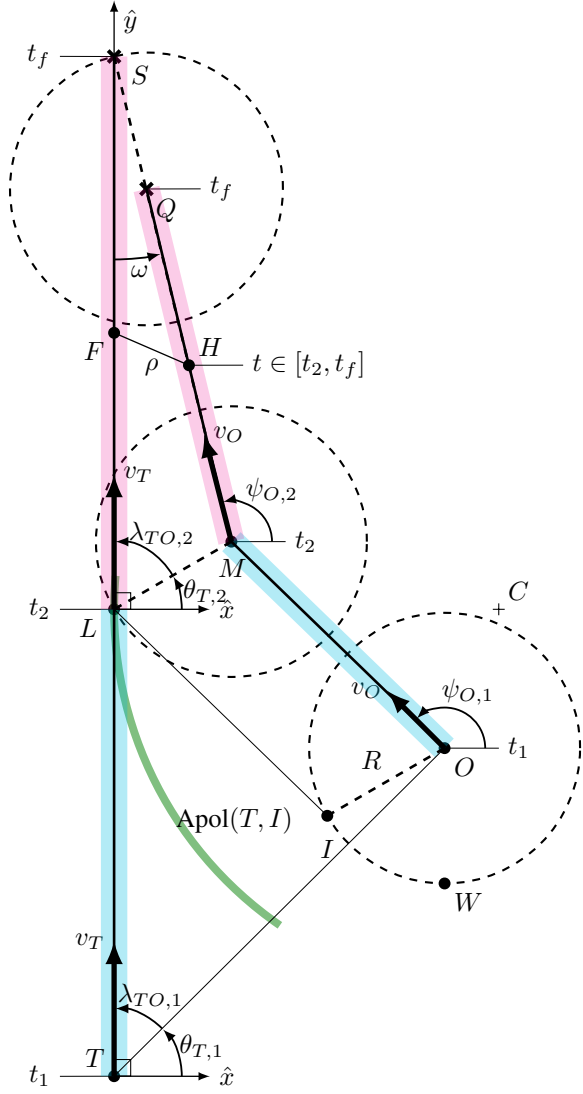
\begin{figure}[htbp]
		\centering
		\definecolor{ired}{rgb}{0.702, 0.176, 0.13}
\definecolor{iorange}{rgb}{0.93, 0.53, 0.18}
\definecolor{iyellow}{rgb}{0.984, 0.808, 0.125}
\definecolor{igreen}{rgb}{0.0, 0.42, 0.24}
\definecolor{iblue}{rgb}{0.05, 0.302, 0.549}
\definecolor{iltblue}{rgb}{0, 0.737, 0.894}
\definecolor{iltgray}{rgb}{0.796, 0.80, 0.796}
\definecolor{idkgray}{rgb}{0.349, 0.353, 0.349}
\definecolor{iviolet}{rgb}{0.45, 0.31, 0.59}
\definecolor{isand}{rgb}{0.76, 0.7, 0.5}
\definecolor{ibrown}{rgb}{0.51, 0.41, 0.33}

\begin{tikzpicture}[scale=1.8, vect/.style={->,shorten >=0.5pt,>=latex, line width = 0.2}]

% NODE LOCATIONS

\tkzDefPoint({0.7071+1.7330},{0.7071-1.7330}){O} 	% Observer Phase I
\tkzDefPoint({0.7071+1.7330},{0.7071-1.7330-1.00}){W}
\tkzDefPoint(0.866025,0.5){M} 		% Observer Phase II
\tkzDefPoint(0.2389,3.1059){Q} 		% Observer Final
\tkzDefPoint(3.4401,-1.0259){R} 	% Radius Phase I
\tkzDefPoint(0,-3.45){T} 			% Target Phase I
\tkzDefPoint(0,0){L} 				% Target Phase II
\tkzDefPoint(0,4.0847){S} 			% Target Final
\tkzDefPoint({2.4401+0.4},{0}){C} 	
\tkzDefPoint({0.7071+1.7330-0.866},{0.7071-1.7330-0.50}){I}

% Axis
\tkzDefPoint(2.0,0.7071){xhat} 		% x-axis at O2
\tkzDefPoint(0,4.5){yhat}
\tkzDefPoint(0.7071,0){xhatT} 		% x-axis at L
\tkzDefPoint(0.4,0){timeR} 			% Time R
\tkzDefPoint(-0.4,0){timeL} 			% Time L

% Velocity Vectors
\tkzDefPoint(0,{-3.45+1.0}){v_{T,1}}
\tkzDefPoint(0.6642,1.2944){v_{O,2}}
\tkzDefPoint({2.4401-0.4243},{-1.0259+0.4243}){v_{O,1}}
\tkzDefPoint(0,1){v_{T,2}}

% Label Velocity Vectors
\draw(v_{O,1}) node [right, scale = 1, shift={(-0.6,0)}]{$v_O$};
\draw(v_{O,2}) node [right, scale = 1, shift={(0,0)}]{$v_O$};
\draw(v_{T,2}) node [right, scale = 1, shift={(0,0)}]{$v_T$};
\draw(v_{T,1}) node [left, scale = 1, shift={(0,0)}]{$v_T$};

% Draw the points
\tkzDrawPoints[shape = circle, scale = 1, line width = 1.5](M)
\tkzDrawPoints[shape = circle, scale = 1, line width = 1.5](L)
\tkzDrawPoints[shape = cross out, scale = 1, line width = 1.5](Q)
\tkzDrawPoints[shape = cross out, scale = 1, line width = 1.5](S)
\tkzDrawPoints[shape = circle, scale = 1, line width = 1.5](T)
\tkzDrawPoints[shape = circle, scale = 1, line width = 1.5](O)
\tkzDrawPoints[shape = circle, scale = 1, line width = 1.5](W)

% \tkzInterLC(T,O)(O,R) \tkzGetPoints{I}{E} 						% Intersection Point on Circle
\tkzDrawPoints[shape = circle, scale = 1, line width = 1.5](I) 	%
\tkzDrawPoints[shape = cross, scale = 1](C) 					% Interception Point

% Draw Arcs and Circles
\tkzDrawArc[line width=3,color=green!50!black, opacity=0.5, delta=5](C,L)(I)
\tkzDrawCircle[dashed,line width=0.8](M,L)
\tkzDrawCircle[dashed,line width=0.8](Q,S)
\tkzDrawCircle[dashed,line width=0.8](O,R)

\tkzDefPointWith[colinear=at O](L,timeR)
\tkzGetPoint{timeR1}
\tkzDefPointWith[colinear=at M](L,timeR)
\tkzGetPoint{timeR2}
\tkzDefPointWith[colinear=at Q](L,timeR)
\tkzGetPoint{timeR3}

\tkzDefPointWith[colinear=at S](L,timeL)
\tkzGetPoint{timeL3}
\tkzDefPointWith[colinear=at T](L,timeL)
\tkzGetPoint{timeL2}
\tkzDefPointWith[colinear=at L](L,timeL)
\tkzGetPoint{timeL1}
\tkzDefPointWith[colinear=at T](L,xhatT)
\tkzGetPoint{xhat0}

% Draw the lines
\tkzDrawSegments[-,line width=0.2](L,timeL)
\tkzDrawSegments[-,line width=0.2](S,timeL3)
\tkzDrawSegments[-,line width=0.2](T,timeL2)
\tkzDrawSegments[-,line width=0.2](Q,timeR3)
\tkzDrawSegments[-,line width=0.2](M,timeR2)
\tkzDrawSegments[-,line width=0.2](O,timeR1)

\tkzDrawSegments[color=iltblue,line width=10,opacity=0.3](T,L)
\tkzDrawSegments[color=iltblue,line width=10,opacity=0.3](O,M)
\tkzDrawSegments[line width=1](T,L)
\tkzDrawSegments[line width=1](O,M)
\tkzDrawSegments[line width=10,color=magenta,opacity=0.2](M,Q)
\tkzDrawSegments[line width=10,color=magenta,opacity=0.2](L,S)
\tkzDrawSegments[line width=1](M,Q)
\tkzDrawSegments[line width=1](L,S)
\tkzDrawSegments[dashed,line width=1](M,S)
\tkzDrawSegments[dashed,line width=1](M,L)
\tkzDrawSegments[dashed,line width=1](I,O)
\tkzDrawSegments[line width=0.3](T,O)
\tkzDrawSegments[line width=0.3](I,L)

% Draw Vectors
\tkzDrawSegments[->,>=latex,line width=2.0](O,v_{O,1})
\tkzDrawSegments[->,>=latex,line width=2.0](M,v_{O,2})
\tkzDrawSegments[->,>=latex,line width=2.0](L,v_{T,2})
\tkzDrawSegments[->,>=latex,line width=2.0](T,v_{T,1})
\tkzDrawSegments[->,>=latex,line width=0.5](L,xhatT)
\tkzDrawSegments[->,>=latex,line width=0.5](T,xhat0)
\tkzDrawSegments[->,>=latex,line width=0.5](S,yhat)

% Label the Points
\tkzLabelPoints[shift = {(-0.3,-0.1)}](M)
\tkzLabelPoints[](Q)
\tkzLabelPoints[below left, shift = {(-0.1,0)}](L)
\tkzLabelPoints[shift = {(0.1,0)}](S)
\tkzLabelPoints[above left](T)
\tkzLabelPoints[](O)
\tkzLabelPoints[below, shift = {(0.0,-0.2)}](I)
\tkzLabelPoints[](W)
\tkzLabelPoints[above right](C)

\draw($(I)!0.44!(O)$) node [above, scale = 1, shift={(-0.1,0.1)}]{$R$};
\draw(timeL1) node [left, scale = 1, shift={(0,0)}]{$t_2$}; 		% Left Time T
\draw(timeL2) node [left, scale = 1, shift={(0,0)}]{$t_1$}; 		% Left Time L
\draw(timeL3) node [left, scale = 1, shift={(0,0)}]{$t_f$}; 		% Left Time S
\draw(timeR1) node [right, scale = 1, shift={(0,0)}]{$t_1$}; 		% Right Time Label O
\draw(timeR2) node [right, scale = 1, shift={(0,0)}]{$t_2$};  		% Right Time Label M
\draw(timeR3) node [right, scale = 1, shift={(0,0)}]{$t_f$};		% Right Time Label Q
\draw(T) node [right, scale = 1, shift={(0.7,3.4)}]{Apol$(T,I)$}; 	% Apollonius Circle

\draw(xhatT) node [right]{$\hat{x}$};
\draw(xhat0) node [right]{$\hat{x}$};
\draw(yhat) node [below right]{$\hat{y}$};

% Angle Marking
\tkzMarkAngle[size = 0.3, color = black,vect, mark=0, line width=0.7](timeR2,M,S)
\tkzLabelAngle[pos = 0.45, scale=1, shift={(0.1,0)} ](xhat,M,v_{O,2}){$\psi_{O,2}$}

\tkzMarkRightAngle[scale = 0.7](xhatT,L,v_{T,2})
\tkzMarkAngle[size = 0.5, color = black,vect, mark=0, line width=0.7](xhatT,L,M)
\tkzMarkAngle[size = 0.5, color = black,vect, mark=0, line width=0.7](M,L,v_{T,2})

\tkzLabelAngle[pos = 0.7, scale=1, shift={(0.0,-0.1)} ](xhatT,L,M){$\theta_{T,2}$}
\tkzLabelAngle[pos = 0.7, scale=1, shift={(0.0,-0.1)} ](M,L,v_{T,2}){$\lambda_{TO,2}$}

\tkzMarkAngle[size = 0.3, color = black,vect, mark=0, line width=0.7](timeR1,O,v_{O,1})
\tkzLabelAngle[pos = 0.45, scale=1, shift={(0,0)} ](timeR1,O,v_{O,1}){$\psi_{O,1}$}

% Label the angles at the initial target position
\tkzMarkRightAngle[scale = 0.7](xhat0,T,v_{T,1})
\tkzMarkAngle[size = 0.5, color = black,vect, mark=0, line width=0.7](xhat0,T,O)
\tkzMarkAngle[size = 0.5, color = black,vect, mark=0, line width=0.7](O,T,v_{T,1})
\tkzLabelAngle[pos = 0.7, scale=1, shift={(0.0,-0.1)} ](xhat0,T,O){$\theta_{T,1}$}
\tkzLabelAngle[pos = 0.7, scale=1, shift={(0.0,-0.1)} ](O,T,v_{T,1}){$\lambda_{TO,1}$}

\tkzMarkAngle[size = 1.5, color = black,vect, mark=0, line width=0.7](L,S,M)
\tkzLabelAngle[pos = 1.6, scale=1](L,S,M){$\omega$}

\tkzDefMidPoint(L,S) 
\tkzGetPoint{F}
\tkzDrawPoints[shape = circle, scale = 1, line width = 1.5](F)

\tkzDefMidPoint(M,Q) 
\tkzGetPoint{H}
\tkzDrawPoints[shape = circle, scale = 1, line width = 1.5](H)
\tkzDrawSegment[line width=0.5](H,F)
\draw($(H)!0.5!(F)$) node [below, scale = 1, shift={(0,0)}]{$\rho$};
\tkzDefPointWith[colinear=at H](L,timeR)
\tkzGetPoint{timeR4}
\tkzDefPointWith[colinear=at F](L,timeL)
\tkzGetPoint{timeL4}

\draw(timeR4) node [right, scale = 1, shift={(0,0)}]{$t\in[t_2,t_f]$}; 
\tkzLabelPoints[below left](F)
\tkzLabelPoints[above right](H)

\tkzDrawSegments[-,line width=0.2](H,timeR4)

\end{tikzpicture}
		\caption{In the optimal observation maneuver entails two phases: 1) approach and 2) observation. In the approach phase (blue) the observer aims to bring the target within range. In the observation phase (pink) the observer aims to keep the fast target in range for as long as possible.}
		\label{fig:overallGeometry}
	\end{figure} 
	The target is said to be \emph{observed} when it is within a range, $R\; (>0)$, of the observer. The observation set, $\mathscr{O}$, is modeled as a disk of radius, $R$,
	\begin{equation}
		\mathscr{O} = \lbrace \mathbf{x}  (t) \ | \ x_O^2(t) +  (y_O(t) - y_T(t))^2 - R^2 \leq 0, t \in [t_2,t_f] \rbrace.
		\label{eq:observation}
	\end{equation}
	The positions of $O$ and $T$ in the reference frame are specified by the Cartesian coordinates $(x_O(t),y_O(t))$ and $(0,y_T(t))$, respectively. The observer's control variable is its instantaneous heading angle $u(t) = \psi_O(t)$. The explicit time dependence of the states, costates, and control will henceforth be suppressed. The dynamics for this scenario are
	\begin{equation}
		\dot{x}_O = \alpha \cos \psi_O, \quad \dot{y}_O = \alpha \sin \psi_O, \quad \dot{y}_T = 1.
		\label{eq:dynamics}
	\end{equation}
	Since the target is faster than the observer, escape of the target from the observer is unavoidable. Escape occurs when $T$ is no longer within the range, $R$, of $O$; this determines the final time, $t_f$. Thus, the terminal manifold is
	\begin{equation}
		\mathscr{C} = \lbrace \mathbf{x} \;|\; R^2 - x_O^2 - (y_O-y_T)^2 < 0  ,  t \geq t_2\rbrace.
		\label{eq:Escape}
	\end{equation}
	The objective of the observer is to maximize the time for which the target remains inside its observation disk,
	\begin{equation}
		\textstyle \psi_{O}^*(t) = \argmin_{\psi_{O}} J = \int_{t_2}^{t_f} - 1 \mathrm{d}t = t_2-t_f.
		\label{eq:ObjectiveCost}
	\end{equation}
	The cost functional in \cref{eq:ObjectiveCost} states that the observer aims to choose headings that maximize the observation time in the second phase.
	The observation time in Phase-II inherently depends on the observer's trajectory in Phase-I.
	In order to model the observer's objectives for Phase-I and Phase-II, specify the approach time as $\tapr \triangleq t_2-t_1$ and the observation time $\tobs \triangleq t_f-t_2$. These time intervals pertain to Phase I and Phase-II, respectively. The two phases occur in succession, and therefore: $t_f = \tapr + \tobs$, where $t_1$, being the instant of initiation of the engagement, is set to $t_1=0$. The analysis starts with Phase-II or ``end game''.
	
	% \textcolor{red}{A new section could be started here}
	
	\section{Phase - II: Observation Phase} \label{sec:ObservationPhase}
	First, consider the observation phase wherein the observer aims to maximize the time that the target remains inside the observation disk. The Hamiltonian for Phase-II is 
	\begin{equation}
		\mathscr{H}_{\text{II}} = p_{x_O} \alpha \cos \psi_{O,2} + p_{y_O} \alpha \sin \psi_{O,2}  + p_{y_T}
		\label{eq:Hamiltonian1}
	\end{equation}
	and the costates are
	\begin{equation}
		\mathbf{p} = [p_{x_O} \; p_{y_O} \; p_{y_T}]^\intercal .
		\label{eq:costates}
	\end{equation}
	The Pontraygin Minimum Principle (PMP) yields necessary conditions for optimality.
	\begin{align}
		\dot{\mathbf{x}}^*(t) &= \tfrac{\partial \mathscr{H}(\mathbf{x}^*(t),\mathbf{p}(t),\psi_{O,2}^*(t),t)}{\partial \mathbf{p}}, \label{eq:NOCx} \\ 
		\dot{\mathbf{p}}(t) &= -\tfrac{\partial \mathscr{H}(\mathbf{x}^*(t),\mathbf{p}(t),\psi_{O,2}^*(t),t)}{\partial \mathbf{x}},\label{eq:NOCy} \\ 
		\mathbf{0} &= \tfrac{\partial \mathscr{H}(\mathbf{x}^*(t),\mathbf{p}(t),\psi_{O,2}^*(t),t)}{\partial \psi_{O,2}}.
		\label{eq:NOCu}
	\end{align}
	and $\mathscr{H}_{\text{II}}(t_f)=0$. The superscript, $*$ represents optimality. Evaluating the stationarity condition specified in \cref{eq:NOCu},
	\begin{equation}
		\begin{aligned}
			%0 &= \tfrac{\partial \mathscr{H}_{\text{II}}}{\partial \psi_{O,2}} = \alpha p_{y_O} \cos \psi^*_{O,2} - \alpha p_{x_O} \sin \psi^*_{O,2}, \\ 
			0  &= p_{y_O} \cos \psi_{O,2}^* - p_{x_O} \sin \psi_{O,2}^*.
		\end{aligned}
		\label{eq:NOCu1}
	\end{equation}
	Evaluating the necessary conditions in \cref{eq:NOCy}, the costates are found to be constant, as expected:
	\begin{equation}
		\dot{p}_{x_O}(t) = 0,\quad
		\dot{p}_{y_O}(t) = 0,\quad
		\dot{p}_{x_T}(t) = 0 \label{eq:constLambda}.
	\end{equation}
	Solving for the optimal observer's heading $\psi_{O,2}^*$ using \cref{eq:NOCu1}, the following is obtained:
	\begin{equation}
		\begin{aligned}
			\sin \psi_{O,2}^* = \tfrac{p_{y_O}}{\sqrt{p_{x_O}^{2}+p_{y_O}^{2}}}, \quad \cos \psi_{O,2}^* = \tfrac{p_{x_O}}{\sqrt{p_{x_O}^{2}+p_{y_O}^{2}}}
		\end{aligned}
		\label{eq:optimalControl1}
	\end{equation}
	From \cref{eq:optimalControl1,eq:constLambda} the costates are constant therefore the optimal heading of the observer is constant. 
	
	Next, consider the transversality conditions which are used to formulate the relationship between the states and costates at final time, $t_f$.
	\begin{equation}
		\tfrac{\partial h}{\partial \mathbf{x}}(\mathbf{x}^*(t_f),t_f) - \mathbf{p}(t_f) = \sigma \tfrac{\partial m}{\partial \mathbf{x}}(\mathbf{x}^*(t_f),t_f);
		\label{eq:transversality}
	\end{equation} 
	where $h(\cdot)$ is the terminal cost of the object functional, $\sigma$ is a slack variable, and $m$ is the terminal manifold, \cref{eq:Escape};% Because final time occurs when the target exists the observation range, the final state lies on the terminal manifold:
	\begin{equation}
		m(\mathbf{x}(t_f),t_f) = x_O^{*2}(t_f) + (y_O^*(t_f) - y_T^*(t_f))^2 - R^2 = 0.
	\end{equation}
	Evaluation of the transversality conditions in \cref{eq:transversality}, 
	\begin{equation}
		-\mathbf{p}^*(t_f) = \sigma \begin{bmatrix}
			\tfrac{\partial m}{\partial x_O}&\tfrac{\partial m}{\partial y_O}&\tfrac{\partial m}{\partial y_T}	
		\end{bmatrix}^\transpose_{t=t_f}.
	\end{equation}
	Therefore,
	\begin{equation}
		\begin{aligned}
			p_{x_O}^*(t_f) &= -\sigma \tfrac{\partial m}{\partial x_O}\big\vert_{t=t_f} = -2\sigma x_O(t_f),\\
			p_{y_O}^*(t_f) &= -\sigma \tfrac{\partial m}{\partial y_O}\big\vert_{t=t_f} = 2\sigma (y_T(t_f)-y_O(t_f)),\\
			%		p_{x_T}^* = \tfrac{\partial m}{\partial x_T} = 2d(x_O-x_T)\\
			p_{y_T}^*(t_f) &= -\sigma \tfrac{\partial m}{\partial y_T}\big\vert_{t=t_f} = 2\sigma (y_O(t_f)-y_T(t_f))
		\end{aligned}.
		\label{eq:costatePhase2}
	\end{equation}
	Substitution of $p_{x_O}$ and $p_{y_O}$ from \cref{eq:costatePhase2} into the optimal heading obtained in \cref{eq:optimalControl1}, the optimal heading for the second phase is
	\begin{equation}
		\sin \psi_{O,2}^*(t_f) = \tfrac{\pm (y_T(t_f)-y_O(t_f))}{R}.
		\label{eq:optimalHeading2}
	\end{equation}
	When the sign of \cref{eq:optimalHeading2} is $+$, the observer is heading toward the target and when the sign is $-$, the observer is headed away from the target, which is not a case of interest. The observer needs to have a positive component of velocity along the target's direction for it to be a viable optimal path. Because of \cref{eq:optimalHeading2}, it is observed that $\overrightarrow{MQ}$ and $\overrightarrow{QS}$ are collinear. Therefore, one can make use of $\triangle LMS$ for analysis in Phase-II. More details concerning Phase-II are in an earlier work~\cite{weintraub2020maximum}, and for the 3-D case, ~\cite{weintraub2021maximum}.
	
	% \subsection{Optimal Observer Heading, Phase-II} \label{sec:HeadingII}
	\begin{lemma}
		The optimal heading of the observer that maximizes observation time for Phase-II is $\psi_{O,2}^* = \cos^{-1} \left( \tfrac{(\alpha^2-1)\sin \lambda_{TO,2}}{\alpha^2 + 2 \alpha \cos \lambda_{TO,2} + 1} \right)$, where $\alpha \in (0,1)$ is the speed ratio between the observer and the target and $\lambda_{TO,2} \in [-\pi, \pi]$ is the relative bearing from the target to the observer.
		\label{lma:OptimalHeadingII}
	\end{lemma}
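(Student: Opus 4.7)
The plan is to leverage three facts already established in \Cref{sec:ObservationPhase}: (i) along the optimal Phase-II trajectory the costates are constant, so $\psi_{O,2}^*$ is constant; (ii) the transversality condition \cref{eq:optimalHeading2} says that at $t_f$ the observer's velocity vector is aligned with the line-of-sight from observer to target, i.e., the observer's heading points directly at the target at the instant of escape; and (iii) by (i)--(ii) the entire Phase-II trajectory of the observer is a straight line whose forward extension passes through the target's escape position (the collinearity of $\overrightarrow{MQ}$ and $\overrightarrow{QS}$ noted after \cref{eq:optimalHeading2}). Consequently, the observer moves on a straight segment that, when extended by a segment of length $R$, reaches the target's position at $t_f$.

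First I would set up the triangle $\triangle LMS$ with $L=(0,y_T(t_2))$ the target's position at the start of Phase-II, $M=(x_O(t_2),y_O(t_2))$ the observer's Phase-II starting point, and $S=(0,y_T(t_f))$ the target's escape position. Three side lengths follow immediately from the dynamics \cref{eq:dynamics} and the entry condition $|LM|=R$: namely $|LM|=R$, $|LS|=\tobs$ (the target moves at unit speed along $+y$), and $|MS|=\alpha\tobs+R$ (the observer's straight segment $|MN|=\alpha\tobs$ plus the terminal line-of-sight $|NS|=R$, with $N$ the observer's position at $t_f$, which are collinear by fact (iii)). The interior angle at $L$ is exactly $\lambda_{TO,2}$, since that bearing is measured from the target's velocity direction ($+y$) to the observer.

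Next, I would apply the law of cosines at vertex $L$,
\begin{equation*}
(\alpha\tobs+R)^2 \;=\; R^2 + \tobs^2 - 2R\,\tobs\cos\lambda_{TO,2},
\end{equation*}
which reduces to a linear equation in $\tobs$ and yields $\tobs = 2R(\alpha+\cos\lambda_{TO,2})/(1-\alpha^2)$, whence
\begin{equation*}
|MS| \;=\; \alpha\tobs+R \;=\; \frac{R\bigl(\alpha^2+2\alpha\cos\lambda_{TO,2}+1\bigr)}{1-\alpha^2}.
\end{equation*}
Finally, parameterizing $M=(R\sin\lambda_{TO,2},\,y_T(t_2)+R\cos\lambda_{TO,2})$ and reading off the $x$-component of $\overrightarrow{MS}$, one gets $\cos\psi_{O,2}^* = -R\sin\lambda_{TO,2}/|MS|$, which after substituting the expression for $|MS|$ collapses exactly to the claimed $(\alpha^2-1)\sin\lambda_{TO,2}\big/\bigl(\alpha^2+2\alpha\cos\lambda_{TO,2}+1\bigr)$.

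The main obstacle I anticipate is not algebraic but \emph{conventional}: one must justify picking the $+$ root in \cref{eq:optimalHeading2} (the observer must close on the target, not flee), fix the sign convention on $\lambda_{TO,2}\in[-\pi,\pi]$ so that the law-of-cosines geometry and the coordinate parameterization of $M$ agree globally, and verify that $\tobs>0$ corresponds precisely to the engageability regime $\cos\lambda_{TO,2}>-\alpha$; outside this regime the triangle $\triangle LMS$ degenerates and Phase-II is trivial. Once the sign bookkeeping is settled, the rest is the routine trigonometric simplification above.
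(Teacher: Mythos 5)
Your proposal is correct and follows essentially the same route as the paper: the law of cosines in the triangle $\triangle LMS$ with the side relation $\overline{MS} = \alpha\,\tobs + R$ (the paper's \cref{eq:MS2,eq:MS}), solving the resulting linear equation for $\overline{LS}=\tobs$, and then extracting $\cos\psi_{O,2}^*$ as the ratio of the horizontal leg $R\sin\lambda_{TO,2}$ to $\overline{MS}$ — your explicit coordinate parameterization of $M$ is just a more careful rendering of the paper's ``adjacent over hypotenuse'' step in \cref{eq:LS2}. Your added bookkeeping on the $+$ root of \cref{eq:optimalHeading2}, the sign convention for $\lambda_{TO,2}$, and the degeneracy when $\cos\lambda_{TO,2}\le-\alpha$ addresses details the paper leaves implicit (the last being its \Cref{lma:ZeroTime}).
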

	\begin{proof}
		Using the law of cosines for the triangle $\triangle MSL$ from \cref{fig:overallGeometry}:
		\begin{equation}
			\overline{MS}^2 = \overline{LS}^2 + R^2 - 2R \overline{LS}\cos \lambda_{TO,2}.
			\label{eq:MS2}
		\end{equation}
		From the speed ratio:
		\begin{equation}
			\overline{MS} = \alpha \overline{LS} + R.
			\label{eq:MS}
		\end{equation}
		Substitution of \cref{eq:MS} into \cref{eq:MS2}, and solving for $\overline{LS}$, the following is obtained:
		% \begin{equation}
			%     \left( \alpha \overline{LS} + R \right)^2 = \overline{LS}^2 + R^2 - 2R \overline{LS}\cos \lambda_{TO,2}.
			%     \label{eq:MSproof1}
			% \end{equation}
		%Expanding \cref{eq:MSproof1} and canceling like terms,
		%\begin{equation}
		%    \alpha^2 \overline{LS} + 2\alpha R  = \overline{LS} - 2R \cos \lambda_{TO,2}.
		%    \label{eq:MSproof2}
		%\end{equation}
		% Solving \cref{eq:MSproof1} for $\overline{LS}$,
		\begin{equation}
			\overline{LS} = \tfrac{2 R (\alpha + \cos \lambda_{TO,2})}{1 - \alpha^2}
			\label{eq:LS1}
		\end{equation}
		Since the cosine of an angle is the adjacent distance over the hypotenuse, the following is obtained:
		\begin{equation}
			\cos(\pi - \psi_{O,2}) = \tfrac{R \sin \lambda_{TO,2}}{\alpha \overline{LS} + R}
			\label{eq:LS2}
		\end{equation}
		Inserting \cref{eq:LS1} into \cref{eq:LS2}, 
		\begin{equation}
			- \cos \psi_{O,2} = \tfrac{R \sin \lambda_{TO,2}}{\alpha \tfrac{2 R (\alpha + \cos \lambda_{TO,2})}{1 - \alpha^2} + R}.
			\label{eq:psiO21}
		\end{equation}
		Through algebraic manipulation of \cref{eq:psiO21} one finally obtains the optimal observer heading for the second phase:
		\begin{equation}
			\psi_{O,2}^* = \cos^{-1} \begin{pmatrix} \tfrac{(\alpha^2-1)\sin\lambda_{TO,2}}{\alpha^2 + 2 \alpha \cos \lambda_{TO,2} + 1}
			\end{pmatrix}
			\label{eq:solution_phase2}
		\end{equation}
	\end{proof}
	
	\begin{lemma}
		Once the target is within the range of the observer (under optimal play), the target remains inside the observation disk until the state, $\mathbf{x}(t)$, reaches the terminal manifold, $\mathscr{C}$ at time, $t_f$ -- observation is invariant.
		\label{lma:Invariance}
	\end{lemma}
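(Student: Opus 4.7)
The plan is to exploit the fact, already established in the derivation preceding this lemma, that the optimal heading $\psi_{O,2}^*$ is constant throughout Phase-II (since the costates are constant and \cref{eq:optimalControl1} yields a constant heading in terms of them). Under this constant heading, the observer travels in a straight line at speed $\alpha$, while the target travels in a straight line at unit speed, so the relative position vector $\mathbf{r}(t) := (x_O(t),\, y_O(t)-y_T(t))$ is an affine function of $t$ on $[t_2,t_f]$. Consequently, the squared range
\begin{equation*}
    d^2(t) := x_O^2(t) + (y_O(t)-y_T(t))^2 = \|\mathbf{r}(t_2)\|^2 + 2\,\mathbf{r}(t_2)^\transpose \mathbf{v}_{\text{rel}}\,(t-t_2) + \|\mathbf{v}_{\text{rel}}\|^2 (t-t_2)^2
\end{equation*}
is a quadratic polynomial in $t$ with leading coefficient $\|\mathbf{v}_{\text{rel}}\|^2 = \alpha^2 - 2\alpha \sin\psi_{O,2}^* + 1$.

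The next step is to verify that this leading coefficient is strictly positive, so that $d^2(t)$ is a \emph{strictly convex} quadratic. Because $\alpha\in(0,1)$, the minimum of $\alpha^2-2\alpha\sin\psi_{O,2}^*+1$ over admissible headings is $(1-\alpha)^2>0$, which rules out the degenerate case of parallel, equal-speed motion. Strict convexity then implies that the sub-level set $\{t : d^2(t) \leq R^2\}$ is a (possibly empty, possibly unbounded at one end) closed interval $[t_a, t_b]$, and in particular $d^2(t)$ crosses the level $R^2$ at most twice.

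With this structure in hand, the invariance claim is essentially immediate. At time $t_2$ the target has just entered the disk, so $d^2(t_2)\leq R^2$; at time $t_f$ the terminal manifold $\mathscr{C}$ is reached, so $d^2(t_f)= R^2$. By convexity the entire interval $[t_2,t_f]$ is contained in the sub-level set where $d^2 \leq R^2$, which is exactly the statement that the target remains inside the observation disk for all $t\in[t_2,t_f]$. Moreover, since $d^2$ is strictly convex, it cannot dip back below $R^2$ after $t_f$, so no re-entry is possible --- the observation interval is a single contiguous window.

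The main obstacle I anticipate is less mathematical than expositional: one must be careful to invoke the constancy of $\psi_{O,2}^*$ from the PMP analysis already carried out, rather than re-deriving it, and to handle cleanly the edge case where Phase-I is trivial (i.e., $t_2 = t_1 = 0$ and the target starts inside the disk), in which case the same convex-quadratic argument applies with the inequality $d^2(t_2)< R^2$ in place of equality. Neither the positivity of $\|\mathbf{v}_{\text{rel}}\|^2$ nor the convexity step requires any new computation beyond what has already been established.
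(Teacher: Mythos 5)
Your proof is correct, but it takes a genuinely different route from the paper's. The paper argues geometrically: using the collinearity established just before the lemma, it applies the law of cosines twice --- once to $\triangle MSL$ to extract $\cos\omega$, and once to $\triangle FHS$ at an intermediate time --- together with the speed-ratio relations $\overline{MS}=\alpha\overline{LS}+R$ and $\overline{HS}=\alpha\overline{FS}$, arriving at the explicit identity
\begin{equation*}
	\rho^2 = R^2 + \tfrac{\overline{FS}\,R}{\alpha\overline{LS}+R}\left(\overline{FS}-\overline{LS}\right)\left(1-\alpha^2\right),
\end{equation*}
whose sign structure (positive $\times$ negative $\times$ positive) gives $\rho<R$ on $(t_2,t_f)$. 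You instead work in relative coordinates: constant headings make $\mathbf{r}(t)$ affine in $t$, so $d^2(t)$ is a strictly convex quadratic with leading coefficient $\alpha^2-2\alpha\sin\psi_{O,2}^*+1\geq(1-\alpha)^2>0$, and since $d^2\leq R^2$ at both $t_2$ and $t_f$, convexity forces $d^2\leq R^2$ throughout $[t_2,t_f]$, strictly on the open interval. Your argument is more elementary and more general: it never uses the specific optimal value of $\psi_{O,2}^*$, the collinearity observation, or the triangle constructions, and in the target-fixed frame it reduces to the fact that a chord of a circle lies inside the circle --- so it establishes invariance for \emph{any} constant-heading observer whose range is at most $R$ at the two endpoint times, not merely the optimal one. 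What the paper's computation buys in exchange is a quantitative expression for the intermediate range $\rho$ in terms of distances traveled along the trajectory, written in the same Apollonius-style geometry used throughout the rest of the paper. One small slip in your write-up: the sublevel set of a strictly convex quadratic is always a bounded closed interval (possibly empty or a point), so your parenthetical ``possibly unbounded at one end'' cannot occur; this is harmless, since you never rely on it.
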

	
	\begin{proof}
		Using the law of cosines for $\triangle MSL$, the following relationship is obtained:
		\begin{equation} 
			\label{eq:invar1}
			R^2 = \overline{LS}^2 + (\alpha \overline{LS} + R)^2 - 2 \overline{LS} (\alpha \overline{LS} + R) \cos \omega.
		\end{equation}
		Solving \cref{eq:invar1} for $\cos \omega$ the following is obtained:
		\begin{equation}
			\cos \omega = \tfrac{\overline{LS} (1 + \alpha) + 2 \alpha R}{2 (\alpha \overline{LS} + R)}
			\label{eq:omega1}
		\end{equation}
		Now, consider a future time $t\in (t_2,t_f)$. Using the law of cosines for $\triangle FHS$:
		\begin{equation}
			\rho^2 = \overline{FS}^2 + \overline{HS}^2 - 2\overline{FS}\overline{HS} \cos \omega.
			\label{eq:rho1}
		\end{equation}
		Recognizing that $\overline{HS} = \alpha \overline{FS}$ and $\overline{HS} = \overline{HQ} + R$. 
		\begin{equation}
			\rho^2 = \overline{FS}^2 + (\alpha \overline{FS} + R)^2 - 2 \overline{FS}(\alpha \overline{FS} + R) \cos \omega
			\label{eq:rho2}
		\end{equation}
		Substituting \cref{eq:omega1} into \cref{eq:rho2}, the following is obtained
		\begin{equation}
			\rho^2 = R^2 + \underbrace{\tfrac{\overline{FS}R}{(\alpha \overline{LS} + R)}}_{\text{Positive}}\underbrace{(\overline{FS} - \overline{LS})}_{\text{Negative}}\underbrace{(1 - \alpha^2)}_{\text{Positive}}.
		\end{equation}
		Therefore, the values of $t \in (t_2, t_f), \rho < R$. 
	\end{proof}
	The function that describes the target distance while being observed is given in \cref{eq:LS1}. The target is moving with unity speed and therefore the observation time is 
	\begin{equation}
		\tobs = t_f-t_2 = \tfrac{2R(\alpha + \cos \lambda_{TO,2})}{1 - \alpha^2}.
		\label{eq:observationTime}
	\end{equation}
	\begin{lemma}
		The observation time monotonically increases over the interval $\lambda_{TO,2} \in [-\pi,0]$ and monotonically decreases over the interval $\lambda_{TO,2} \in [0,\pi]$ with a maximum at $\lambda_{TO,2} = 0$.
		\label{lma:Observation}
	\end{lemma}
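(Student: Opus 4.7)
The plan is to prove the statement by straightforward single-variable calculus on the closed-form expression for $\tobs$ given in \cref{eq:observationTime}. Since $R$, $\alpha$ are fixed parameters with $R>0$ and $\alpha\in(0,1)$, the denominator $1-\alpha^2$ is a strictly positive constant, and the only dependence on $\lambda_{TO,2}$ sits in the numerator term $\cos\lambda_{TO,2}$.

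First I would differentiate \cref{eq:observationTime} with respect to $\lambda_{TO,2}$, yielding
\begin{equation}
\tfrac{d \tobs}{d \lambda_{TO,2}} = \tfrac{-2R\sin\lambda_{TO,2}}{1-\alpha^2}.
\end{equation}
Because the prefactor $\tfrac{2R}{1-\alpha^2}$ is strictly positive, the sign of the derivative is opposite to the sign of $\sin\lambda_{TO,2}$. On $\lambda_{TO,2}\in[-\pi,0]$ one has $\sin\lambda_{TO,2}\le 0$, so the derivative is nonnegative and $\tobs$ is monotonically increasing; on $\lambda_{TO,2}\in[0,\pi]$ one has $\sin\lambda_{TO,2}\ge 0$, so the derivative is nonpositive and $\tobs$ is monotonically decreasing. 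The two endpoints of these intervals meet at $\lambda_{TO,2}=0$, where the derivative vanishes; combining monotonicity on the two halves identifies $\lambda_{TO,2}=0$ as the (unique) global maximum on $[-\pi,\pi]$, with value $\tobs = \tfrac{2R(\alpha+1)}{1-\alpha^2} = \tfrac{2R}{1-\alpha}$.

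There is no real obstacle here — the lemma is essentially a sanity check confirming that the most favorable geometry for the end game is a head-on engagement in which the target is observed coming straight at the observer. The only minor point worth noting is that the expression \cref{eq:observationTime} was derived under the geometric assumptions of Phase-II (in particular, that the observer selects the $+$ branch in \cref{eq:optimalHeading2} so it has a positive velocity component toward the target); I would briefly remark that this is the regime in which \cref{eq:observationTime} is physically meaningful, so the monotonicity conclusion applies to the admissible range of relative bearings.
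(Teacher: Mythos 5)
Your proposal is correct and follows essentially the same route as the paper's proof: differentiate \cref{eq:observationTime} with respect to $\lambda_{TO,2}$, observe the derivative is $-\tfrac{2R}{1-\alpha^2}\sin\lambda_{TO,2}$, and read off the sign on $[-\pi,0]$ and $[0,\pi]$ to get monotonicity and the maximum at $\lambda_{TO,2}=0$. In fact your derivative fixes a small typo in the paper, which writes the prefactor as $\tfrac{2R}{1-\alpha}$ in \cref{eq:partialY}; this does not affect the sign analysis or the conclusion in either version.
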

	
	\begin{proof}
		The observation time is as given in \cref{eq:observationTime}. % is as follows:
		% \begin{equation*}
			%     \tobs = \tfrac{2R(\alpha + \cos \lambda_{TO,2})}{1 - \alpha^2} %= \tfrac{2R(\alpha + \cos |\lambda_{TO,2}|)}{1 - \alpha^2}
			% \end{equation*}
		Taking the partial derivative of the observation time, $t_\text{obs}$, with respect to $\lambda_{TO,2}$ and setting equal to zero provides candidate extremals for $\lambda_{TO,2} \in [-\pi,\pi]$.
		\begin{equation}
			\tfrac{\partial \tobs}{\partial \lambda_{TO,2}} = 0 \Rightarrow -\tfrac{2R}{1-\alpha}\sin \lambda_{TO,2} = 0
			\label{eq:partialY}
		\end{equation}
		Therefore the observation time is a maximum when $\lambda_{TO,2} = 2 \pi n, n \in \mathbb{Z}$. The only candidate in the domain of $\lambda_{TO,2} \in [-\pi,\pi]$ is   
		\begin{equation}
			\lambda_{TO,2} = 0.
			\label{eq:observationZero}
		\end{equation}
		The speed ratio $\alpha \in (0,1)$ and $R>0$. The observation time is a maximum when $\lambda_{TO,2}=0$ and monotonically decreases from $0$ over the over the domain $\lambda_{TO,2} \in [-\pi,\pi]$. Further, investigating the sign of the partial in \cref{eq:partialY}:
		\begin{align}
			- \tfrac{2R}{1-\alpha}\sin \lambda_{TO,2} &> 0 , \lambda_{TO,2} \in [-\pi,0), \label{eq:observationPositive}\\
			- \tfrac{2R}{1-\alpha}\sin \lambda_{TO,2} &< 0 , \lambda_{TO,2} \in (0,\pi].
			\label{eq:observationNegative}
		\end{align}
		From \cref{eq:observationPositive}, observation time monotonically increases over the interval $\lambda_{TO,2} \in [-\pi,0]$. From \cref{eq:observationNegative}, observation time monotonically decreases over the interval $\lambda_{TO,2} \in [0,\pi]$. Lastly, from, \cref{eq:observationZero} the maximum observation time occurs when $\lambda_{TO,2} = 0$.
	\end{proof}
	
	\begin{lemma}
		Observation time is zero for Phase-II when $\lambda_{TO,2} \in [\cos^{-1} (-\alpha), \pi] \cup [-\pi,-\cos^{-1}(-\alpha)]$.
		\label{lma:ZeroTime}
	\end{lemma}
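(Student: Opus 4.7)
The plan is to proceed directly from the closed-form expression for observation time derived just above, namely $\tobs = \tfrac{2R(\alpha + \cos \lambda_{TO,2})}{1-\alpha^2}$ in \cref{eq:observationTime}. Since $R > 0$ and, by hypothesis, $\alpha \in (0,1)$ so that $1 - \alpha^2 > 0$, the sign of $\tobs$ is controlled entirely by the sign of the factor $\alpha + \cos \lambda_{TO,2}$. Observation time is physically non-negative, so the algebraic value being non-positive corresponds to $\tobs = 0$ (no positive-duration observation is attainable in Phase-II from that relative bearing).

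Next, I would solve the boundary condition $\alpha + \cos \lambda_{TO,2} = 0$, i.e., $\cos \lambda_{TO,2} = -\alpha$. Because $-\alpha \in (-1,0)$, the value $\cos^{-1}(-\alpha)$ lies in $(\pi/2, \pi)$ and is well defined. Using that $\cos$ is even and strictly decreasing on $[0,\pi]$, the inequality $\cos \lambda_{TO,2} \leq -\alpha$ on the principal domain $\lambda_{TO,2} \in [-\pi,\pi]$ is equivalent to $|\lambda_{TO,2}| \geq \cos^{-1}(-\alpha)$, which is exactly the set
\begin{equation*}
\lambda_{TO,2} \in [\cos^{-1}(-\alpha),\pi] \cup [-\pi,-\cos^{-1}(-\alpha)].
\end{equation*}

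To close the argument I would invoke \Cref{lma:Observation}: since $\tobs$ is monotone on each half of $[-\pi,\pi]$ and vanishes at $\lambda_{TO,2} = \pm \cos^{-1}(-\alpha)$, it is non-positive precisely on the two tail intervals above, and the corresponding physical observation time is therefore $0$.

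The only real subtlety — and the one point I would take care to state explicitly rather than compute — is the interpretation step: the formula \cref{eq:observationTime} was derived assuming the target sits on the observation boundary and the observer plays the optimal constant heading from \Cref{lma:OptimalHeadingII}. When that formula returns a non-positive number, it is not that ``time is negative'' but that no such contact/observation interval exists in Phase-II, so $\tobs = 0$ by definition. Once that interpretation is flagged, the remainder is a one-line trigonometric inequality, so no substantive obstacle is expected.
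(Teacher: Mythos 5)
Your proposal is correct and follows essentially the same route as the paper: both start from the closed-form observation time in \cref{eq:observationTime}, reduce the sign question to $\alpha + \cos\lambda_{TO,2} \leq 0$, and invert the cosine inequality on $[-\pi,\pi]$ to obtain $|\lambda_{TO,2}| \geq \cos^{-1}(-\alpha)$. Your explicit remarks on the monotonicity of $\cos$ on $[0,\pi]$ and on interpreting a non-positive formula value as $\tobs = 0$ merely make rigorous what the paper dispatches with ``re-arranging'' and ``by symmetry.''
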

	
	\begin{proof}
		From \cref{eq:observationTime}, the observation time is zero when $\tobs \leq 0$; namely,
		\begin{equation}
			\tfrac{2R(\alpha + \cos \lambda_{TO,2})}{1-\alpha^2} \leq 0 \Rightarrow \alpha + \cos \lambda_{TO,2} \leq 0
			\label{eq:zeroCondition}
		\end{equation}
		re-arranging \cref{eq:zeroCondition}, the conditions for which observation time is zero occurs when 
		\begin{equation}
			\lambda_{TO,2} \geq \cos^{-1} (-\alpha),
		\end{equation}
		and by symmetry,
		\begin{equation}
			\lambda_{TO,2} \leq -\cos^{-1} (-\alpha).
		\end{equation}
		Since the domain of $\lambda_{TO,2} \in [-\pi,\pi]$, the regions for which observation time is zero occurs when $\lambda_{TO,2} \in [\cos^{-1} (-\alpha), \pi] \cup [-\pi,-\cos^{-1}(-\alpha)]$.
	\end{proof}
	% \begin{proof}
		%     From \cref{eq:observationTime}, observation time for the second phase is
		% \begin{equation*}
			%     \tobs = \tfrac{2R(\alpha + \cos \lambda_{TO,2})}{1 - \alpha^2} = \tfrac{2R\alpha}{1 - \alpha^2} + \tfrac{2R\cos \lambda_{TO,2}}{1 - \alpha^2}.
			% \end{equation*}
		% Observation is explicit in the line-of-sight angle, $\lambda_{TO,2}$, speed ratio, $\alpha$, and observation disk, $R$. The problem parameters $R$ and $\alpha$ are constant. Furthermore, $\cos \lambda_{TO,2}$ monotonically increases from $\lambda_{TO,2} \in [-\pi,0]$ and monotonically decreases from $\lambda_{TO,2} \in [0,\pi]$. 
		
		% Furthermore, since $R>0$, $\alpha \in (0,1)$, and $\lambda_{TO,2}\in[-\pi,\pi]$, the observation time 
		% \end{proof}
	
	\begin{lemma}
		The maximum possible observation time is $\overline{t}_{\text{obs}} = \tfrac{2R}{1-\alpha}$.
		\label{lma:maxPossible}
	\end{lemma}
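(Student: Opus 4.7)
The plan is to combine the closed-form observation-time expression from \cref{eq:observationTime} with the monotonicity result of \cref{lma:Observation}. First I would recall that \cref{lma:Observation} establishes $\lambda_{TO,2}=0$ as the unique maximizer of $\tobs(\lambda_{TO,2})$ on $[-\pi,\pi]$, so the maximum possible observation time is obtained simply by evaluating $\tobs$ at $\lambda_{TO,2}=0$.

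Next I would substitute $\lambda_{TO,2}=0$ into $\tobs = \frac{2R(\alpha+\cos\lambda_{TO,2})}{1-\alpha^2}$ to obtain $\frac{2R(\alpha+1)}{1-\alpha^2}$, and then factor the denominator as $(1-\alpha)(1+\alpha)$ to cancel the $(1+\alpha)$ factor, yielding $\overline{t}_{\text{obs}}=\frac{2R}{1-\alpha}$. Since $\alpha\in(0,1)$ and $R>0$, the quantity is positive and finite, so the expression is well-defined.

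There is essentially no obstacle here; the result is a one-line corollary of the earlier two lemmas. The only subtlety worth flagging is that one should briefly note that $\lambda_{TO,2}=0$ is geometrically achievable (it corresponds to the target being directly behind the observer along the relative bearing, i.e., the observer contacting the rear of the observation disk), so the supremum is actually attained and is therefore the maximum rather than merely a least upper bound.
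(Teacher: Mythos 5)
Your proposal is correct and follows essentially the same route as the paper: invoke \Cref{lma:Observation} to identify $\lambda_{TO,2}=0$ as the maximizer, substitute into \cref{eq:observationTime}, and cancel the factor $(1+\alpha)$ from $1-\alpha^2=(1-\alpha)(1+\alpha)$. Your added remark that $\lambda_{TO,2}=0$ is geometrically attained (the target contacting the bottom of the observation disk) is a reasonable refinement the paper leaves implicit, but it does not change the argument.
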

	\begin{proof}
		From \cref{eq:observationTime}, the observation time is 
		\begin{equation}
			\tobs = \tfrac{2R(\alpha + \cos \lambda_{TO,2})}{1 - \alpha^2}.
			\label{eq:obs2}
		\end{equation}
		From \Cref{lma:Observation}, the maximum observation time occurs when $\lambda_{TO,2} = 0$. Substitution of $\lambda_{TO,2} = 0$ into \cref{eq:observationTime} results in
		\begin{equation}
			\label{eq:maxObservation}
			\overline{t}_{\text{obs}} = \tfrac{2R(\alpha + 1)}{1 - \alpha^2} = \tfrac{2R(\alpha + 1)}{(1 - \alpha)(1 + \alpha)} = \tfrac{2R}{1-\alpha}.
		\end{equation}
	\end{proof}

	% \textcolor{red}{I would consider another section starting here. It would make a good separation between the topic of phase II and Phase I. The other subsections are closely related to Phase I and can remain as subsections}
	
	\section{Phase - I: Approach Phase} \label{sec:ApproachPhase}
	The objective in Phase-I is to place the observer in a favorable position,  to maximize the time of observation in Phase-II. From \Cref{lma:Observation}, more favorable positions are located lower on the observation disk for Phase-II. Much like Phase-II, because the observer is holonomic; the costates of Phase-I are constant; and, therefore, the optimal strategy for Phase-I is a straight-line trajectory. This stems from the Hamiltonian for Phase-I,
	\begin{equation}
		\mathscr{H}_{\text{I}} = p_{x_O} \alpha \cos \psi_{O,1} + p_{y_O}\alpha \sin \psi_{O,1} + p_{y_T}.
	\end{equation}
	Just as shown in \cref{eq:NOCu1,eq:constLambda,eq:optimalControl1}, the optimal heading of the observer for Phase-I is a straight line. Therefore, the selection of the optimal heading $\psi_{O,1}$ that maximizes the overall observation time in Phase-II is of interest -- namely, by \cref{eq:obs2},   %\cref{eq:constLambda} \cref{eq:optimalControl1}
	% The aim of the observer is to take a trajectory in the first phase that maximizes the observation time in the second phase as shown in \cref{eq:obs2}.  
	\begin{equation}
		\begin{aligned}
			\psi_{O,1}^* \ = \ \mathop{\text{argmax}}_{\psi_{O,1}} \tobs \ = \ \mathop{\text{argmax}}_{\psi_{O,1}} \tfrac{2R \alpha + \cos \lambda_{TO,2}}{1-\alpha^2}.
		\end{aligned}
		\label{eq:phase1max}
	\end{equation}
	
	As shown in \Cref{lma:maxPossible}, the maximum observation in the second phase occurs when $\lambda_{TO,2} = 0$. Furthermore, by \Cref{lma:Observation}, the observation time monotonically increases over the interval $\lambda_{TO,2} \in [-\pi,0]$ and decreases over the interval $\lambda_{TO,2} \in [0,\pi]$. In the problem definition, $\alpha$ and $R$ are constant and do not depend upon the heading of the observer. This allows \cref{eq:phase1max} to be separated and re-written in a simpler form as follows:
	\begin{equation}
		\begin{aligned}
			\psi_{O,1}^* %&= \mathop{\text{argmax}}_{\psi_{O,1}} \tfrac{2R \alpha + \cos \lambda_{TO}}{1-\alpha^2} \\
			&= \mathop{\text{argmax}}_{\psi_{O,1}} \left( \tfrac{2R \alpha}{1-\alpha} + \tfrac{1}{1-\alpha^2} \cos \lambda_{TO,2} \right)\\
			&= \mathop{\text{argmax}}_{\psi_{O,1}} \underbrace{\tfrac{2R \alpha}{1-\alpha}}_{\text{Const.}} + \mathop{\text{argmax}}_{\psi_{O,1}} \underbrace{\tfrac{1}{1-\alpha^2}}_{\text{Const.}} \cos \lambda_{TO,2} \\
			&= \mathop{\text{argmax}}_{\psi_{O,1}} \cos \lambda_{TO,2} \\
		\end{aligned}
		\label{eq:phase1max2}
	\end{equation}
	
	Over the interval $\lambda_{TO,2} \in [-\pi,\pi]$, the maximum of $\cos \lambda_{TO,2}$ occurs when $\lambda_{TO,2}$ is 0. And, due to the symmetry of the $\cos(\cdot)$ function over the domain $[-\pi,\pi]$, the absolute value of the argument should be minimized to maximize the $\cos(\cdot)$ of that argument. Because the domain of $\lambda_{TO,2} \in [-\pi,\pi]$, this fact allows the maximization problem in \cref{eq:phase1max2} to be equivalently written as a minimization problem:
	\begin{equation}
		\psi_{O,1}^* = \mathop{\text{argmax}}_{\psi_{O,1}} \cos \lambda_{TO,2} = \mathop{\text{argmin}}_{\psi_{O,1}} |\lambda_{TO,2}|.
		\label{eq:phase1max3}
	\end{equation}
	
	Furthermore, the initiated point of contact of the target by the observer at $t_2$ is a function of the state variables,
	\begin{equation}
		\lambda_{TO,2} = \tan^{-1}\begin{pmatrix} \tfrac{x_O(t_2)}{y_O(t_2) - y_T(t_2)}\end{pmatrix}%\atan(x_O - x_I,\ y_O - y_I),
		\label{eq:anglePhase2}.
	\end{equation}
	The angle $\lambda_{TO,2}$, the location of the observer at contact $M = (x_O(t_2),y_O(t_2))$, and the target at the time of contact by the observer $L = (0,y_T(t_2))$ are illustrated in \Cref{fig:overallGeometry}. Inserting \cref{eq:anglePhase2} in \cref{eq:phase1max3}, the minimization problem as a function of the state variables becomes
	\begin{equation}
		\psi_{O,1}^* = \mathop{\text{argmin}}_{\psi_{O,1}} \left|\tan^{-1}\begin{pmatrix} \tfrac{x_O(t_2)}{y_O(t_2) - y_T(t_2)}\end{pmatrix}\right|.
		\label{eq:anglePhase3}
	\end{equation}
	The arc-tangent function is an odd function defined for all real numbers therefore \cref{eq:anglePhase3} can be re-written as
	\begin{equation}
		\psi_{O,1}^* = \mathop{\text{argmin}}_{\psi_{O,1}} \left| \tfrac{x_O(t_2)}{y_O(t_2) - y_T(t_2)} \right|.
		\label{eq:anglePhase4}
	\end{equation}
	This minimization means that the optimal strategy for Phase-I is for the observer to take a heading for which the target contacts the observation disk as low as possible, thus maximizing the total observation time in Phase-II. 
	
	% it is directly in-line with the target and that the target be in trail of the observer's circle -- the lower on the circle that the observer intercepts the target, the greater the observation time in Phase-II.
	
	% \textcolor{red}{show that the lower the point the better the outcome. Can be shown by stating the monoticity of the atan function and the argument being small makes the function small. This is generated by making YI as small small as possible and by making xI and XO to be the same value (meaning top or bottom of the circle is optimal)}
	
	% \textcolor{red}{
		% TODO: Introduce the optimal control problem for phase 1 is to optimize tobs for phase 2 and then connnect this to maximizing lambda for the geometry. This motivates the use for geometric methods. Prove that the approach phase is a linear trajectory. Tie this to the Hamiltonian... but you can make a statement about the costates being constant and then use geometry once you show the trajectory is a straight line.
		% }
	
	\subsection{Decision Line}
	The state space is partitioned into regions $\mathscr{B}_1$, $\mathscr{B}_2$ and $\mathscr{B}_3$. The usage of a Decision Line for partitioning the state space is used for obtaining the optimal control of the observer.
	
	\begin{definition}
		The Decision Line ($\DL$) is the locus of points, $\DL \equiv \lbrace Z \rbrace$; where the Apollonius circle, whose foci are $Z$ and $T$ with associated speed ratio parameter, $\alpha$, is tangent to the target's path. 
		\label{def:DL}
	\end{definition}
	
	Points in the $(\hat{x}, \hat{y})$ frame that are \textit{above} the DL will have an Apollonius circle (w.r.t.\ the target, $T$, and associated speed ratio parameter, $\alpha$) that crosses the $\hat{y}$-axis, while points \textit{below} the DL will have Apollonius circles which do not intersect the $\hat{y}$-axis.
	The DL will be useful in determining which points on the observation disk (if any) can make contact with the target at the end of Phase-I (beginning of Phase-II).
	
	\begin{definition}
		The point $W$ is the point on the observation disk centered at $O$, whose radius is $R$, that is located at $(x_O, y_O-R)$. 
	\end{definition}
	The point $W$ is located at the \emph{bottom} of the observation disk and is important for determining the optimal strategy for the observer in the Phase-I, as will be seen later.
	
	\begin{lemma}
		The angle of the $\DL$ with respect to the $\hat{x}$-axis is $\theta_\DL$ = $\cos^{-1} \alpha$.
		\label{lma:DL}
	\end{lemma}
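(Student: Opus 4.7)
The plan is to exploit the geometry of the Apollonius circle associated with a slow-observer/fast-target pair and then extract $\theta_{\DL}$ from a simple right triangle.

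First, I would establish (either by completing the square in the defining equation $|PZ|^2 = \alpha^2|PT|^2$, or by citing standard Apollonius-circle formulas) that for any candidate point $Z$ and the current target position $T$, the Apollonius circle with speed ratio $\alpha\in(0,1)$ has center $C$ on the line through $T$ and $Z$ satisfying $C - T = (Z-T)/(1-\alpha^2)$, and radius $r = \alpha\,|TZ|/(1-\alpha^2)$. The key consequence is that the dimensionless quantity $r/|TC| = \alpha$ is independent of the particular $Z$ chosen.

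Next, I would impose the tangency condition. The target's path is the $\hat{y}$-axis, which already passes through $T$. Let $Q$ denote the point of tangency with the Apollonius circle. Then the triangle $TQC$ is right-angled at $Q$ (since the radius to a tangent point is perpendicular to the tangent line), with hypotenuse $TC$ and one leg $CQ = r$. Therefore
\[
\sin(\angle QTC) \;=\; \frac{CQ}{|TC|} \;=\; \frac{r}{|TC|} \;=\; \alpha.
\]
Because $C$ lies on the line $TZ$, the angle $\angle QTC$ is precisely the angle between the segment $TZ$ and the $\hat{y}$-axis, so that angle equals $\sin^{-1}\alpha$. Taking the complement gives the inclination of $TZ$ (and hence of the locus $\{Z\}$) with respect to the $\hat{x}$-axis: $\pi/2 - \sin^{-1}\alpha = \cos^{-1}\alpha$.

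Finally, since this inclination depends only on $\alpha$ and not on the distance of $Z$ from $T$ along the direction, the decision line is genuinely a straight line through $T$ (in fact a pair of rays, one on each side of the $\hat{y}$-axis, by the left/right symmetry of the problem), and $\theta_{\DL} = \cos^{-1}\alpha$ as claimed. The only computational content lies in the first step, locating the center and radius of the Apollonius circle, which is standard; everything afterwards is a single right-triangle identification. I do not foresee a real obstacle, and the main bookkeeping concern is the $\pm$ ambiguity arising from the two sides of the $\hat{y}$-axis, which is harmless since both sides yield the same inclination $\cos^{-1}\alpha$.
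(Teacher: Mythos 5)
Your proposal is correct and follows essentially the same route as the paper: both rest on the Apollonius-circle parameters $r = \alpha d/(1-\alpha^2)$ and $\overline{TC} = d/(1-\alpha^2)$, the right triangle formed by $T$, the center $C$, and the tangency point on the target's path, and the observation that the ratio $r/\overline{TC} = \alpha$ is independent of $d$, which makes the $\DL$ a straight line. The only cosmetic differences are that you derive the center/radius formulas rather than quoting them, and you read the angle at $T$ via $\sin^{-1}\alpha$ and complement it, whereas the paper reads $\cos\theta_\DL = \alpha$ directly from the same triangle.
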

	
	\begin{proof}
		Because the target is moving vertically along the y-axis of the Cartesian fixed frame, the center of the tangent circle to the target's path is horizontal to the point of tangency. The radius of the Apollonius circle which is tangent to the target's path has radius, $R = \tfrac{ \alpha d }{1 - \alpha^2}$, where $\alpha$ is the speed ratio parameter and $d$ is the separation distance between the foci $T$ and $Z$. Furthermore, the distance between $Z$ and the center of the Apollonius circle is $\overline{ZC} = \tfrac{\alpha^2 d}{1-\alpha^2}$. These distances are labeled in \Cref{fig:DLGeom} to assist the reader in their visualization of the Apollonius circle which is tangent to the target's path.
		\begin{figure}[htpb]
			\centering
			\begin{tikzpicture}[scale=1, vect/.style={->,shorten >=0.5pt,>=latex, line width = 0.2}]

% NODE LOCATIONS
\tkzDefPoint(4,4){C} 		% Center of Apollonius Circle
\tkzDefPoint(0,0){T} 		% Target
\tkzDefPoint(0,4){A} 		% Interception Point
\tkzDefPoint(2.75,2.75){Z} 		% Observer
\tkzDefPoint(1.5,0){xhat} 	% X Axis
\tkzDefPoint(0,5){yhat}     % Y Axis
\tkzDrawPoints(C) 			% Apollonius Circle Center
\tkzDrawPoints(T) 			% Target
\tkzDrawPoints(Z) 			% Target
\tkzDrawPoints(A) 			% Target

% Labels
\tkzLabelPoints[above right](C) 		% Observer Label
\tkzLabelPoints[above left](T) 		% Target Label
\tkzLabelPoints[above left](Z) 		% Target Label
\tkzLabelPoints[above right](A)		% Target Label
% Circle
% \tkzDrawCircle[-,line width=0.5](C,I) 	% 
\tkzDrawArc[line width=3,color=black, delta=15, opacity=0.5, color = green!50!black](C,A)(T)
\draw(T) node [right, scale = 1, shift={(0.2,2.8)}]{Apol$(T,Z)$}; 	% Apollonius Circle
\tkzDrawSegment[dim={$~d~$,-45pt,below right},line width=0.5](T,Z)
\tkzDrawSegment[dim={$\frac{d \alpha^2}{1 - \alpha^2}$,-45pt,},line width=0.5](Z,C)
\tkzDrawSegment[dim={$\; \frac{d \alpha}{1 - \alpha^2} \;$ ,16pt,},line width=0.5](A,C)

% Segments
\tkzDrawSegments[->,>=latex, line width=0.25](T,xhat)
\tkzDrawSegments[->,>=latex, line width=0.25](T,yhat)
\tkzDrawSegments[-, line width=0.25](T,C)
\tkzDrawSegments[-, line width=0.25](A,C)

% Coordinates
\draw(xhat) node [right]{$\hat{x}$};
\draw(yhat) node [left]{$\hat{y}$};

% Angles
\tkzMarkAngle[size = 1, color = black,vect, mark=0, line width=0.3](A,C,T)
\tkzLabelAngle[pos = 1.4, scale=1, shift={(0,0.1)}](A,C,T){$\theta_{DL}$}
\tkzMarkAngle[size = 0.8, color = black,vect, mark=0, line width=0.3](xhat,T,C)
\tkzLabelAngle[pos = 1.2, scale=1, shift={(0,-0.1)}](xhat,T,C){$\theta_{DL}$}
\tkzMarkRightAngle[scale = 1](T,A,C)
\tkzMarkRightAngle[scale = 1](xhat,T,A)

% \tkzMarkAngle[size = 0.7, color = black,vect, mark=0, line width=0.3](xhatT,T,v_T)
% \tkzLabelAngle[pos = 1.1, scale=1, shift={(-0.3,0.1)}](xhatT,T,v_T){$\psi_T$}
% \tkzMarkAngle[size = 1.3, color = black,vect, mark=0, line width=0.3](xhatT,T,O)
% \tkzLabelAngle[pos = 1.7, scale=1, shift={(-0.1,0)}](xhatT,T,O){$\theta_T$}
% \tkzMarkAngle[size = 1.3, color = black,vect, mark=0, line width=0.3](O,T,v_T)
% \tkzLabelAngle[pos = 1.7, scale=1, shift={(-0.1,-0.1)}](O,T,v_T){$\lambda_{TO}$}
% \draw($(R)!0.4!(O)$) node [above, scale = 1, shift={(-0.1,0.1)}]{$R$};

\end{tikzpicture}
			\caption{The Decision Line $\DL$ determines if a vehicle is capable of reaching the target vehicle's path. If a pursuing vehicle is under the decision line, then it can not reach the non-maneuvering target.}
			\label{fig:DLGeom}
		\end{figure}
		From the geometry of Apollonius Circle, Apol$(T,Z)$ and $\triangle CAT$; as shown in \Cref{fig:DLGeom}, the angle of the decision line is found from solving the following for $\theta_\DL$,
		\begin{equation}
			\cos \theta_\DL = \tfrac{\tfrac{d \alpha }{ 1 - \alpha^2 }}{d + \left( \tfrac{d \alpha^2}{1-\alpha^2} \right)} = \alpha.
			\label{eq:DLangle}
		\end{equation}
		Therefore, the angle of the decision line with respect to the target's location is 
		\begin{equation}
			\theta_\DL = \cos^{-1} \alpha.
		\end{equation}
		Furthermore, the $\DL$ is a line because $\theta_\DL$ is independent of the distance between $T$, $Z$, and $d$. Thus any point whose angle with respect to $T$ is $\theta_\DL$ is in $\DL$. Therefore, $\DL$, must be a straight line.
	\end{proof}
	
	\begin{definition}
		$\mathscr{B}_1$ is the region of the state space where no observation is possible: $\mathscr{B}_1 \triangleq \lbrace \mathbf{x} \;|\; \tobs = 0 \rbrace$.
		\label{def:B1}
	\end{definition}
	
	\begin{definition}
		$\mathscr{B}_3$ is the region of the state space where the optimal observation time is the maximum possible observation time: $\mathscr{B}_3 \triangleq \lbrace \mathbf{x} \;|\; \tobs = \tfrac{2R}{1 - \alpha} \rbrace$.
		\label{def:B3}
	\end{definition}
	
	\begin{definition}
		$\mathscr{B}_2$ is the region of the state space where the optimal observation time is bounded between zero and the maximum observation time: $\mathscr{B}_2 \triangleq \lbrace \mathbf{x} \;|\; 0 < \tobs < \tfrac{2R}{1 - \alpha} \rbrace$. By substraction from the state space $\mathbf{x} \in \mathbb{R}^3$, $\mathscr{B}_2 = (\mathscr{B}_1 \cup \mathscr{B}_3)'$.
		\label{def:B2}
	\end{definition}
	
	\begin{lemma}
		If the point at the bottom of the observation disk, $W$, is on or above the decision line, then the state $\mathbf{x} \in \mathscr{B}_3$. 
		\label{thm:B3}
	\end{lemma}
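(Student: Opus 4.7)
The plan is to exhibit a straight-line Phase-I heading $\psi_{O,1}^*$ under which the observer achieves $\lambda_{TO,2}=0$ at the first contact instant $t_2$. By \Cref{lma:maxPossible} this attains the absolute maximum observation time $\tfrac{2R}{1-\alpha}$; since that quantity upper-bounds $\tobs$, any admissible strategy attaining it is automatically optimal, so $\mathbf{x}\in\mathscr{B}_3$ follows directly from \Cref{def:B3}. The geometric construction turns on the Apollonius-circle interpretation that underlies \Cref{def:DL}.

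By \Cref{def:DL}, the hypothesis that $W$ lies on or above $\DL$ is precisely the statement that the Apollonius circle $\mathrm{Apol}(T,W)$ with ratio $\alpha$ is tangent to, or properly crosses, the target's path; the commentary following \Cref{def:DL} (that points below $\DL$ have no reaching intersection with the target's path) then guarantees at least one intersection point $P=(0,y_P)$ with $y_P\geq y_T(t_1)$. The defining property of $\mathrm{Apol}(T,W)$ yields $|P-W(t_1)|=\alpha\,|P-T(t_1)|=\alpha(y_P-y_T(t_1))$. Choose $\psi_{O,1}^*$ so that the observer translates along a straight line aimed at $P+(0,R)$; because $W(t)=O(t)-(0,R)$ is a rigid offset, the point $W$ then travels at speed $\alpha$ straight toward $P$ and arrives there at time $t_2=t_1+|P-W(t_1)|/\alpha=t_1+(y_P-y_T(t_1))$, which is exactly when the target (unit speed along $\hat{y}$) reaches $P$. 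At this common arrival time $W(t_2)=T(t_2)=P$, so $O(t_2)=P+(0,R)$, giving $x_O(t_2)=0$ and $y_O(t_2)-y_T(t_2)=R$; substitution into \cref{eq:anglePhase2} yields $\lambda_{TO,2}=0$, and \Cref{lma:maxPossible} then delivers $\tobs=\tfrac{2R}{1-\alpha}$.

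It remains to certify that $t_2$ is indeed the first contact time (so Phase-II begins at $t_2$ rather than earlier). Along the chosen straight-line trajectories, the squared miss distance $r^2(t)\triangleq x_O^2(t)+(y_O(t)-y_T(t))^2$ is a convex-upward quadratic in $t$ (its leading coefficient is $1-2\alpha\sin\psi_{O,1}^*+\alpha^2\geq(1-\alpha)^2>0$), and its time-derivative at $t_2$ equals $2R(\alpha\sin\psi_{O,1}^*-1)<0$ since $\alpha<1$; hence $r(t)>R$ throughout $[t_1,t_2)$, confirming first contact at $t_2$. The main obstacle, and the only step demanding genuine computation, is the existence claim for an intersection $P$ with $y_P\geq y_T(t_1)$ under the on-or-above-$\DL$ hypothesis. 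I would close this gap by direct algebra using the formulas extracted from the proof of \Cref{lma:DL}: center $C=T+\tfrac{1}{1-\alpha^2}(W-T)$, radius $\tfrac{\alpha|WT|}{1-\alpha^2}$; substituting $x=0$ into the circle equation reduces to a quadratic in $y_P$ whose discriminant is non-negative exactly when the bearing from $T$ to $W$ makes an angle with the $\hat{x}$-axis no smaller than $\theta_\DL=\cos^{-1}\alpha$ (with equality in the tangent case), and whose larger root satisfies $y_P\geq y_T(t_1)$.
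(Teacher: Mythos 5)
Your proposal is correct and follows essentially the same route as the paper: the hypothesis that $W$ lies on or above the $\DL$ means, via \Cref{lma:DL} and the Apollonius-circle construction of \Cref{def:DL}, that $W$ can intercept the target on its path, forcing first contact at $\lambda_{TO,2}=0$ and hence $\tobs = \tfrac{2R}{1-\alpha}$ by \Cref{lma:maxPossible} and \Cref{def:B3}. The paper's own proof is a two-sentence appeal to \Cref{lma:DL}, so your extra work --- the explicit simultaneous-arrival construction, the discriminant condition $\lvert\cos\theta\rvert \le \alpha$ (which indeed reduces to $\theta \ge \cos^{-1}\alpha$ for $x_W \ge 0$), and especially the convexity argument certifying that $t_2$ is the \emph{first} contact time --- is all correct and merely supplies rigor that the paper leaves implicit.
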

	
	\begin{proof}
		By \Cref{lma:DL}, any point in the Cartesian space above or on the decision line may reach the target. If a point on the bottom of the observation disk lies on or above the decision line then the bottom of the observation disk may reach the target, corresponding to $\lambda_{TO,2} = 0$, and the observation time is therefore $\tobs = \tfrac{2R}{1 - \alpha}$.
	\end{proof}
	
	\begin{lemma}
		If the observation disk lies beneath the decision line and intersects with the decision line at 1 or fewer points then the state $\mathbf{x} \in \mathscr{B}_1$.
		\label{thm:B1}
	\end{lemma}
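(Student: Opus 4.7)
The plan is to translate the geometric hypothesis into an algebraic reachability condition on $O$ and $T$. Working in the target's translating frame, the observer starts at $(x_O, y_O - y_T)$ with relative velocity $(\alpha\cos\psi_O,\, \alpha\sin\psi_O - 1)$, which sweeps the circle of radius $\alpha$ about $(0,-1)$ as $\psi_O$ varies. Hence the attainable relative-velocity directions fill a cone of half-angle $\sin^{-1}\alpha$ about $-\hat y$, whose edges make angle $\cos^{-1}\alpha = \theta_{\DL}$ with the horizontal. Because Phase-I is optimally a constant heading (shown earlier in this section), the set of relative positions the observer can ever reach under some straight-line trajectory is a planar cone $\mathscr{R}$ with apex $(x_O, y_O - y_T)$ opening downward with this half-angle.

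Contact is possible if and only if $\mathscr{R}$ meets the closed disk of radius $R$ about the origin. A short Minkowski-sum calculation shows $\mathscr{R} \oplus \{d : |d| \leq R\}$ is again a cone with the same half-angle but with apex shifted to $(x_O,\, y_O - y_T + R/\alpha)$; the origin lies inside this enlarged cone iff
\begin{equation*}
\alpha(y_O - y_T) + R \;\geq\; \sqrt{1 - \alpha^2}\,|x_O|.
\end{equation*}
The right-hand side is exactly the perpendicular distance from $O$ to the nearer edge of the upward V at $T$ cut out by $\DL$, since the line $\sqrt{1-\alpha^2}\,x - \alpha y = 0$ has unit normal $(\sqrt{1-\alpha^2},\, -\alpha)$. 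Therefore contact is possible iff the observation disk of radius $R$ about $O$ meets the above-$\DL$ region in more than a single point.

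By hypothesis the observation disk lies beneath $\DL$ and intersects it in at most one point, which is the precise negation of the preceding condition (with tangency permitted). In the strict case, contact is impossible, so $\tobs = 0$ directly. In the tangent case, the enlarged cone contains the origin only on its boundary, so $|Z(t) - T(t)|$ attains $R$ at a single instant and strictly exceeds $R$ otherwise; the corresponding Phase-II bearing then satisfies $\cos\lambda_{TO,2} = -\alpha$, at which \cref{eq:observationTime} yields $\tobs = 0$ (consistent with \Cref{lma:ZeroTime}). In either case $\mathbf{x} \in \mathscr{B}_1$.

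The main obstacle is the Minkowski-sum step — specifically, arguing cleanly that the apex of the enlarged cone moves by $R/\alpha$ rather than $R$, and confirming that the tangent sub-case contributes only a measure-zero contact window rather than an infinitesimal positive one. A secondary subtlety concerns the regime where $O$ lies well below $T$, so that the perpendicular foot from $O$ to the $\DL$ edge falls past the apex at $T$: there the nearest point of the above-$\DL$ region is $T$ itself, the relevant inequality degenerates to $|O - T| \geq R$, and this holds automatically since we are still in Phase-I.
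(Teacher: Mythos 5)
Your proof is correct, and it reaches the conclusion by a more self-contained route than the paper does. The paper's own proof is terse: it invokes \Cref{lma:DL} (points on or above the $\DL$ can reach the target, so a disk strictly below it admits no contact) and disposes of the tangent case by noting the tangency bearing is exactly the zero-time limit of \Cref{lma:ZeroTime}. You share that skeleton --- strict case: no contact; tangent case: a single-instant contact at $\cos \lambda_{TO,2} = -\alpha$, whence $\tobs = 0$ by \cref{eq:observationTime} --- but instead of citing the Apollonius construction you rebuild the reachability geometry from first principles in the target's frame: the relative-velocity circle of radius $\alpha$ about $(0,-1)$ spans a direction cone of half-angle $\sin^{-1}\alpha$, so the reachable relative positions form the downward cone $\mathscr{R}$, and contact is characterized (away from the tip regime) by the explicit inequality $\alpha(y_O - y_T) + R \geq \sqrt{1-\alpha^2}\,|x_O|$. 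This buys two things the paper leaves implicit: it actually proves the converse half of \Cref{lma:DL} that this lemma needs (points strictly below the $\DL$ cannot reach the target under \emph{any} control, not just straight lines, since every trajectory's displacement lies in the convex cone of attainable velocities), and it turns the equivalence between the lemma's geometric hypothesis and non-reachability into a computation rather than an appeal to the figure. One imprecision should be repaired: $\mathscr{R} \oplus \lbrace d : |d| \leq R \rbrace$ is not a cone but a \emph{rounded} cone, and the shifted-apex cone (apex displaced by $R/\alpha$) strictly contains it near the tip, so your displayed inequality over-approximates the contact set in exactly the regime you flag, where the perpendicular foot from $O$ falls past the apex at $T$. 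Your patch there is sound, and it can be made airtight without appealing to ``still being in Phase-I'': if $|O - T| \leq R$ then the target lies in the closed observation disk, so the $\DL$, which passes through $T$, would meet the disk at two points (or be tangent at $T$ itself, again a single-instant, zero-time contact), contradicting the lemma's hypothesis; hence contact fails in the tip regime as well, and $\mathbf{x} \in \mathscr{B}_1$ in every case.
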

	
	\begin{proof}
		By \Cref{lma:DL}, any point in the Cartesian space above or on the decision line may reach the target. If the entire observation disk lies beneath the decision line, then the observer is unable to reach the target for any amount of time. If the decision line intersects the observation at 1 point, then it does so tangentially. Because the decision line has an angle $\theta_{DL} = \cos^{-1} \alpha$, this means that the tangent point occurs at the angle defined by the limit of the observation time from \Cref{lma:ZeroTime}.
	\end{proof}
	
	\begin{lemma}
		If the decision line intersects the observer's disk at 2 points, then the state $\mathbf{x} \in \mathscr{B}_2 \cup \mathscr{B}_3$. Consequently, the optimal observation time is non-zero.
		\label{thm:nonZero}
	\end{lemma}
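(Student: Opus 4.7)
The plan is to split on the location of $W$ relative to $\DL$ and, in the nontrivial subcase, to exhibit an explicit Phase-I heading that already achieves $\tobs>0$. If $W$ lies on or above $\DL$, \Cref{thm:B3} immediately gives $\mathbf{x}\in\mathscr{B}_3\subseteq\mathscr{B}_2\cup\mathscr{B}_3$. Otherwise, the 2-point intersection hypothesis combined with $W$ strictly below $\DL$ pins the signed perpendicular distance $d_s\triangleq \alpha(y_O-y_T)-\sqrt{1-\alpha^2}\,x_O$ from the disk center to $\DL$ into the range $-R<d_s<\alpha R$, and the disk boundary contains a nonempty open arc strictly above $\DL$.

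The main tool is a ``virtual observer'' construction. Pick any point $P=(x_O,y_O)+R(\cos\beta_P,\sin\beta_P)$ on the disk boundary strictly above $\DL$. By \Cref{lma:DL}, the Apollonius circle with foci $P,T$ and ratio $\alpha$ crosses the target's path, so a point agent placed at $P$ intercepts the target at some $L=(0,y_L)$ with $y_L>y_T$, after time $t_2=\lvert L-P\rvert/\alpha$, by moving along $\hat d=(L-P)/\lvert L-P\rvert$. If the actual observer adopts the same straight-line heading $\hat d$ from $(x_O,y_O)$, then at time $t_2$ its center reaches $M=(x_O,y_O)+(L-P)=L-R(\cos\beta_P,\sin\beta_P)$, so $\lvert M-L\rvert=R$ and the observation disk contacts the target. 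Comparing with the convention $M-L=R(\sin\lambda_{TO,2},\cos\lambda_{TO,2})$ implicit in \cref{eq:optimalHeading2} gives the clean identity $\lambda_{TO,2}\equiv -\pi/2-\beta_P\pmod{2\pi}$.

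By \Cref{lma:ZeroTime}, this contact produces $\tobs>0$ iff $|\lambda_{TO,2}|<\pi-\theta_\DL$, equivalently $\beta_P\notin(\pi/2-\theta_\DL,\,\pi/2+\theta_\DL)$, i.e., $P$ avoids the ``top arc'' of angular width $2\theta_\DL$. What remains, and where the technical heart of the proof lies, is to verify that the set of disk points strictly above $\DL$ is not entirely contained in this forbidden top arc. Writing the above-$\DL$ condition as $R\sin(\beta_P-\theta_\DL)>-d_s$, the admissible set is the open arc $(\theta_\DL+\delta,\theta_\DL+\pi-\delta)$ with $\delta\triangleq\sin^{-1}(-d_s/R)\in(-\pi/2,\pi/2)$, the strict endpoints reflecting $-R<d_s<R$; a direct comparison with $(\pi/2-\theta_\DL,\pi/2+\theta_\DL)$ shows the set difference is nonempty, because the alternative (full containment) would force $\delta\geq\pi/2$, which is excluded by hypothesis. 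Any such $P$, paired with the optimal Phase-II heading from \Cref{lma:OptimalHeadingII} at the moment of contact, delivers $\tobs>0$ and hence $\mathbf{x}\in\mathscr{B}_2\cup\mathscr{B}_3$. The principal obstacle is precisely this interval comparison: it is where the 2-intersection hypothesis translates into a strictly positive angular margin separating ``above $\DL$'' from the zero-observation top arc.
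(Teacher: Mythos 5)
Your proof is correct, and it takes a genuinely different route from the paper's. The paper's own argument is short and essentially definitional: two intersection points mean the $\DL$ cuts a chord in the observation disk, so points of the disk lie on or above the $\DL$; by \Cref{lma:DL} such points can reach the target, hence $\tobs \neq 0$, so $\mathbf{x} \notin \mathscr{B}_1$ by \Cref{def:B1}, and the partition in \Cref{def:B2} and \Cref{def:B3} forces $\mathbf{x} \in \mathscr{B}_2 \cup \mathscr{B}_3$. You instead build an explicit witness strategy: a boundary point $P$ strictly above the $\DL$ acting as a virtual interceptor, the induced observer heading, the bearing identity $\lambda_{TO,2} \equiv -\tfrac{\pi}{2}-\beta_P$ (which checks out against \cref{eq:anglePhase2}), and the arc comparison showing the above-$\DL$ arc $(\theta_\DL+\delta,\,\theta_\DL+\pi-\delta)$ is never contained in the zero-observation top arc $[\tfrac{\pi}{2}-\theta_\DL,\,\tfrac{\pi}{2}+\theta_\DL]$ because containment would force $\delta \geq \tfrac{\pi}{2}$, contradicting $|d_s|<R$. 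This is heavier than the paper's two lines, but it buys rigor the paper leaves implicit: ``can reach the target'' does not by itself yield $\tobs>0$, since by \Cref{lma:ZeroTime} contact at a rear-sector bearing ($|\lambda_{TO,2}| \geq \cos^{-1}(-\alpha) = \pi-\theta_\DL$) gives zero observation time, and your angular-margin argument is precisely what closes that step; it also pre-derives machinery (the intersection point and the bearing map) that the paper only develops later in \Cref{lma:observation_phase1_case2,lma:heading_phase1_case2}. Two minor remarks: first contact may occur before your interception time $t_2$ (the target's relative trajectory is a straight line meeting the disk boundary at most twice), but this only helps, since an earlier entry puts the target in the interior and again gives $\tobs>0$; and your opening case split on $W$ via \Cref{thm:B3} is harmless but unnecessary, because the arc argument uses only $-R<d_s<R$, which the two-intersection hypothesis supplies regardless of where $W$ sits.
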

	
	\begin{proof}
		If a line intersects a circle at two real points, it can not be tangential to the circle, and by definition must intersect the circle~\cite[pp. 459]{rhoad1997geometry}. This requires the line to create a chord in the circle. By \Cref{lma:DL}, any point in the Cartesian space above or on the decision line may reach the target. Since two such points exist where the DL crosses the observation disk, via. a chord, it is possible for the observer to reach the target and $\tobs \neq 0$. By \Cref{def:B1}, all states where $\tobs = 0$ belong to $\mathscr{B}_1$; therefore, $\mathbf{x} \notin \mathscr{B}_1$, and therefore  by \Cref{def:B2} and \Cref{def:B3} $\mathbf{x} \in \mathscr{B}_2 \cup \mathscr{B}_3$.
	\end{proof}
	
	\begin{lemma}
		If the decision line intersects the observer's disk at 2 points and the x-coordinate of either of said points is less than the x-coordinate of $O$, then the state $\mathbf{x} \in \mathscr{B}_2$.
		\label{thm:B2}
	\end{lemma}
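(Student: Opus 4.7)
The plan is first to apply Lemma~\ref{thm:nonZero}: since DL intersects the observation disk in two points, one immediately obtains $\mathbf{x} \in \mathscr{B}_2 \cup \mathscr{B}_3$. It therefore suffices to rule out the case $\mathbf{x} \in \mathscr{B}_3$. By the characterization behind Lemma~\ref{thm:B3}, $\mathbf{x} \in \mathscr{B}_3$ requires the bottom point $W = (x_O, y_O - R)$ of the observation disk to lie on or above DL, so I aim to establish the contrapositive: the extra hypothesis that at least one intersection point has abscissa strictly less than $x_O$ forces $W$ to lie \emph{strictly below} DL.

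The key step is a short planar-geometry computation. By Lemma~\ref{lma:DL}, DL has slope $m = \tan(\cos^{-1}\alpha) = \sqrt{1-\alpha^2}/\alpha > 0$; write its equation as $y = mx + c$. Let $(x_1, y_1)$ be the intersection of DL with the disk satisfying $x_1 < x_O$. Since $(x_1, y_1)$ lies on the circle $(x - x_O)^2 + (y - y_O)^2 = R^2$ with $x_1 \neq x_O$, we have $(y_1 - y_O)^2 < R^2$, hence $y_1 > y_O - R$. Evaluating DL at the abscissa $x_O$ yields $y_1 + m(x_O - x_1)$, which, because $m > 0$ and $x_O - x_1 > 0$, strictly exceeds $y_1$ and therefore strictly exceeds $y_O - R$. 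Thus DL lies strictly above $W$ at $x = x_O$, i.e., $W$ is strictly below DL.

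The only remaining concern is that Lemma~\ref{thm:B3} is phrased as a sufficient condition only ($W$ on or above DL $\Rightarrow \mathbf{x} \in \mathscr{B}_3$), while the argument above requires the converse. This converse follows from the same Apollonius construction used in Lemma~\ref{lma:DL}: a point strictly below DL admits no straight-line path, at relative speed $\alpha$ against a unit-speed target, onto the target's trajectory at a point the target reaches simultaneously. Applied to $W$, this rules out $\lambda_{TO,2} = 0$ as a feasible Phase-II entry bearing, and then Lemma~\ref{lma:maxPossible} gives $\tobs < 2R/(1-\alpha)$, excluding $\mathscr{B}_3$. I would insert this short observation into the proof to make the step airtight; combined with Lemma~\ref{thm:nonZero}, we conclude $\mathbf{x} \in \mathscr{B}_2$, and no other step appears to offer real difficulty.
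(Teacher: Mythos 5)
Your proof is correct, and its skeleton matches the paper's: invoke \Cref{thm:nonZero} to reduce to $\mathbf{x} \in \mathscr{B}_2 \cup \mathscr{B}_3$, then decide between the two via the position of $W$ relative to the DL. The difference is in how the deciding step is handled, and here your version is actually tighter than the paper's. The paper argues by contradiction and simply \emph{asserts} two facts: that a chord lying entirely to the right of $O$ forces $W$ on or above the DL, and that $\mathbf{x} \in \mathscr{B}_3$ ``would require'' both intersections to satisfy $x_{I} \geq x_O$ --- the latter being precisely the converse of \Cref{thm:B3}, which is stated only as a sufficient condition. You prove the geometric fact explicitly (the slope computation $m = \sqrt{1-\alpha^2}/\alpha > 0$ together with $(y_1 - y_O)^2 < R^2$ from $x_1 < x_O$, giving the DL strictly above $y_O - R$ at abscissa $x_O$), and you flag and patch the missing converse by noting that a point strictly below the DL admits no interception of the target's path --- a property the paper itself states informally just after \Cref{def:DL} --- so that $\lambda_{TO,2} = 0$ is infeasible and $\tobs < \tfrac{2R}{1-\alpha}$. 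One small attribution nit: the strict inequality $\tobs < \tfrac{2R}{1-\alpha}$ for $\lambda_{TO,2} \neq 0$ is really the content of \Cref{lma:Observation} (unique maximum of \cref{eq:observationTime} at $\lambda_{TO,2}=0$) rather than \Cref{lma:maxPossible}, which only evaluates the maximum; with that citation swapped, your argument is a strict refinement of the paper's proof rather than merely a restatement of it.
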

	
	\begin{proof}
		By \Cref{thm:nonZero}, If the decision line intersects the observer's disk at 2 points, then the state $\mathbf{x} \in \mathscr{B}_2 \cup \mathscr{B}_3$. Consequently, the optimal observation time is non-zero. Consider each intersection point: $I_A$ and $I_B$ whose coordinates are $(x_{IA}, y_{IA})$ and $(x_{IB},y_{IB})$ respectively. Recall that the observer location is $O$ and its Cartesian coordinate is $(x_O,y_O)$. Consider the following by contradiction: If both $x_{IA} \geq x_O$ and $x_{IB} \geq x_O$, then the DL creates a chord whose points are to the right of $O$. Therefore, the point at the bottom of the observation disk is on or above the DL. Therefore, from \Cref{thm:B3}, $\mathbf{x} \in \mathscr{B}_3$. 
		
		If either $x_{IA} < x_O$ or $x_{IB} < x_O$, then the state does not belong to $\mathscr{B}_1$ (because there are two points) or $\mathscr{B}_3$, for this would require both $x_{IA} \geq x_O$ and $x_{IB} \geq x_O$. Therefore, by \Cref{def:B2}, $\mathbf{x} \in \mathscr{B}_2$.
	\end{proof}
	
	\subsection{Optimal Observer Strategy - Phase I}
	The optimal heading for $O$ in the first phase is dependent upon the speed ratio parameter, $\alpha$, the observation range, $R$, and the initial state, $\mathbf{x}(t_0) = [x_O(t_0), y_O(t_0), y_T(t_0)]^\transpose$. The regions $\mathscr{B}_1$, $\mathscr{B}_2$, and $\mathscr{B}_3$, and the decision line are shown in \cref{fig:phaseDiagram}. Three targets for an observer with observation range, $R$, and speed ratio parameter, $\alpha$, are shown in order to highlight the various regions where the optimal observation time is maximum, non-zero, and, zero. 
	\begin{figure}[htpb]
		\centering
		\includegraphics[width=3.5in]{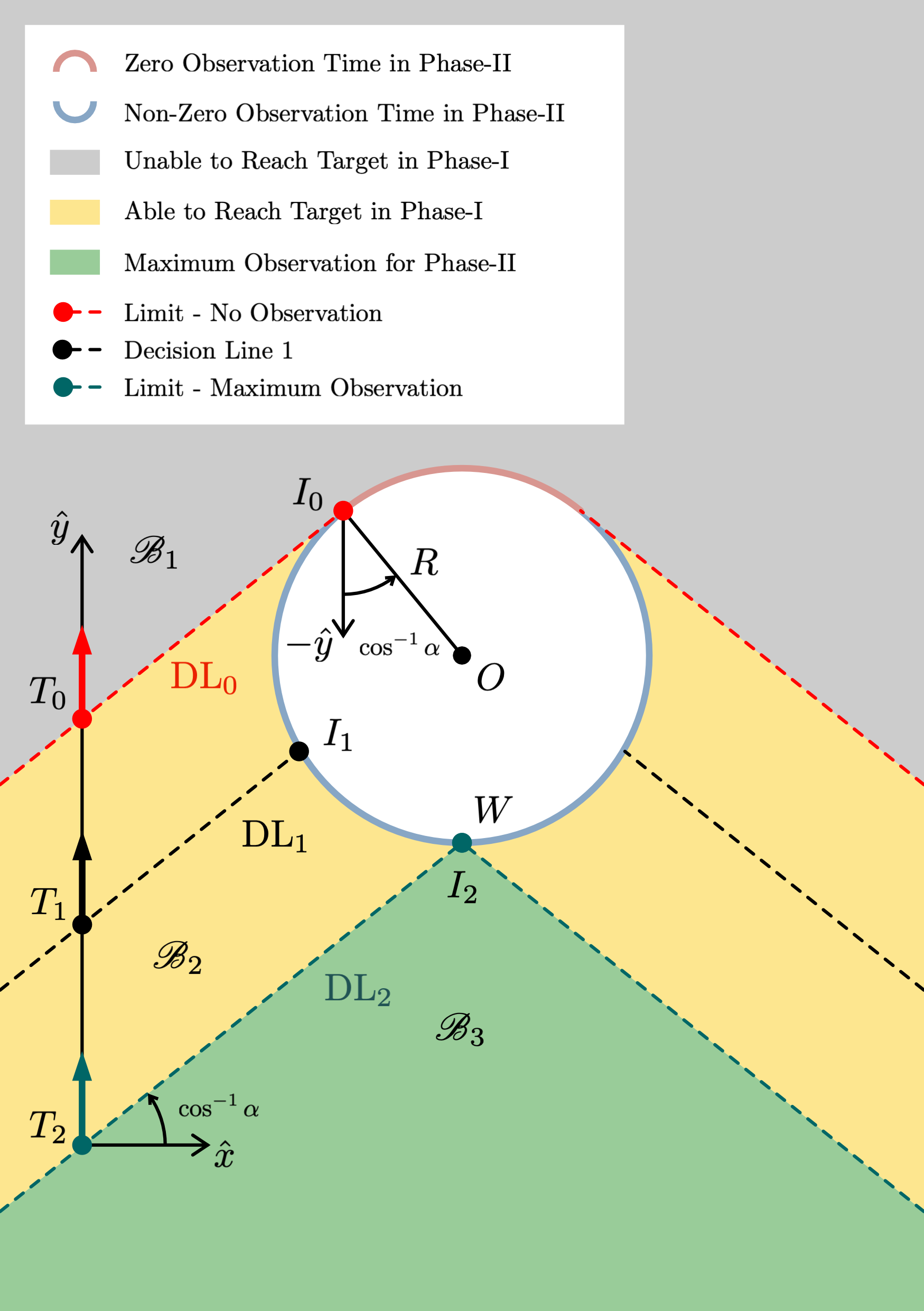}
		\caption{The outcome of the optimal observation scenario is dictated by the relative position of the target to the observer, the speed ratio parameter, $\alpha$, and the observation range, $R$. At initial time, if the target is located in the grey region then the observer is unable to reach the target. If the target is located in the yellow or green regions, relative to the observer's location $O$, then the observer can observe the target for some non-zero amount of time. }
		\label{fig:phaseDiagram}
	\end{figure}
	
	\subsubsection{Case 1}
	First, consider $T_0$. At initial time, $\mathbf{x}_0 \in \mathscr{B}_1$. By \Cref{thm:B1}, the observer can not reach the target and the $\tobs = 0$.
	Consequently, the optimal control, $\psi_{O,1}^*$, is undefined.
	
	If the target is located in the yellow or green regions, relative to the observer's location $O$, then the observer can observe the target for some non-zero amount of time.
	
	\subsubsection{Case 2}
	
	Next, consider $T_1$. At initial time, $\mathbf{x}_1 \in \mathscr{B}_2$. By \Cref{def:B2}, the observer reaches the target at some angle that is not directly in-front of the target.
	
	\begin{lemma}
		\label{lma:observation_phase1_case2}
		If $\mathbf{x} \in \mathscr{B}_2$, the optimal observation time is
		\begin{equation}
			\label{eq:time_phase1_case2}
			\tobs = \tfrac{2R}{1-\alpha^2} \left( \alpha + \tfrac{y_O - y_I}{R} \right),	
		\end{equation}
		where
		%
		% \begin{equation}
			% 	\label{eq:xIyI_phase1_case2}
			% 	\begin{aligned}
				% 		y_I &= \tfrac{\sqrt{1 - \alpha^2} }{\alpha} x_I, \\
				% 		x_I &= 
				% 		\begin{multlined}[t]
					% 			\alpha^2 x_O + \alpha y_O \sqrt{1 - \alpha^2}  - \\
					% 			\alpha^2 \sqrt{\left( x_O + y_O \tfrac{\sqrt{1 - \alpha^2}}{\alpha} \right)^2 - \tfrac{1}{\alpha^2} \left( x_O^2 + y_O^2 - R^2 \right)}.
					% 		\end{multlined}
				% 	\end{aligned}
			% \end{equation}
		\begin{equation}
			\label{eq:xIyI_phase1_case2}
			\begin{aligned}
				y_I=&\; m_{\text{DL}} x_I,\\
				x_I=&\; \alpha^2 \left( x_0 + y_O m_{\text{DL}}  - \sqrt{(x_O + y_O m_{\text{DL}})^2 - \tfrac{\sigma}{\alpha^2}} \right),\\
				m_{\text{DL}}=&\; \tfrac{\sqrt{1 - \alpha^2}}{\alpha}, \quad 
				\sigma=x_O^2+y_O^2-R^2.
			\end{aligned}
		\end{equation}
	\end{lemma}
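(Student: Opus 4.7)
I would pass into the target's translating reference frame, in which the target is fixed at the origin and the observer (with $\psi_{O,1}$ constant throughout Phase-I) moves along a straight line with relative velocity $\vec{v}_{\text{rel}} = (\alpha\cos\psi_{O,1},\,\alpha\sin\psi_{O,1}-1)$. In this frame the contact event is simply the first hit of the observer's line with the stationary circle $x^2+y^2=R^2$, and by \cref{eq:phase1max3} the Phase-I objective reduces to maximizing the $y$-coordinate of that first intersection (which equals $R\cos\lambda_{TO,2}$). The observation time will then follow from \cref{eq:observationTime}.

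As $\psi_{O,1}$ varies, the tip of $\vec{v}_{\text{rel}}$ traces the circle of radius $\alpha<1$ centered at $(0,-1)$, so the feasible set of $\vec{v}_{\text{rel}}$-directions forms a cone of half-angle $\sin^{-1}\alpha$ about the negative $y$-axis whose two bounding unit directions are $(\pm\alpha,-\sqrt{1-\alpha^2})$, i.e., rays of slope $\mp m_{\text{DL}}$, parallel to the $\DL$. Translating this cone to $(x_O,y_O)$ gives the reachable region in the target's frame. For $\mathbf{x}\in\mathscr{B}_2$, \Cref{thm:B3} precludes $\cos\lambda_{TO,2}=1$ (the top of the circle is unreachable), while \Cref{thm:nonZero} guarantees a non-empty reachable arc, so the optimum must be attained on the boundary of the reachable set---along a bounding tangent ray of the cone. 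A brief check (using $\sigma = x_O^2+y_O^2-R^2>0$ in Phase-I together with the configuration characterized in \Cref{thm:B2}) singles out the ray traversed from $(x_O,y_O)$ in direction $(-\alpha,-\sqrt{1-\alpha^2})$---the one of slope $+m_{\text{DL}}$, parallel to and above the $\DL$---as giving the larger $y$-coordinate at first contact.

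Parametrizing that optimal ray by $(x_O-\alpha s,\,y_O-\sqrt{1-\alpha^2}\,s)$, $s\ge 0$, and substituting into $x^2+y^2=R^2$ produces
\[
s^2 \;-\; 2\alpha\bigl(x_O + y_O m_{\text{DL}}\bigr)\,s \;+\; \sigma \;=\; 0.
\]
The pivotal identity is that parametrizing the $\DL$ itself as $\{(\alpha s,\sqrt{1-\alpha^2}\,s)\}$ and intersecting it with the initial observation disk $(x-x_O)^2+(y-y_O)^2=R^2$ produces the \emph{same} polynomial in $s$, since the optimal tangent ray through $(x_O,y_O)$ is a parallel unit-speed line to the $\DL$ and the two squared-distance expressions coincide term-by-term. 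Its smaller positive root $s^\ast$ then yields $(x_I,y_I)=(\alpha s^\ast,\sqrt{1-\alpha^2}\,s^\ast)$, matching \cref{eq:xIyI_phase1_case2}; the contact point in the target's frame is therefore $(x_O-x_I,\,y_O-y_I)$, so $\cos\lambda_{TO,2}=(y_O-y_I)/R$, and substitution into \cref{eq:observationTime} delivers \cref{eq:time_phase1_case2}. The main obstacle is the optimality claim in the second paragraph---identifying the correct bounding tangent under the \Cref{thm:B2} geometry---after which the remainder is just the quadratic-formula computation combined with the parallel-line identity between the tangent ray through $(x_O,y_O)$ and the $\DL$ through the origin.
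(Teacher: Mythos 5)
Your proposal is correct, and it reaches \cref{eq:time_phase1_case2,eq:xIyI_phase1_case2} by a genuinely different route than the paper. The paper works in the inertial frame: it defines $I$ as the leftmost intersection of the $\DL$ with the observation disk (substituting \cref{eq:decisionLineY} into \cref{eq:observationCircleE} and taking the minus root of the resulting quadratic), then argues that $I$, being rigidly attached to the disk, is in effect an agent moving with speed $\alpha$ that lies on the $\DL$, so by \Cref{def:DL} the Apollonius circle with foci $I$ and $T$ is tangent to the target's path and $I$ intercepts the target at the tangent point; the contact bearing $\lambda_{TO,2}=\tan^{-1}\bigl(\tfrac{x_O-x_I}{y_O-y_I}\bigr)$ then feeds \cref{eq:observationTime}. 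You instead work in the target-fixed frame, where Phase-I is a ray from $(x_O,y_O)$ with direction in the cone of half-angle $\sin^{-1}\alpha$ about $-\hat{y}$, and contact is the first hit of the circle $x^2+y^2=R^2$; your ``parallel-line identity'' is precisely why the two computations agree --- the squared-distance polynomial for the ray through $(x_O,y_O)$ with direction $(-\alpha,-\sqrt{1-\alpha^2})$ meeting the circle at $T$ and the polynomial for $\DL$ meeting the disk at $O$ are literally the same expression term by term, so your smaller root $s^\ast$ reproduces the paper's $I$, and $\cos\lambda_{TO,2}=(y_O-y_I)/R$ follows. What your route buys is a more explicit optimality argument: $y$ restricted to the circle has its unique maximum at the top point, which lies in the velocity cone exactly when $W$ is on or above the $\DL$ (\Cref{thm:B3}), so in $\mathscr{B}_2$ the maximal contact-$y$ must occur where the circle crosses a bounding ray; with $x_O>0$ only the left ray intersects (guaranteed by \Cref{thm:nonZero} through your identity), and the first hit along it is the higher-$y$ root since the ray descends. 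The paper leaves this optimality largely implicit in \Cref{lma:Observation} and the $\DL$ construction, so your boundary argument, once the ``brief check'' is written out as above, is arguably more self-contained. Two cosmetic slips worth fixing: only the $(-\alpha,-\sqrt{1-\alpha^2})$ bounding direction has slope $+m_{\text{DL}}$ (the other has slope $-m_{\text{DL}}$ and is not parallel to the $\DL$), and the optimal ray need not lie \emph{above} the $\DL$ --- in Scenario B ($x_O=5$, $y_O=2$, $m_{\text{DL}}=0.75$) it lies below --- which is harmless because your argument never actually uses that claim, only that the ray meets the circle.
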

	\begin{proof}
		If $\mathbf{x} \in \mathscr{B}_2$ then the DL must intersect the observation disk at two points (from \Cref{thm:nonZero}) and one or both of these intersections has an $x$-coordinate less than that of the observer, $O$ (from \Cref{thm:B2}).
		Let the point $I$ be defined as the intersection of the DL with the observation disk with the smaller $x$-coordinate (c.f.\ \cref{fig:overallGeometry}).
		The coordinates of $I$ can be found by computing the intersection of the DL with the observation disk.
		The equations describing the DL and observation disk can be written, respectively, as
		
		\begin{equation}
			y = \tfrac{\sqrt{1 - \alpha^2}}{\alpha} x  \label{eq:decisionLineY}
		\end{equation}
		\begin{equation}
			\left( x - x_O \right)^2 + \left( y - y_O \right) ^2 = R^2. 
			\label{eq:observationCircleE}
		\end{equation}
		
		%
		% \begin{gather*}
			% 	y = \cfrac{\sqrt{1 - \alpha^2}}{\alpha} x \\
			% 	\left( x - x_O \right)^2 + \left( y - y_O \right) ^2 = R^2.
			% \end{gather*}
		%
		Substituting \cref{eq:decisionLineY} into \cref{eq:observationCircleE} yields a quadratic equation in $x$ corresponding to the two intersections of the DL and observation disk. The point $I$ has been defined as the left-most intersection, and thus the $-$ case of the quadratic equation is taken and the result is the $x_I$ in \cref{eq:xIyI_phase1_case2}. Then $y_I$ is found by substituting $x_I$ into the DL equation, \cref{eq:decisionLineY}.
		
		The point $I$ remains fixed w.r.t. \ the observer as it moves from station 1 $(t=t_1)$ to station 2 $(t=t_2)$.
		Therefore, the point $I$ may be considered to be akin to an agent moving with speed $\alpha$ (relative to the speed of the target).
		By \Cref{def:DL}, since $I$ lies on the DL then the Apollonius circle whose foci are $I$ and $T$ and whose speed ratio is $\alpha$ is tangent to the target's path.
		Therefore, the point $I$ can ``intercept'' the target at this tangent point.
		Because $I$ remains fixed relative to $O$ the bearing angle of the observer w.r.t.\ the target at station 2, where contact is initiated, (i.e., the point $L$ in \Cref{fig:overallGeometry}) is given by 	%
		\begin{equation*}
			\lambda_{TO,2} = \tan^{-1}\begin{pmatrix} \tfrac{x_O - x_I}{y_O - y_I}\end{pmatrix}.%\atan(x_O - x_I,\ y_O - y_I),
		\end{equation*}
		%
		%where $\atan$ is the two-argument version of the arctangent.
		Finally, the associated observation time in the subsequent phase is found by substituting $x_I$ and $y_I$ into the above equation and then into \cref{eq:observationTime}, yielding \cref{eq:time_phase1_case2}.
	\end{proof}
	
	\begin{lemma}
		\label{lma:heading_phase1_case2}
		If $\mathbf{x} \in \mathscr{B}_2$, the optimal observer heading is
		\begin{equation}
			\label{eq:heading_phase1_case2}
			\psi^*_{O,1} = \cos^{-1}\alpha + \tfrac{\pi}{2}.
		\end{equation}
	\end{lemma}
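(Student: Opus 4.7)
The plan is to derive the optimal heading by exploiting the ``virtual interception'' picture set up in \Cref{lma:observation_phase1_case2}: that $I$ remains fixed relative to $O$ during Phase-I and meets the target at $t_2$. First I would transfer to the target-fixed reference frame (which translates at unit speed along $+\hat{y}$), in which the DL is stationary and, by \Cref{lma:DL}, is oriented along the direction $(\alpha, \sqrt{1-\alpha^2})$.

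In this frame the observer's velocity is $(\alpha\cos\psi_{O,1}, \alpha\sin\psi_{O,1} - 1)$. Because the observation disk translates rigidly with $O$, the intersection $I$ of the disk with the DL is invariant relative to $O$ if and only if $O$'s trajectory in the target-fixed frame lies along the DL itself. I would therefore impose this parallelism condition, reducing it to the single scalar equation
\[
\alpha \sin\psi_{O,1} - \sqrt{1-\alpha^2}\cos\psi_{O,1} = 1.
\]
Using $\alpha = \cos\theta_{\DL}$ together with the sine-difference identity collapses the left-hand side to $\sin(\psi_{O,1} - \theta_{\DL})$, and solving this trigonometric equation yields $\psi_{O,1}^* = \theta_{\DL} + \tfrac{\pi}{2} = \cos^{-1}\alpha + \tfrac{\pi}{2}$.

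The hard part is justifying why ``align $O$'s motion with the DL in the target-fixed frame'' is indeed optimal rather than merely feasible. By \cref{eq:phase1max3} the Phase-I objective reduces to minimizing $|\lambda_{TO,2}|$, so the argument I would give is that any candidate contact point on the observation disk with a smaller $|\lambda_{TO,2}|$ would require the corresponding pre-image point on the disk to coincide with the target at $t_2$, yet lie strictly below the DL in the target-fixed frame---infeasible by \Cref{lma:DL}. Thus the leftmost DL-disk intersection $I$, furnished by \Cref{thm:B2}, realizes the infimum of $|\lambda_{TO,2}|$ over admissible straight-line trajectories, and the heading derived above is the unique maneuver that implements it.
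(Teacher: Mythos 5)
Your proposal is correct and reaches \cref{eq:heading_phase1_case2} by a genuinely different route than the paper. The paper works in the inertial frame: it takes the tangent point $L$ of the Apollonius circle with foci $I$ and $T$ (guaranteed by \Cref{def:DL}), applies the Law of Cosines twice in $\triangle TIL$ --- first to obtain $\overline{TL} = \sqrt{x_I^2 + y_I^2}/\sqrt{1-\alpha^2}$, then to conclude $\cos(\angle TIL) = 0$ --- and reads the heading off as the direction of $\overrightarrow{IL}$, perpendicular to the $\DL$. Your target-fixed-frame argument replaces all of that with a one-line collision-course condition: requiring the relative velocity $(\alpha\cos\psi_{O,1},\, \alpha\sin\psi_{O,1}-1)$ to be aligned with the $\DL$ direction $(\alpha, \sqrt{1-\alpha^2})$ gives a cross product equal to $\alpha\left(1 - \sin(\psi_{O,1} - \theta_\DL)\right) \geq 0$, which vanishes only at $\psi_{O,1} = \theta_\DL + \tfrac{\pi}{2}$; note this also settles, automatically, the sign question you rely on implicitly --- the unique ``parallel'' heading is in fact the \emph{anti}-parallel one, sliding $I$ down the $\DL$ into the stationary target, so it is an approach course rather than a retreat. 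One wording slip: ``$I$ is invariant relative to $O$ iff $O$'s relative trajectory lies along the $\DL$'' is imprecise, since $I$ as a material point of the disk is trivially fixed w.r.t.\ $O$ for every heading; what the parallelism condition actually guarantees is that this material point stays coincident with the moving geometric intersection of the disk and the $\DL$, equivalently that $I$'s relative trajectory passes through the target --- but the scalar equation you impose is exactly the right one, so this is presentational, not a gap. Your optimality argument is, if anything, more explicit than the paper's: the paper's proof of this lemma only computes the interception heading and leaves the optimality of contacting at $I$ to the surrounding development (\cref{eq:phase1max3}, \Cref{lma:observation_phase1_case2,thm:B2}), whereas you note that any contact bearing with $|\lambda_{TO,2}|$ smaller than that of $I$ has its material pre-image on the sub-$\DL$ arc through $W$, which by \Cref{lma:DL} can never coincide with the target; for full rigor you could add that, since the $\DL$ has positive slope (for $x_O > 0$), the leftmost intersection sits lower on the disk than the right one and therefore is the feasible point of smallest bearing magnitude. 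The trade-off: the paper's longer geometric route produces $\overline{TL}$ in \cref{eq:distanceApproach} as a by-product, which is reused as the Phase-I approach time in \Cref{thm:solution_phase2}, while your derivation is shorter and explains \emph{why} the answer is $\theta_\DL + \tfrac{\pi}{2}$ --- the $\DL$ direction is the extreme feasible direction of relative motion, attained only by the heading perpendicular to it.
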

	\begin{proof}
		This proof makes extensive use of the geometry depicted in \Cref{fig:overallGeometry}.
		Recall that the point $I$ is defined as the leftmost intersection of the DL with the observation disk.
		Because the point $I$ is fixed w.r.t.\ the observer, it moves at speed $\alpha$ relative to the target.
		Also, from \Cref{def:DL}, the Apollonius circle whose foci are $I$ and $T$ must be tangent to target's path.
		Let $L$ be this tangent point.
		Then, by the definition of an Apollonius circle it must be that $\alpha \overline{TL} = \overline{IL}$.
		Now consider the triangle $\triangle TIL$; essentially, two sides and the angle $\angle LTI$ are known.
		The associated Law of Cosines is
		\begin{equation*}
			\overline{IL}^2 = \overline{TL}^2 + \left( x_I^2 + y_I^2 \right) - 2 \overline{TL} \sqrt{x_I^2 + y_I^2} \cos\left( \tfrac{\pi}{2} - \cos^{-1} \alpha \right)  .
		\end{equation*}
		Substituting in $\cos(\tfrac{\pi}{2} - \cos^{-1}\alpha) = \sqrt{1 - \alpha^2}$ along with $\overline{IL} = \alpha \overline{TL}$ yields
		\begin{equation}
			\overline{TL} = \tfrac{\sqrt{x_I^2 + y_I^2}}{\sqrt{1 - \alpha^2}},
			\label{eq:distanceApproach}
		\end{equation}
		and thus all 3 sides of the triangle $\triangle TIL$ are known.
		Now the Law of Cosines may be used again to determine the angle $\angle TIL$:
		\begin{equation*}
			\overline{TL}^2 = \overline{IL}^2 + \left( x_I^2 + y_I^2 \right) - 2 \overline{IL} \sqrt{x_I^2 + y_I^2} \cos \left(\angle TIL \right).
		\end{equation*}
		Substituting in  $\overline{IL} = \alpha \overline{TL}$ and the expression for $\overline{TL}$ yields $\cos\left( \angle TIL\right) = 0$ and therefore $\angle TIL = \tfrac{\pi}{2}$.
		Finally, the angle that $\overrightarrow{IL}$ makes with the positive $\hat{x}$-axis is given by \cref{eq:heading_phase1_case2}.
	\end{proof}
	
	\subsubsection{Case 3}
	
	\begin{figure}[htpb]
		\centering
		\includegraphics{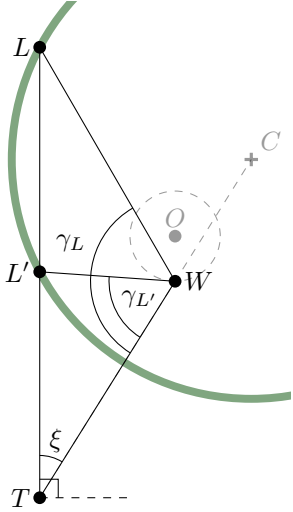}
		\vspace{-1em}
		\caption{Geometry for Case 3 wherein the lowest point on the observation disk, $W$, is above the DL resulting in two intersections of the $\hat{y}$-axis with the associated Apollonius circle.}
		\label{fig:nonuniqueHeading}
	\end{figure}
	
	Finally, consider $T_2$ in \Cref{fig:phaseDiagram}.
	The point $W$ (i.e., the lowest point of the observation disk) is on the decision line, which implies that $\mathbf{x}_2 \in \mathscr{B}_3$.
	From \Cref{def:B3} the optimal observation time must be the maximum possible observation time, $\bar{t}_{\text{obs}} = \tfrac{2R}{1 - \alpha}$.
	
	When $W$ is above the DL, then the Apollonius circle whose foci are $W$ and $T$ (with associated speed ratio $\alpha$) intersects the positive $\hat{y}$-axis twice.
	By definition, the point $W$ (moving with relative speed $\alpha$) can be moved onto any of the points along the positive $\hat{y}$-axis that are inside this Apollonius circle before the target arrives at that point.
	All of these possibilities result in $\lambda_{TO,2} = 0$ which corresponds to the maximum observation time.
	Therefore, all of these possibilities are equally optimal and thus the optimal heading for the observer is non-unique in this case.
	
	\begin{lemma}
		\label{lma:heading_phase1_case3}
		If $\mathbf{x} \in \mathscr{B}_3$, any (constant) observer heading in the range
		\begin{flalign}
			\label{eq:heading_phase1_case3}
			\psi_{O,1}^* &\in 
			\left[ \tfrac{\pi}{2}-\hspace{-0.1em}\xi+\hspace{-0.1em}\sin^{-1}\hspace{-0.3em}\left( \tfrac{\sin \xi}{\alpha} \right), 
			\tfrac{3\pi}{2} - \hspace{-0.1em}\xi - \hspace{-0.1em}\sin^{-1}\hspace{-0.3em}\left( \tfrac{\sin \xi}{\alpha} \right) \right]\hfill 
		\end{flalign}
		where
		\begin{equation}
			\label{eq:xi}
			\begin{split}
				\xi &= \sin^{-1} \begin{pmatrix} \tfrac{x_W}{\sqrt{x_W^2 + y_W^2} } 
				\end{pmatrix} = 
				\sin^{-1} \begin{pmatrix} \tfrac{x_O}{\sqrt{x_O^2 + \left( y_O - R \right)^2}} \end{pmatrix}
			\end{split}
		\end{equation}
		is optimal.
	\end{lemma}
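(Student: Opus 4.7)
The plan is to reduce the question to a classical Apollonius-circle interception, then invoke the law of sines to read off the two extreme intercept headings and appeal to continuity to sweep out the full interval. By \Cref{lma:Observation} together with \Cref{def:B3}, the hypothesis $\mathbf{x} \in \mathscr{B}_3$ forces any optimal Phase-I heading to produce $\lambda_{TO,2}=0$ at the contact instant $t_2$, which by \cref{eq:anglePhase2} is equivalent to $x_O(t_2)=0$ together with the target sitting exactly $R$ below $O$. In other words, $W(t_2)$ must coincide with the target on the positive $\hat{y}$-axis. Placing $T$ at the origin (WLOG by vertical translation), the feasibility of such an interception by $W$, which moves at relative speed $\alpha$, is precisely governed by the Apollonius circle Apol$(W,T;\alpha)$, whose intersection with the positive $\hat{y}$-axis, under the standing hypothesis that $W$ lies strictly above the DL (cf.\ \Cref{thm:B3}), is a non-degenerate chord with endpoints $L_1$ and $L_2$.

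Next I would compute the two extreme optimal headings using the triangle $\triangle TWL_i$. The angle at $T$ between $\overrightarrow{TW}$ and the positive $\hat{y}$-axis (along which $\overrightarrow{TL_i}$ lies) is exactly $\xi$ as defined in \cref{eq:xi}, and the Apollonius condition gives $\overline{WL_i}=\alpha\,\overline{TL_i}$. Applying the law of sines in $\triangle TWL_i$ then yields
\begin{equation*}
    \sin \angle TWL_i \;=\; \frac{\sin\xi}{\alpha},
\end{equation*}
whose two solutions in $[0,\pi]$, namely $\beta_{\min}=\sin^{-1}(\sin\xi/\alpha)$ and $\beta_{\max}=\pi-\beta_{\min}$, correspond respectively to the lower and upper chord endpoints.

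The last step is to convert each $\beta\in\{\beta_{\min},\beta_{\max}\}$ into an absolute heading. The vector $\overrightarrow{TW}$ makes angle $\pi/2-\xi$ with the positive $\hat{x}$-axis, so $\overrightarrow{WT}$ makes angle $3\pi/2-\xi$. Because $W$ has $x_W>0$ while each $L_i$ lies on the $\hat{y}$-axis, the rotation from $\overrightarrow{WT}$ to $\overrightarrow{WL_i}$ is clockwise by the interior angle $\beta=\angle TWL_i$, giving $\psi_{O,1}=3\pi/2-\xi-\beta$. Substituting $\beta_{\min}$ and $\beta_{\max}$ and using the identity $3\pi/2-\xi-\beta_{\max}=\pi/2-\xi+\beta_{\min}$ recovers the two endpoints of \cref{eq:heading_phase1_case3}; by continuity of the intercept-point-to-heading map, as the intercept point sweeps the chord from $L_1$ to $L_2$ the heading sweeps out the entire closed interval.

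The main obstacle will be the orientation bookkeeping in the last step: justifying the clockwise sense of rotation (rather than counter-clockwise) and checking that the endpoints of the heading interval match $\beta_{\min}$ and $\beta_{\max}$ in the order dictated by the lemma's formula. A useful sanity check is the degenerate limit $\sin\xi\to\alpha$ (i.e., $W$ approaches the DL), in which $\beta_{\min}=\beta_{\max}=\pi/2$ and both interval endpoints collapse to $\pi-\xi=\pi/2+\cos^{-1}\alpha$, exactly matching the Case-2 heading in \cref{eq:heading_phase1_case2} evaluated at the $\mathscr{B}_2/\mathscr{B}_3$ boundary where the DL passes through $W$.
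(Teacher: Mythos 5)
Your proposal is correct and follows essentially the same route as the paper's own proof: both intersect the Apollonius circle $\mathrm{Apol}(W,T;\alpha)$ with the target's path to obtain the chord $\overline{LL'}$, apply the Law of Sines in the triangles at $W$ to get $\sin\beta = \sin\xi/\alpha$ with the two supplementary solutions yielding the interval endpoints, and invoke continuity (the paper phrases this as ``any heading between these will reach a point on $\overline{LL'}$'') to cover the whole interval. Your explicit clockwise-rotation bookkeeping $\psi_{O,1} = \tfrac{3\pi}{2}-\xi-\beta$ and the boundary sanity check against \cref{eq:heading_phase1_case2} are welcome refinements of the paper's terser ``accounting for the proper quadrant'' step, but do not constitute a different argument.
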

	\begin{proof}
		Consider the general configuration for Case 3 given in \Cref{fig:nonuniqueHeading}.
		It is clear that all points along the line segment $\overline{LL'}$ lie inside the associated Apollonius circle, where $L$ and $L'$ are the two intersections of the Apollonius circle with the $\hat{y}$-axis.
		Thus, it suffices to compute the headings associated with moving the point $W$ to $L'$ and to $L$, respectively, as any heading between these will reach a point on $\overline{LL'}$ thereby achieving $\lambda_{TO,2} = 0$, giving the maximum observation time.
		First consider the triangle $\triangle TL'W$.
		From the Law of Sines and the definition of the Apollonius circle, it must be that
		\begin{equation*}
			\tfrac{\sin \gamma_{L'}}{\overline{TL'}} = \tfrac{\sin \xi}{\overline{WL'}} = \tfrac{\sin \xi}{\alpha \overline{TL'}}
		\end{equation*}
		giving $\sin \gamma_{L'} = \tfrac{\sin \xi}{\alpha}$.
		A similar relationship for the triangle $\triangle TLW$ gives $\sin \gamma_L = \tfrac{\sin\xi}{\alpha}$ as well.
		Thus it must be that $\gamma_{L'} < \tfrac{\pi}{2} < \gamma_L$.
		The associated headings can be written as $\tfrac{\pi}{2} - \xi + (\pi - \gamma_L)$ and $\tfrac{\pi}{2} - \xi + (\pi - \gamma_{L'})$.
		Taking the arcsin of the $\sin\gamma$ terms, accounting for the proper quadrant, and substituting into this expression yields \cref{eq:heading_phase1_case3}.
	\end{proof}
	
	\begin{lemma}
		\label{lma:approachTime3}
		If $\mathbf{x} \in \mathscr{B}_3$ then the time it takes for the observer to approach the target (time of Phase-I) is $\tapr = \tfrac{y_O - R}{1 - \alpha \sin \psi_{O,1}}$. Where the feasible domain of the observer headings, $\psi_{O,1}$, are as defined in \cref{eq:heading_phase1_case3} in \Cref{lma:heading_phase1_case3}.
	\end{lemma}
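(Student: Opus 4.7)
The strategy is straightforward: in Phase-I the observer travels in a straight line (since the costates are constant, as established earlier in this section), so the observer's position is an affine function of time. The approach time $\tapr$ is determined by the terminal condition that ends Phase-I. For $\mathbf{x} \in \mathscr{B}_3$, the optimal heading (per \Cref{lma:heading_phase1_case3}) drives the observer to a configuration in which the target enters the observation disk at its lowest point $W$, giving $\lambda_{TO,2}=0$. Hence Phase-I ends at the instant when $W$ coincides with the target.

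The execution proceeds in three short steps. First, record the closed-form trajectories during Phase-I: with $t_1=0$ and the convention that the target's initial $y$-coordinate is zero (so that the DL passes through the origin, consistent with \Cref{lma:DL}), the observer satisfies $x_O(t) = x_O + \alpha t \cos \psi_{O,1}$ and $y_O(t) = y_O + \alpha t \sin \psi_{O,1}$, while $y_T(t) = t$. Second, write the terminal condition $W(\tapr) = (0, y_T(\tapr))$. Only the $y$-component is needed for the claim:
\begin{equation*}
  y_O + \alpha \tapr \sin \psi_{O,1} - R = \tapr.
\end{equation*}
Third, collect the $\tapr$ terms and solve:
\begin{equation*}
  \tapr \left( 1 - \alpha \sin \psi_{O,1} \right) = y_O - R \quad \Longrightarrow \quad \tapr = \tfrac{y_O - R}{1 - \alpha \sin \psi_{O,1}}.
\end{equation*}

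The only point worth verifying beyond the algebra is well-posedness. The denominator is strictly positive because $\alpha \in (0,1)$ and $|\sin \psi_{O,1}| \le 1$ force $1 - \alpha \sin \psi_{O,1} \ge 1 - \alpha > 0$, so a finite positive $\tapr$ always results. The numerator is positive because membership in $\mathscr{B}_3$ implies the observer's disk does not yet contain the target and lies above it by more than $R$; if this failed, Phase-I would be vacuous. I do not anticipate a real obstacle here: the conceptual content is recognizing that \Cref{lma:heading_phase1_case3} pins down the Phase-I terminal geometry, after which the result follows from a one-line kinematic identity. The $x$-component of the terminal condition, $x_O + \alpha \tapr \cos \psi_{O,1} = 0$, is what implicitly restricts the admissible headings to the interval in \cref{eq:heading_phase1_case3}, but it plays no role in solving for $\tapr$ itself.
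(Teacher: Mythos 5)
Your proposal is correct and takes essentially the same route as the paper: your $y$-component equation $y_O - R + \alpha \tapr \sin\psi_{O,1} = \tapr$ is precisely the paper's triangle relation $\overline{TL''} = (y_O - R) + \overline{WL''}\sin(\pi - \psi_{O,1})$ combined with its Apollonius speed-ratio identity $\overline{WL''} = \alpha\overline{TL''}$, merely phrased as a direct kinematic projection onto the target's path rather than via $\triangle TWL''$. Both arguments rest on the same premise that Phase-I ends with the target coinciding with $W$, so your additional well-posedness remarks (positivity of the denominator since $1 - \alpha\sin\psi_{O,1} \geq 1 - \alpha > 0$, and of the numerator from the $\mathscr{B}_3$ geometry) are harmless extras not present in, but consistent with, the paper's proof.
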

	
	\begin{proof}
		Because the state $\mathbf{x} \in \mathscr{B}_3$, the point $W = (x_O,y_O-R)$ is above the DL by \Cref{thm:B3}; seen in \cref{fig:approachPhase}. Let the point $L''$ be where the point W on the observation disk reaches the target's path as the observer takes one of the optimal headings in the interval described by \Cref{lma:heading_phase1_case3}. Let $T$ be the target's position and its trajectory is aligned with the $\hat{y}$-axis. 
		\begin{figure}[htpb]
			\vspace{-3.3em}
			\centering
			\includegraphics{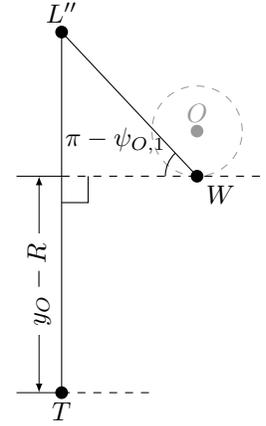}
			\vspace{-1em}
			\caption{Approach time derivation for when $\mathbf{x} \in \mathscr{B}_3$. }
			\label{fig:approachPhase}
		\end{figure}
		Consider the $\triangle TWL''$. The time that occurs until the observer reaches the target is $\tapr = \overline{TL''}/v_T$. Recall, that the speed of the target is unity by definition in \cref{eq:dynamics}. This means that the time in approach is $\tapr = \overline{TL''}$. The following can be written about $\overline{TL''}$,
		\begin{equation}
			\tapr = \overline{TL''} = (y_O-R) + \overline{WL''} \sin (\pi-\psi_{O,1}).
			\label{eq:approachTime1}
		\end{equation}
		By \Cref{lma:heading_phase1_case3} the Apollonius circle whose foci are $T$ and $W$ and speed ratio is $\alpha$ dictates the interception headings for the observer $\psi_{O,1}$ in \cref{eq:heading_phase1_case3}. Inspecting the Apollonius circle, 
		\begin{equation}
			\alpha \overline{TL''} = \overline{WL''}.
			\label{eq:speedRatio2}
		\end{equation}
		Substituting the speed ratio from \cref{eq:speedRatio2} into \cref{eq:approachTime1},
		\begin{equation}
			\overline{TL''} = (y_O-R) + \alpha \overline{TL''} \sin (\pi-\psi_{O,1}).
			\label{eq:approachTime2}
		\end{equation}
		Solving \cref{eq:approachTime2} for $\overline{TL''}$ yields,
		\begin{equation}
			\tapr = \overline{TL''} = \tfrac{y_O-R}{1-\alpha\sin \psi_{O,1}}.
			\qedhere
			\label{eq:approachTime3}
		\end{equation}
	\end{proof}
	
	\section{Complete Solution} \label{sec:CompleteSolution}
	
	Using the results of the previous Lemmas, the full solution of the Phase-I optimal control problem is summarized in the following.
	
	\begin{theorem}
		\label{thm:solution_phase1}
		The optimal heading for Phase-I that maximizes the observation time in the subsequent Phase-II, which begins once the target makes contact with the observation disk of the observer, and associated observation time is given by
		%
		% \begin{equation}
			% \label{eq:solution_phase1}
			% \psi_{O,1}^*,\ \tobs  =
			% \begin{cases}
				% 	\rlap{\hspace{.25cm}\text{undefined}}{\hphantom{\cos^{-1}\alpha + \tfrac{\pi}{2}},}\ \ \hspace{1cm}0 & \text{ if } \mathbf{x} \in \mathscr{B}_1, \\
				% 	\cos^{-1}\alpha + \tfrac{\pi}{2},\ \ \tfrac{2R\left( \alpha + \tfrac{y_O - y_I}{R} \right)}{1 - \alpha^2} & \text { if } \mathbf{x} \in \mathscr{B}_2, \\
				% 	\rlap{\hspace{.6cm}\text{\cref{eq:heading_phase1_case3}},}{\hphantom{\cos^{-1}\alpha + \tfrac{\pi}{2}},}\ \ \hspace{.9cm}\bar{t}_{\text{obs}} & \text{ otherwise,}
				% \end{cases}
			% \end{equation}
		\begin{equation}
			\begin{split}
				\label{eq:solution_phase1}
				&\left(\psi_{O,1}^*,\ \tobs \right) = \\
				&\quad \begin{cases}
					\left( \mathrm{undef.}, 0\right) & \mathbf{x} \in \mathscr{B}_1\\
					\left( \cos^{-1} \alpha + \tfrac{\pi}{2}, \tfrac{2R}{1-\alpha} \left( \alpha + \tfrac{y_O - y_I}{R} \right) \right)  & \mathbf{x} \in \mathscr{B}_2\\
					\left( \mathrm{Eq.} \ \cref{eq:heading_phase1_case3}, \bar{t}_{\text{obs}} \right) & \mathrm{otherwise}
				\end{cases}
			\end{split}
		\end{equation} 
		where $y_I$ is given in \cref{eq:xIyI_phase1_case2}, $\bar{t}_{\text{obs}}$ is given in \cref{eq:maxObservation}.
	\end{theorem}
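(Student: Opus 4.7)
The plan is to treat \Cref{thm:solution_phase1} as a consolidation of the preceding lemmas and prove it by a case split over the regions $\mathscr{B}_1, \mathscr{B}_2, \mathscr{B}_3$. The first step is to verify completeness: these three regions partition the state space. This follows immediately from \Cref{def:B2}, which defines $\mathscr{B}_2 = (\mathscr{B}_1 \cup \mathscr{B}_3)'$, together with the observation that $\mathscr{B}_1$ and $\mathscr{B}_3$ are disjoint since the conditions $\tobs = 0$ and $\tobs = \tfrac{2R}{1-\alpha} > 0$ are incompatible. Hence every $\mathbf{x} \in \mathbb{R}^3$ lies in exactly one of the three cases, so it suffices to verify the claim region by region.

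For $\mathbf{x} \in \mathscr{B}_1$, \Cref{def:B1} yields $\tobs = 0$ directly; since no heading can produce positive observation time, the optimal control is undefined (any $\psi_{O,1}$ is vacuously optimal), matching the first branch of \cref{eq:solution_phase1}. For $\mathbf{x} \in \mathscr{B}_2$, \Cref{lma:heading_phase1_case2} supplies the optimal heading $\psi_{O,1}^* = \cos^{-1}\alpha + \tfrac{\pi}{2}$, while \Cref{lma:observation_phase1_case2} supplies the associated observation time, with $y_I$ as given in \cref{eq:xIyI_phase1_case2}, recovering the second branch. For $\mathbf{x} \in \mathscr{B}_3$, \Cref{lma:heading_phase1_case3} provides the non-unique family of optimal headings in \cref{eq:heading_phase1_case3}, and \Cref{def:B3} together with \Cref{lma:maxPossible} gives $\tobs = \bar{t}_{\text{obs}} = \tfrac{2R}{1-\alpha}$, recovering the third branch.

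There is essentially no technical obstacle here, since all the heavy lifting has been done in \Cref{lma:heading_phase1_case2,lma:observation_phase1_case2,lma:heading_phase1_case3,lma:maxPossible}; the proof is organizational. The only point requiring care is to confirm that the ``if $\mathbf{x} \in \mathscr{B}_k$'' hypothesis of each lemma aligns with the corresponding case of the theorem, and that the expressions for $y_I$, $\bar{t}_{\text{obs}}$, and the heading set \cref{eq:heading_phase1_case3} are merely referenced rather than rederived. One subtlety worth flagging is ensuring the prefactor in the $\mathscr{B}_2$ observation time in \cref{eq:solution_phase1} is consistent with \cref{eq:time_phase1_case2} before substitution, so that the case statement in the theorem is a verbatim assembly of the lemma conclusions.
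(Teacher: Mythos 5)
Your proposal is correct and takes essentially the same route as the paper, whose proof likewise dispatches the theorem in one line by citing \Cref{thm:B3,thm:B1,thm:B2,thm:nonZero} for determining region membership and \Cref{lma:observation_phase1_case2,lma:heading_phase1_case2,lma:heading_phase1_case3} for the associated headings and observation times. The prefactor subtlety you flag is real and worth the attention: \Cref{lma:observation_phase1_case2} gives $\tfrac{2R}{1-\alpha^2}\left(\alpha + \tfrac{y_O - y_I}{R}\right)$ in \cref{eq:time_phase1_case2}, whereas the $\mathscr{B}_2$ branch of \cref{eq:solution_phase1} prints $\tfrac{2R}{1-\alpha}$, an inconsistency in the paper itself; the lemma's factor is the one consistent with \cref{eq:observationTime}, so the theorem statement carries a typo rather than your proof a gap.
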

	\begin{proof}
		The region in which the state lies is determined via \Cref{thm:B3,thm:B1,thm:B2,thm:nonZero}, and the associated optimal heading and observation times are given by \Cref{lma:observation_phase1_case2,lma:heading_phase1_case2,lma:heading_phase1_case3}.
	\end{proof}
	
	\begin{theorem}
		\label{thm:solution_phase2}
		The optimal heading for Phase-II that maximizes the observation time and the approach time it takes for the observer to reach the target in Phase-I is 
		\begin{equation}
			\label{eq:solution_phase2b}
			\left(\psi_{O,2}^*, \tapr \right) = \begin{cases}
				\left(\mathrm{undef.},\; \infty \right)& \text{if} \;\mathbf{x} \in \mathscr{B}_1\\
				\left(\mathrm{Eq. } \ \cref{eq:solution_phase2}, 
				\tfrac{\sqrt{x_I^2 + y_I^2}}{\sqrt{1 - \alpha^2}}  \right) & \text{if}\; \mathbf{x} \in \mathscr{B}_2\\
				\left( \tfrac{\pi}{2}, \tfrac{y_O-R}{1-\alpha\sin\psi_{O,1}} \right) & \mathrm{otherwise}
			\end{cases}
		\end{equation}
		where in \cref{eq:solution_phase2},   $\lambda_{TO,2} = \tan^{-1}\begin{pmatrix} \tfrac{x_O - x_I}{y_O - y_I}\end{pmatrix}$, and in \cref{eq:xIyI_phase1_case2}, $x_I$ and $y_I$ are provided as a function of the initial state when $\mathbf{x} \in \mathscr{B}_2$.
	\end{theorem}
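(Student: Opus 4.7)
The plan is to proceed by case analysis on the partition $\{\mathscr{B}_1, \mathscr{B}_2, \mathscr{B}_3\}$ of the state space, drawing together the geometric characterizations of these regions with the Phase-II closed-form heading of \Cref{lma:OptimalHeadingII}. The key observation is that in each case the bearing $\lambda_{TO,2}$ at which contact is initiated is already determined by the Phase-I analysis, so the Phase-II heading follows by substitution into \cref{eq:solution_phase2}, and the approach time can be read off directly from the same triangle used to derive the contact geometry.

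First I would dispatch $\mathbf{x} \in \mathscr{B}_1$: by \Cref{thm:B1} and \Cref{def:B1} the observer cannot reach the target's path, so Phase-II never begins and $\psi_{O,2}^*$ is undefined; since the target escapes forever beyond the disk, the approach time must be taken as $\infty$. Next, for $\mathbf{x} \in \mathscr{B}_3$, the Phase-I strategy of \Cref{lma:heading_phase1_case3} drives the point $W$ at the bottom of the observation disk onto the target's path, so contact occurs with $\lambda_{TO,2}=0$; substituting this into the formula of \Cref{lma:OptimalHeadingII} gives $\psi_{O,2}^* = \cos^{-1}(0)=\tfrac{\pi}{2}$, and the approach time is precisely the expression obtained in \Cref{lma:approachTime3}.

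The main piece of work is $\mathbf{x} \in \mathscr{B}_2$. Here I would invoke \Cref{lma:observation_phase1_case2,lma:heading_phase1_case2} to identify the fixed point $I$ with coordinates given in \cref{eq:xIyI_phase1_case2}, which travels with the observer and reaches the target's path at $L$. Because $I$ and $L$ are collinear with the contact point on the disk (the relative geometry of $O$ and $I$ is preserved throughout Phase-I), the bearing at which contact begins is $\lambda_{TO,2} = \tan^{-1}\!\bigl((x_O-x_I)/(y_O-y_I)\bigr)$; plugging this into \cref{eq:solution_phase2} yields the stated $\psi_{O,2}^*$. The approach time follows by noting that the target moves with unit speed along the $\hat{y}$-axis and that the contact instant equals $\overline{TL}$, which is precisely the quantity already computed as \cref{eq:distanceApproach} in the proof of \Cref{lma:heading_phase1_case2}, namely $\sqrt{x_I^2+y_I^2}/\sqrt{1-\alpha^2}$.

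The hard part, such as it is, will be verifying that the Phase-II bearing in Case~2 lies in a branch of $\lambda_{TO,2}$ for which \Cref{lma:OptimalHeadingII}'s formula is directly applicable (i.e.\ that $I$ lies on the correct side of the observer so the sign convention matches \cref{fig:overallGeometry}); this is essentially a sign check using \Cref{thm:B2}, which guarantees $x_I < x_O$. Once that bookkeeping is in place, the theorem is a direct assembly of the earlier lemmas, and no new computation is required beyond recognizing that the target's unit speed makes $\tapr$ equal to the Euclidean distance $\overline{TL}$ already evaluated in each case.
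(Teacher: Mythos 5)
Your proposal is correct and follows essentially the same route as the paper's proof: a case split over $\mathscr{B}_1$, $\mathscr{B}_2$, $\mathscr{B}_3$, taking the undefined heading and infinite approach time in $\mathscr{B}_1$ from \Cref{thm:B1}, the heading from \Cref{lma:OptimalHeadingII} with $\lambda_{TO,2}=0$ (hence $\psi_{O,2}^*=\tfrac{\pi}{2}$) in $\mathscr{B}_3$ together with \Cref{lma:approachTime3}, and in $\mathscr{B}_2$ reading the approach time off \cref{eq:distanceApproach} via the target's unit speed. Your extra remark about checking the branch of $\lambda_{TO,2}$ using $x_I<x_O$ from \Cref{thm:B2} is a small bookkeeping point the paper leaves implicit, but it does not change the argument.
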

	
	\begin{proof}
		First, consider the optimal headings for Phase-II. From \Cref{thm:B1}, observation time is zero therefore the associated optimal heading for the observer is undefined for Phase-II.
		By \Cref{lma:OptimalHeadingII}, the optimal heading is as described in \cref{eq:solution_phase2}, this applies when $\mathbf{x} \in \mathscr{B_2} \cup \mathscr{B}_3$. However, when $\mathbf{x} \in \mathscr{B}_3$, the observer reaches the target so that $\lambda_{TO,2} = 0$ by \Cref{lma:maxPossible} and therefore the optimal heading of the observer is $\psi_{O,2}^* = \pi/2$.

		Next, consider the approach time for Phase-I.
		By \Cref{thm:B1}, when $\mathbf{x} \in \mathscr{B}_1$, the observation time is zero, and therefore the approach phase never terminates; this is because the observer can not reach the target. 
		Next, by \Cref{lma:heading_phase1_case2}, when the distance taken by the target before being contacted by the observer is shown in \Cref{eq:distanceApproach}. Dividing this by the speed of the target (unity speed) provides the approach time, $\tapr$, when $\mathbf{x} \in \mathscr{B}_2$.
		Lastly, by \Cref{lma:approachTime3}, the approach time is described by \cref{eq:approachTime3}, when $\mathbf{x} \in \mathscr{B}_3$.
		
	\end{proof}

	% In Phase-II the optimal strategy for the observer is given by \cref{eq:solution_phase2} as proven by \Cref{lma:OptimalHeadingII}. If $\mathbf{x} \in \mathscr{B}_2$, then the time it takes for the observer to reach the target can be obtained by \cref{eq:distanceApproach}. By speed ratio $\alpha$, the speed of the target, $v_T = 1$, and the speed of the observer, $v_O = \alpha$, as described in \cref{eq:dynamics}. The time approaching the target is therefore:
	% \begin{equation*}
		%     \tapr = \tfrac{\sqrt{x_I^2 + y_I^2}}{\sqrt{1 - \alpha^2}}, \; \mathbf{x}\in\mathscr{B}_2.
		% \end{equation*}

	\section{Scenarios} \label{sec:Examples}
	From \Cref{eq:solution_phase2,eq:solution_phase1} there exist optimal strategies for an observer to maximize the amount of time that a faster, non-maneuvering target remain inside its observation disk. To communicate the presented optimal strategies for this two-phase problem, three example scenarios are presented, highlighting interesting aspects surrounding this problem.
	
	The initial conditions of Scenario A are such that $\mathbf{x} \in \mathscr{B}_1$, the initial conditions of Scenario B are such that $\mathbf{x} \in \mathscr{B}_2$, and the initial conditions of Scenario C are such that $\mathbf{x} \in \mathscr{B}_3$.
	
	\begin{table}[H]
		\centering
		\caption{Initial conditions for each of the three examples}
		\begin{tabular}{c c c c c c c c}
			Scenario & $x_O$ & $y_O$ & $y_T$ & $\alpha$ & $R$ &  $\mathbf{x}_0$   \\ \hline \hline
			A & 8.00 & 4.00 & 0.00 & 0.60 & 2.00 & $\mathbf{x}_0 \in \mathscr{B}_1$\\
			B & 5.00 & 2.00 & 0.00 & 0.80 & 2.00 & $\mathbf{x}_0 \in \mathscr{B}_2$\\
			C & 3.00 & 6.00 & 0.00 & 0.70 & 2.00 & $\mathbf{x}_0 \in \mathscr{B}_3$
		\end{tabular}
		\label{tab:Examples}
	\end{table}
	
	\subsection{Scenario A - No Observation}
	\label{ex:A}
	As described by \cref{eq:solution_phase1}, if $\mathbf{x} \in \mathscr{B}_1$ then the optimal strategy is undefined and the observation time is zero. Although, this scenario is not as interesting as the others; it highlights when observation is not possible. 
	
	Consider Scenario A from \Cref{tab:Examples}, the initial state of the system is $\mathbf{x}_0 = (8,4,0)$, the speed of the observer with respect to the target is $\alpha = 0.60$, and the observation range is $R = 2.00$.
	
	The first step is to determine if the DL intersects the observation disk. By substituting \cref{eq:decisionLineY} into \cref{eq:observationCircleE} and solving for $x$, the intersections are obtained from the resulting quadratic equation. The DL as shown in \cref{eq:decisionLineY} for the initial conditions provided is: $y = 1.3333x$. The observation disk (centered at the location of the observer) as provided in \cref{eq:observationCircleE} is $(x-8)^2+(y-4)^2=4$. The observation disk and the DL are shown in \Cref{fig:exampleA}. Solving the two equations and two unknowns results in the following imaginary coordinates: $(x,y) = (4.8000 \pm 2.0785 i,6.4000 \pm 2.7713i)$. In this example, the roots of the quadratic equation are complex and therefore; the DL does not intersect the observation disk.
	
	%Since the DL does not intersect the observation disk it either is above or below the DL. 
	Evaluating the equation for the DL at the $x$-coordinate of the observer: $y = 1.3333(8) = 10.6667$. This location is greater than the $y$-coordinate of the observer: $4.000$. Therefore the observation disk lies below the DL and $\mathbf{x}_0 \in \mathscr{B}_1$. Therefore by \Cref{thm:B1} and \Cref{thm:solution_phase1}, observation is not possible no matter the strategy of the observer.
	
	\begin{figure}[htbp]
		\centering
		\includegraphics[width = 0.8\linewidth]{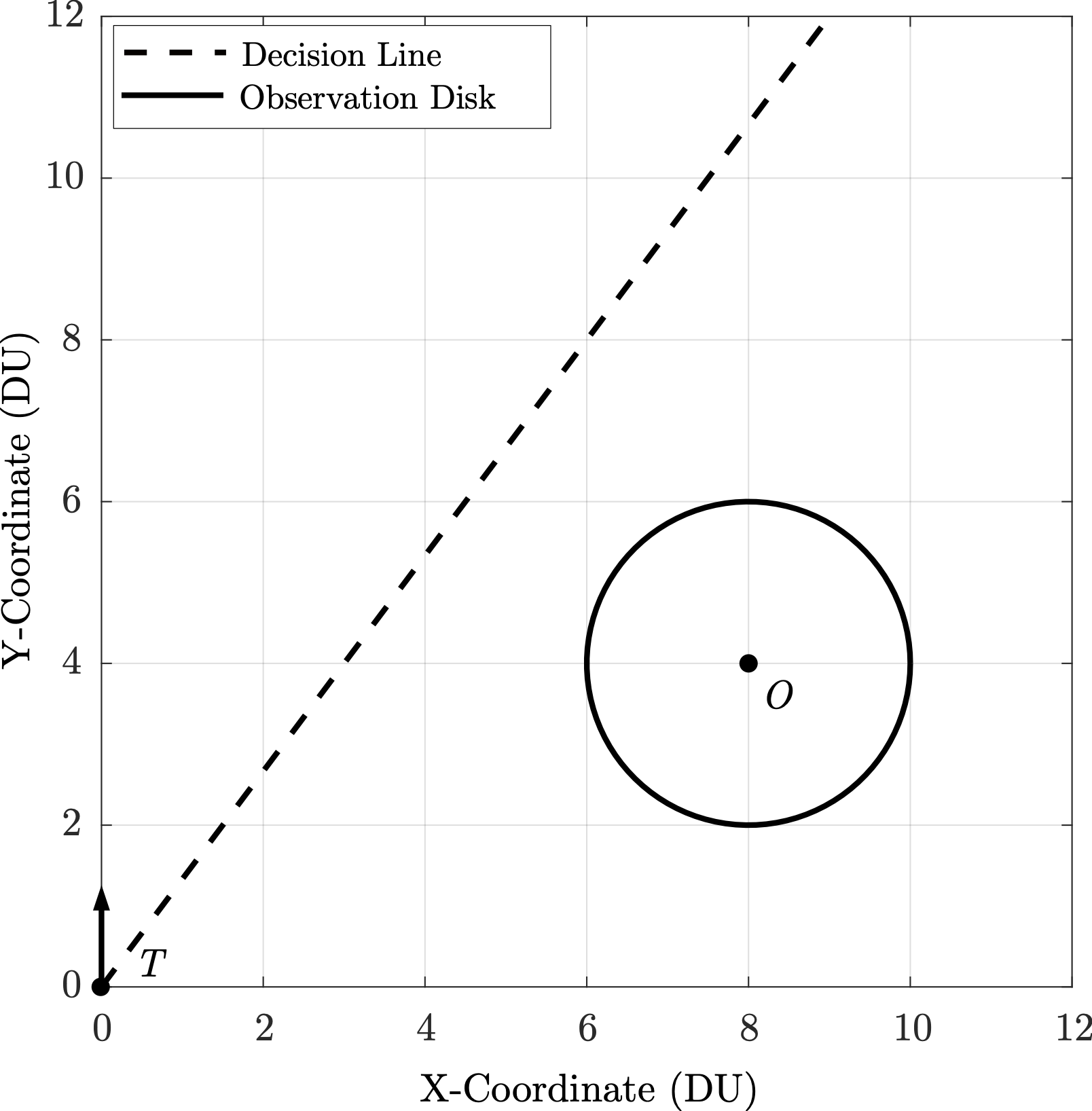}
		\caption{Example A, the observer and its observation disk lie below the decision line and $\mathbf{x}_0 \in \mathscr{B}_1$ and therefore observation is not possible.}
		\label{fig:exampleA}
	\end{figure}
	
	\subsection{Scenario B - Limited Observation}
	\label{ex:B}
	As described by \cref{eq:solution_phase1} in \Cref{thm:solution_phase1}, if $\mathbf{x} \in \mathscr{B}_2$ then there exists a unique optimal heading for the observer in Phase-I and another in Phase-II. These are used to provide the maximum possible observation of the faster non-maneuvering target.
	
	First, the intersections of the DL with the observation disk are found. In this example the DL has equation $y = 0.75x$ and the observation disk is $(x-5)^2+(y-2)^2=4$. The DL and observation disk for this example are plotted in \Cref{fig:exampleB}. By solving the quadratic equation from substituting the DL equation into the observation disk the two intersections are: $(x,y) = (3.01737,2.26303)$ and $(5.30263,3.97697)$. By \Cref{thm:B2}, since two intersections exist, $\mathbf{x} \in \mathscr{B}_2$. Once the membership of $\mathbf{x}$ is obtained, the optimal strategy for Phase-I is provided by \cref{eq:solution_phase1} in \Cref{thm:solution_phase1} and the optimal strategy for Phase-II is provided by \cref{eq:solution_phase2b} in \Cref{thm:solution_phase2}.
	\begin{figure}[htbp]
		\centering
		\includegraphics[width=\linewidth]{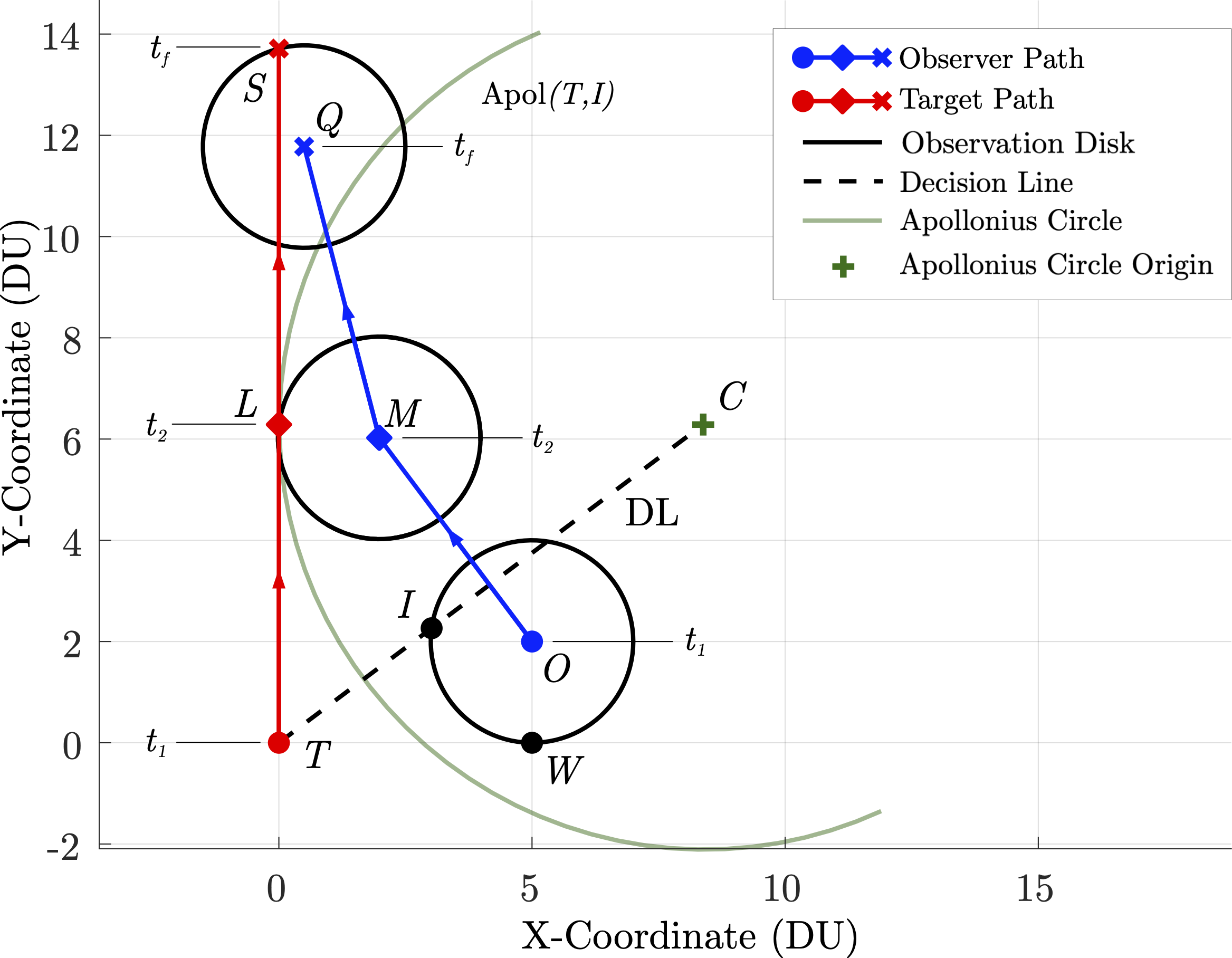}
		\caption{Example B, the observer takes a trajectory in Phase-I to maximize the observation time in Phase-II. Because the DL intersects the observation disk in two places, $\mathbf{x} \in \mathscr{B}_2$ and therefore observation time in limited.}
		\label{fig:exampleB}
	\end{figure}
	Making the appropriate substitutions, the optimal strategy is
	% $\tobs} = 7.4276$.
\begin{equation*}
	\psi_{O}^*(t) = \begin{cases}
		131.4096^\circ & t \in [0,6.2862) \\
		104.4560^\circ & t \in [6.2862,13.7138]
	\end{cases}
\end{equation*}
The states at the critical times are as shown in \Cref{tab:exampleB}.
\begin{table}[htbp]
	\centering
	\caption{Scenario B - Limited Observation - Results}
	$\psi_{O,1}^*[0,6.2862) = 131.4096^\circ$, $\psi_{O,2}^*[6.2862,13.7138] = 104.4560^\circ$
	\begin{tabular}{c c c c c c}
		\hline
		$t_i$  & $t [TU]$ & $x_O [DU]$ & $y_O [DU]$ & $y_T [DU]$  \\ \hline 
		$t_1$ & \texttt{0.0000} & \texttt{5.0000} & \texttt{2.0000} & \texttt{0.0000}\\
		$t_2$ & \texttt{6.2862} & \texttt{1.9826} & \texttt{6.0232} & \texttt{6.2862}\\
		$t_f$ & \texttt{13.714} & \texttt{0.4993} & \texttt{11.777} & \texttt{13.714}
	\end{tabular}
	\label{tab:exampleB}
\end{table}
The approach time is $\tapr = 6.2862\;\text{TU}$ and the observation time is $\tobs = 7.4276\;\text{TU}$.

\subsection{Scenario C - Maximum Observation}
\label{ex:C}
As described by \cref{eq:solution_phase1} in \Cref{thm:solution_phase1}, if $\mathbf{x} \in \mathscr{B}_3$ there there exists a range of optimal headings that the observer can take in Phase-I that ensure a maximum possible observation in Phase-II. Defined by $\lambda_{TO,2} = 0$ in \cref{eq:maxObservation}, the resulting strategy for the observer in Phase-I is \cref{eq:solution_phase2b} in \Cref{thm:solution_phase2} and is $\psi_{O,2} = 90^\circ$. In this example, the limiting cases for $\psi_{O,1}^*$ are considered, highlighting the difference in outcome from implementing either limiting strategy. Both cases are plotted in \Cref{fig:exampleC}.

In this example, the DL has equation $y = 1.0202x$ and the observation disk has equation $(x-3)^2+(y-6)^2 = 4$. The intersection of the DL and observation disk have unreal solutions: $(x,y) = (4.4694 \pm 0.338326i,4.5597 \pm 0.34516i)$. Because the point $W$, whose location is $W = (x_O,y_O-R) = (3,4)$ is above the DL, we know that no intersections exist and that $\mathbf{x} \in \mathscr{B}_3$.
\begin{figure}[htbp]
	\centering
	\includegraphics[width=\linewidth]{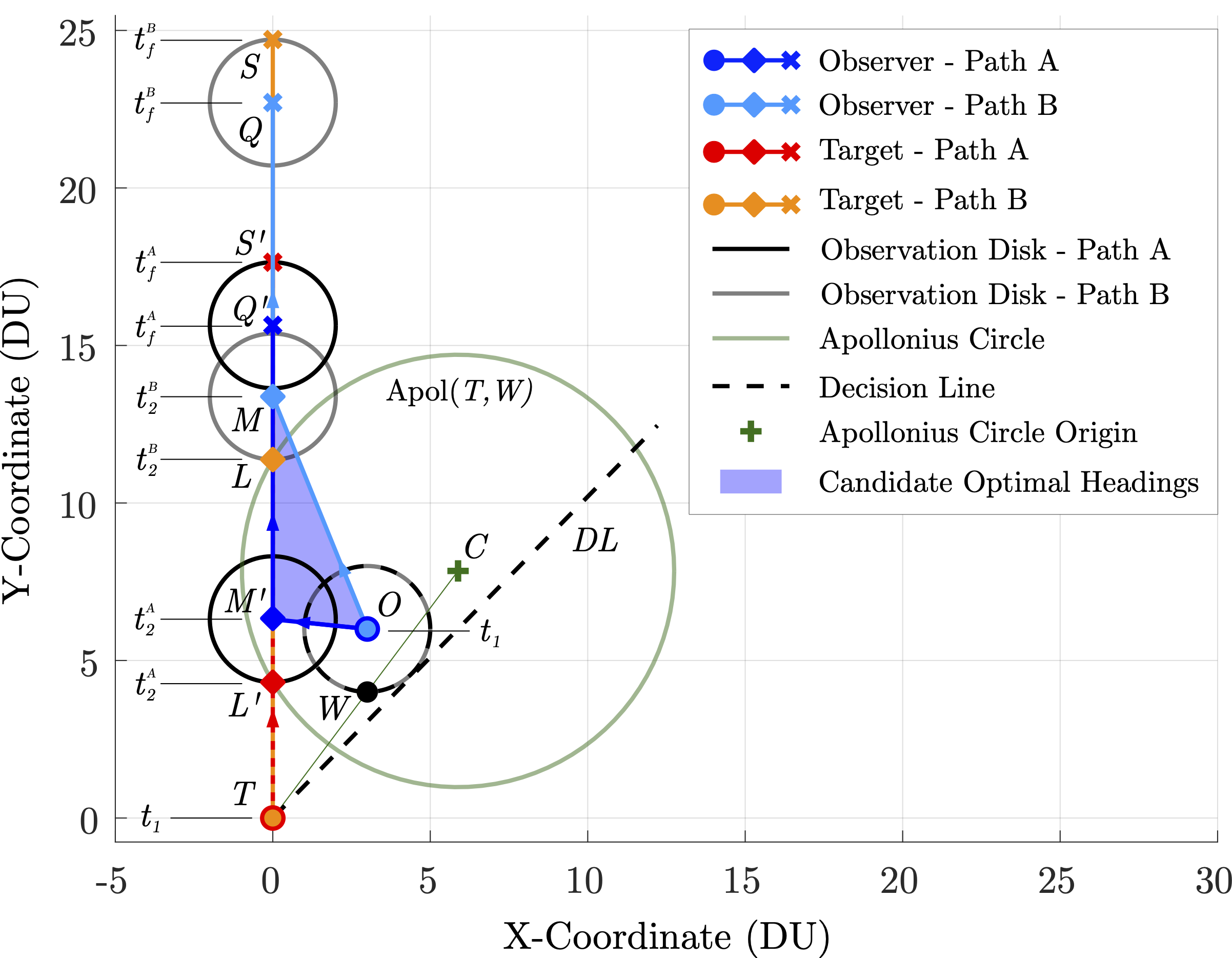}
	\caption{In the event that the point $W$ is above the DL, there exist an interval of feasible optimal observer headings for which $\tobs = \overline{t}_{\text{obs}}$. This figure shows the two limiting cases and the interval of optimal observer headings, Path A and B.}
	\label{fig:exampleC}
\end{figure}
From \Cref{thm:solution_phase1} and \Cref{thm:solution_phase2} the limiting cases for optimal observer strategies are obtained.
\begin{equation*}
	\begin{aligned}
		\psi_{O,1}^* = [112.127^\circ, 174.133^\circ], \;
		\psi_{O,2}^* = 90^\circ
	\end{aligned}
\end{equation*}
As expected, $\tapr$ varies, depending upon the heading taken for Phase-I. For the two limiting cases, $\tapr = [4.3083,11.378]$ TU. However, since $\mathbf{x}\in\mathscr{B}_3$ the maximum observation time is possible for the optimal range of observer headings; $\overline{t}_{\text{obs}} = 13.333$ TU. 

\begin{table}[htbp]
	\centering
	\caption{Scenario C - Maximum Observation - Results}
	Case A: $\psi_{O,1}^* = 174.133^\circ$\\ 
	\begin{tabular}{c c c c c c}
		\hline
		$t_i^\text{case}$  & $t [TU]$ & $x_O [DU]$ & $y_O [DU]$ & $y_T [DU]$  \\ \hline 
		$t_1^A$ & \texttt{0.0000} & \texttt{3.0000} & \texttt{6.0000} & \texttt{0.0000}\\
		$t_2^A$ & \texttt{4.3083} & \texttt{0.0000} & \texttt{6.3083} & \texttt{4.3083}\\
		$t_f^A$ & \texttt{17.642} & \texttt{0.0000} & \texttt{15.642} & \texttt{17.642}
	\end{tabular}\\ \vspace{0.5em}
	Case B: $\psi_{O,1}^* = 112.127^\circ$\\
	\begin{tabular}{c c c c c c}
		\hline
		$t_i^\text{case}$  & $t [TU]$ & $x_O [DU]$ & $y_O [DU]$ & $y_T [DU]$  \\ \hline 
		$t_1^B$ & \texttt{0.0000} & \texttt{3.0000} & \texttt{6.0000} & \texttt{0.0000}\\
		$t_2^B$ & \texttt{11.378} & \texttt{0.0000} & \texttt{13.378} & \texttt{11.378}\\
		$t_f^B$ & \texttt{24.711} & \texttt{0.0000} & \texttt{22.711} & \texttt{24.711}
	\end{tabular}
	\label{tab:exampleC}
\end{table}

\section{Conclusion} \label{sec:Conclusions}
The optimal control laws for an observer to keep a non-maneuvering constant speed target within an observation range for as long as possible have been obtained. The presented analysis and results show that the state space may be partitioned into three regions of space: no observation, limited observation, and maximum observation. Depending upon the initial conditions and problem parameters (speed ratio, $\alpha$, and observation range, $R$), this partitioning is obtained in closed form making it suitable for feedback strategies to be implemented. This is enabled by the construction of a decision line that is determined by the speed ratio parameter, $\alpha$. In order to highlight the three separate regions, three scenarios are shown, demonstrating the solutions to this optimal control problem. Future extensions of this work include observation in 3-D, observation of a maneuvering target via. a differential game formulation, and the inclusion of more observer agents. 

\section*{Acknowledgment}
\begin{small}
	This paper is based on work performed at the Air Force Research Laboratory (AFRL) \textit{Control Science Center}.
	Distribution Unlimited.
	AFRL/RQ 22-OPSEC-PR-354.
\end{small}

\bibliographystyle{IEEEtran}
\bibliography{IEEEabrv,shorttitles,bib.bib}

% Generated by IEEEtran.bst, version: 1.14 (2015/08/26)
\begin{thebibliography}{10}
\providecommand{\url}[1]{#1}
\csname url@samestyle\endcsname
\providecommand{\newblock}{\relax}
\providecommand{\bibinfo}[2]{#2}
\providecommand{\BIBentrySTDinterwordspacing}{\spaceskip=0pt\relax}
\providecommand{\BIBentryALTinterwordstretchfactor}{4}
\providecommand{\BIBentryALTinterwordspacing}{\spaceskip=\fontdimen2\font plus
\BIBentryALTinterwordstretchfactor\fontdimen3\font minus
  \fontdimen4\font\relax}
\providecommand{\BIBforeignlanguage}[2]{{%
\expandafter\ifx\csname l@#1\endcsname\relax
\typeout{** WARNING: IEEEtran.bst: No hyphenation pattern has been}%
\typeout{** loaded for the language `#1'. Using the pattern for}%
\typeout{** the default language instead.}%
\else
\language=\csname l@#1\endcsname
\fi
#2}}
\providecommand{\BIBdecl}{\relax}
\BIBdecl

\bibitem{koopman1946search}
B.~O. Koopman, ``Search and screening,'' Washington, D.C. : Operations
  Evaluation Group, Office of the Chief of Naval Operations, Navy Dept.,
  Washington, D.C., Tech. Rep., 1946.

\bibitem{dobbie1966solution}
J.~M. Dobbie, ``Solution of some surveillance-evasion problems by the methods
  of differential games,'' in \emph{ICOR}.\hskip 1em plus 0.5em minus
  0.4em\relax MIT, John Wiley and Sons New York, New York, 1966.

\bibitem{dobbie1967survey}
------, ``A survey of search theory,'' \emph{Ops. Res.}, vol.~16, no.~3, pp.
  525--537, 1968.

\bibitem{taylor1970application}
J.~G. Taylor, ``Application of differential games to problems of naval warfare:
  Surveillance-evasion: Part {I},'' Naval Postgraduate School, Monterey, CA,
  Tech. Rep., 1970.

\bibitem{isaacs1965differential}
R.~Isaacs, \emph{Differential Games: A Mathematical Theory with Applications to
  Optimization, Control and Warfare}.\hskip 1em plus 0.5em minus 0.4em\relax
  New York: Wiley, 1965.

\bibitem{lewin1975surveillance}
J.~Lewin and J.~Breakwell, ``The surveillance-evasion game of degree,'' \emph{J
  Optimiz Theory App}, vol.~16, no.~3, pp. 339--353, 1975.

\bibitem{lewin1979conic}
J.~Lewin and G.~Olsder, ``Conic surveillance evasion,'' \emph{J Optimiz Theory
  App}, vol.~27, no.~1, pp. 107--125, 1979.

\bibitem{lewin1989isotropic}
J.~Lewin and G.~J. Olsder, ``The isotropic rocket—a surveillance evasion
  game,'' \emph{Comput Math Appl}, vol.~18, no. 1-3, pp. 15--34, 1989.

\bibitem{greenfeld1987differential}
I.~Greenfeld, ``A differential game of surveillance evasion of two identical
  cars,'' \emph{J Optimiz Theory App}, vol.~52, no.~1, pp. 53--79, 1987.

\bibitem{bernhard1970linear}
P.~Bernhard, ``Linear pursuit-evasion games and the isotropic rocket,'' SUDAA,
  Stanford, CA, Tech. Rep., 1970.

\bibitem{gilles2020evasive}
M.~Gilles and A.~Vladimirsky, ``Evasive path planning under surveillance
  uncertainty,'' \emph{Dyn Games Appl}, vol.~10, pp. 391--–416, 2020.

\bibitem{vonmoll2022pure}
A.~Von~Moll, M.~Pachter, and Z.~Fuchs, ``Pure pursuit with an effector,''
  \emph{Dyn Games Appl}, 2022, accepted.

\bibitem{breakwell1975pursuit}
J.~V. Breakwell, \emph{Pursuit of a Faster Evader}.\hskip 1em plus 0.5em minus
  0.4em\relax Springer Netherlands, 1975, pp. 243--256.

\bibitem{hagedorn1976differential}
P.~Hagedorn and J.~V. Breakwell, ``\BIBforeignlanguage{en}{A differential game
  with two pursuers and one evader},'' \emph{\BIBforeignlanguage{en}{J Optimiz
  Theory App}}, vol.~18, no.~1, pp. 15--29, 1976.

\bibitem{szots2021revisiting}
J.~Sz{\H o}ts, A.~V. Savkin, and I.~Harmati,
  ``\BIBforeignlanguage{en}{Revisiting a three-player pursuit-evasion game},''
  \emph{\BIBforeignlanguage{en}{J Optimiz Theory App}}, vol. 190, no.~2, pp.
  581--601, 8 2021.

\bibitem{ramana2017pursuit-evasion}
M.~V. Ramana and M.~Kothari, ``\BIBforeignlanguage{en}{Pursuit-evasion games of
  high speed evader},'' \emph{\BIBforeignlanguage{en}{J Intell Robot Syst}},
  vol.~85, no.~2, pp. 293--306, 2 2017.

\bibitem{garcia2021cooperative}
E.~Garcia and S.~D. Bopardikar, ``Cooperative containment of a high-speed
  evader,'' in \emph{IEEE-ACC}.\hskip 1em plus 0.5em minus 0.4em\relax IEEE,
  2021.

\bibitem{jin2010pursuit-evasion}
S.~Jin and Z.~Qu, ``Pursuit-evasion games with multi-pursuer vs. one fast
  evader,'' in \emph{IEEE-WCICA}.\hskip 1em plus 0.5em minus 0.4em\relax IEEE,
  2010, pp. 3184--3189.

\bibitem{chernousko1976problem}
F.~Chernous'ko, ``\BIBforeignlanguage{en}{A problem of evasion from many
  pursuers},'' \emph{\BIBforeignlanguage{en}{ZAMM}}, vol.~40, no.~1, pp.
  11--20, 1976.

\bibitem{breakwell1979point}
J.~V. Breakwell and P.~Hagedorn, ``\BIBforeignlanguage{en}{Point capture of two
  evaders in succession},'' \emph{\BIBforeignlanguage{en}{J Optimiz Theory
  App}}, vol.~27, no.~1, pp. 89--97, 1979.

\bibitem{garcia2019strategies}
E.~Garcia, A.~Von~Moll, D.~Casbeer, and M.~Pachter, ``Strategies for defending
  a coastline against multiple attackers,'' in \emph{IEEE-CDC}, 2019.

\bibitem{garcia2018capture-the-flag}
E.~Garcia, D.~W. Casbeer, and M.~Pachter, ``The capture-the-flag differential
  game,'' in \emph{IEEE-CDC}.\hskip 1em plus 0.5em minus 0.4em\relax IEEE, 12
  2018.

\bibitem{nath2022two-phase}
S.~Nath and D.~Ghose, ``\BIBforeignlanguage{en}{A two-phase evasive strategy
  for a pursuit-evasion problem involving two non-holonomic agents with
  incomplete information},'' \emph{\BIBforeignlanguage{en}{EJCON}}, vol.~1,
  no.~13, pp. 1--6, 2022.

\bibitem{shinar2009pursuit-evasion}
J.~Shinar, V.~Y. Glizer, and V.~Turetsky, ``\BIBforeignlanguage{en}{A
  pursuit-evasion game with hybrid pursuer dynamics},''
  \emph{\BIBforeignlanguage{en}{EJCON}}, vol.~15, no.~6, pp. 665--684, 1 2009.

\bibitem{turetsky2018pursuit-evasion}
V.~Turetsky and T.~Shima, ``\BIBforeignlanguage{en}{Pursuit-evasion guidance in
  a switched system},'' \emph{\BIBforeignlanguage{en}{SICON}}, vol.~56, no.~4,
  pp. 2613--2633, 1 2018.

\bibitem{vonmoll2021turret}
A.~Von~Moll and Z.~Fuchs, ``Turret lock-on in an engage or retreat game,'' in
  \emph{IEEE-ACC}.\hskip 1em plus 0.5em minus 0.4em\relax IEEE, 2021, pp.
  3188--3195.

\bibitem{vonmoll2022turret-runner-penetrator}
A.~Von~Moll, D.~Shishika, Z.~Fuchs, and M.~Dorothy, ``The
  turret-runner-penetrator differential game with role selection,'' \emph{IEEE
  T-AES}, 2022.

\bibitem{huang2015automation}
H.~Huang, J.~Ding, W.~Zhang, and C.~J. Tomlin, ``Automation-assisted
  capture-the-flag: A differential game approach,'' \emph{IEEE T-CST}, vol.~23,
  no.~3, pp. 1014--1028, 2015.

\bibitem{huang2011differential}
------, ``A differential game approach to planning in adversarial scenarios: A
  case study on capture-the-flag,'' in \emph{IEEE-ICRA}, 2011, pp. 1451--1456.

\bibitem{garcia2021differential}
E.~Garcia, D.~W. Casbeer, D.~Tran, and M.~Pachter, ``A differential game
  approach for beyond visual range tactics,'' in \emph{IEEE-ACC}.\hskip 1em
  plus 0.5em minus 0.4em\relax IEEE, 2021, pp. 3210--3215.

\bibitem{turetsky2019minimum}
V.~Turetsky, M.~Weiss, and T.~Shima, ``\BIBforeignlanguage{en}{Minimum effort
  pursuit guidance with delayed engagement decision},''
  \emph{\BIBforeignlanguage{en}{J Guid Control Dyn}}, vol.~42, no.~12, pp.
  2664--2670, 2019.

\bibitem{weiss2022minimum}
M.~Weiss, V.~Shalumov, and T.~Shima, ``Minimum effort pursuit guidance with
  multiple delayed engagement decisions,'' \emph{J Guid Control Dyn}, vol.~45,
  no.~7, pp. 1310--1319, 2022.

\bibitem{weintraub2020maximum}
I.~E. Weintraub, A.~Von~Moll, E.~Garcia, D.~Casbeer, Z.~J.~L. Demers, and
  M.~Pachter, ``Maximum observation of a faster non-maneuvering target by a
  slower observer,'' in \emph{IEEE-ACC}, 2020.

\bibitem{weintraub2021maximum}
I.~E. Weintraub, A.~Von~Moll, E.~Garcia, and M.~Pachter, ``Maximum observation
  of a target by a slower observer in three dimensions,'' \emph{J Guid Control
  Dyn}, vol.~44, no.~3, 2021.

\bibitem{shneydor1998missile}
N.~A. Shneydor, \emph{Missile guidance and pursuit: kinematics, dynamics and
  control}.\hskip 1em plus 0.5em minus 0.4em\relax Sawston, UK: Woodhead
  Publishing, 1998.

\bibitem{barton2000pursuit}
J.~C. Barton and C.~J. Eliezer, ``On pursuit curves,'' \emph{ANZIAM}, vol.~41,
  no.~3, pp. 358--371, 01 2000.

\bibitem{kirk1970optimal}
D.~Kirk, \emph{Optimal Control Theory: An Introduction}.\hskip 1em plus 0.5em
  minus 0.4em\relax Englewood Cliffs, N.J: Prentice-Hall, 1970.

\bibitem{rhoad1997geometry}
R.~Rhoad, G.~Milauskas, and R.~Whipple, \emph{Geometry for Enjoyment and
  Challenge, New Edition}.\hskip 1em plus 0.5em minus 0.4em\relax McDougal,
  Littel \& Company, 1997.

\end{thebibliography}

\begin{IEEEbiography}[{\includegraphics[width=1in, height=1.25in, clip,, keepaspectratio]{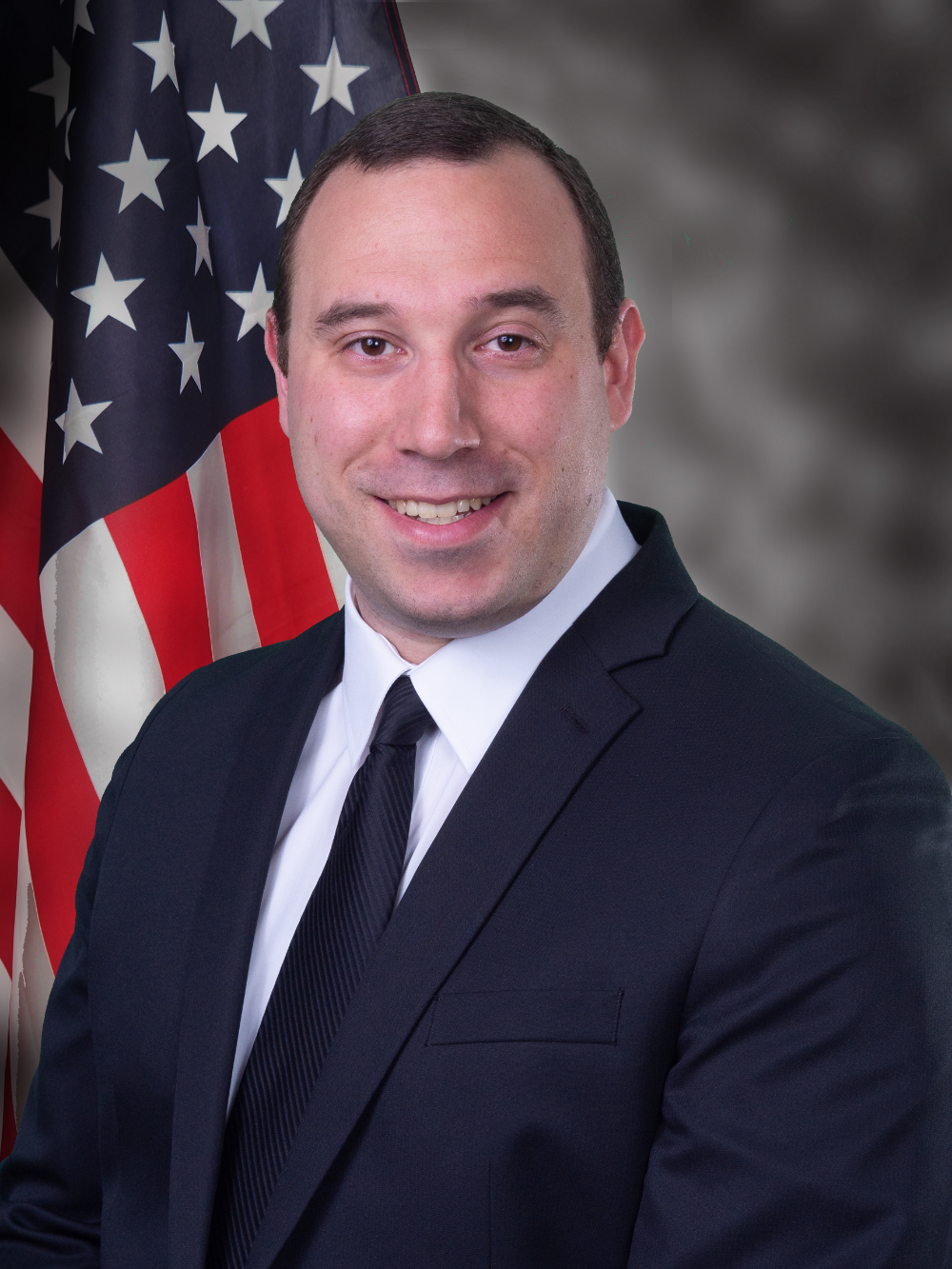}}]{Isaac E. Weintraub} Dr. Weintraub
	(S'09-M'15-SM'21) holds a Ph.D. from The Air Force Institute of Technology (2021), an M.S. in Electrical Engineering from University of Texas at Arlington (2011), and a B.S. in Mechanical Engineering from Rose-Hulman Institute of Technology (2009). He is currently an Electronics Engineer with the Control Science Center, Air Force Research Laboratory, Wright-Patterson Air Force Base, Dayton, OH, USA. %His research interests are in automation and control of aerospace systems for defense applications.
\end{IEEEbiography}
\vspace{-4.0em}
%
% if you will not have a photo at all:
\begin{IEEEbiography}[{\includegraphics[width=1in, height=1.25in, clip, keepaspectratio]{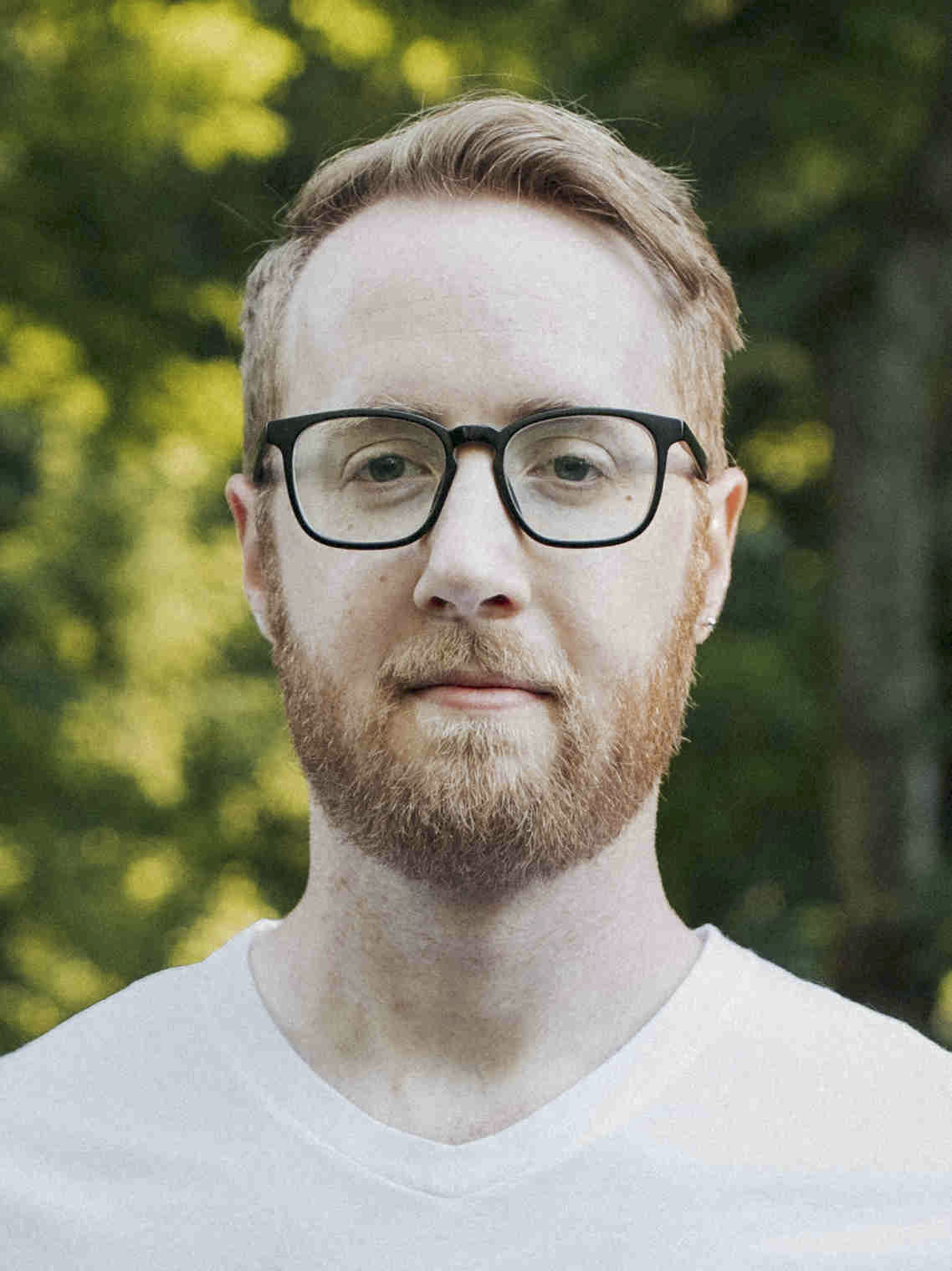}}]{Alexander Von Moll}
	Dr.\ Von Moll holds a B.S.\ in Aerospace Engineering from Ohio State (2012), an M.S.\ in Aerospace Engineering from Georgia Institute of Technology (2016), and a Ph.D.\ from University of Cincinnati (2022).
	Alex was a Department of Defense SMART Scholar, awarded in 2011 and again in 2014.
	His research interests include multi-agent systems, cooperative control, and differential games.
\end{IEEEbiography}
\vspace{-4.0em}
%
% insert where needed to balance the two columns on the last page with
% biographies
%\newpage
%
\begin{IEEEbiography}[{\includegraphics[width=1in, height=1.25in, clip, keepaspectratio]{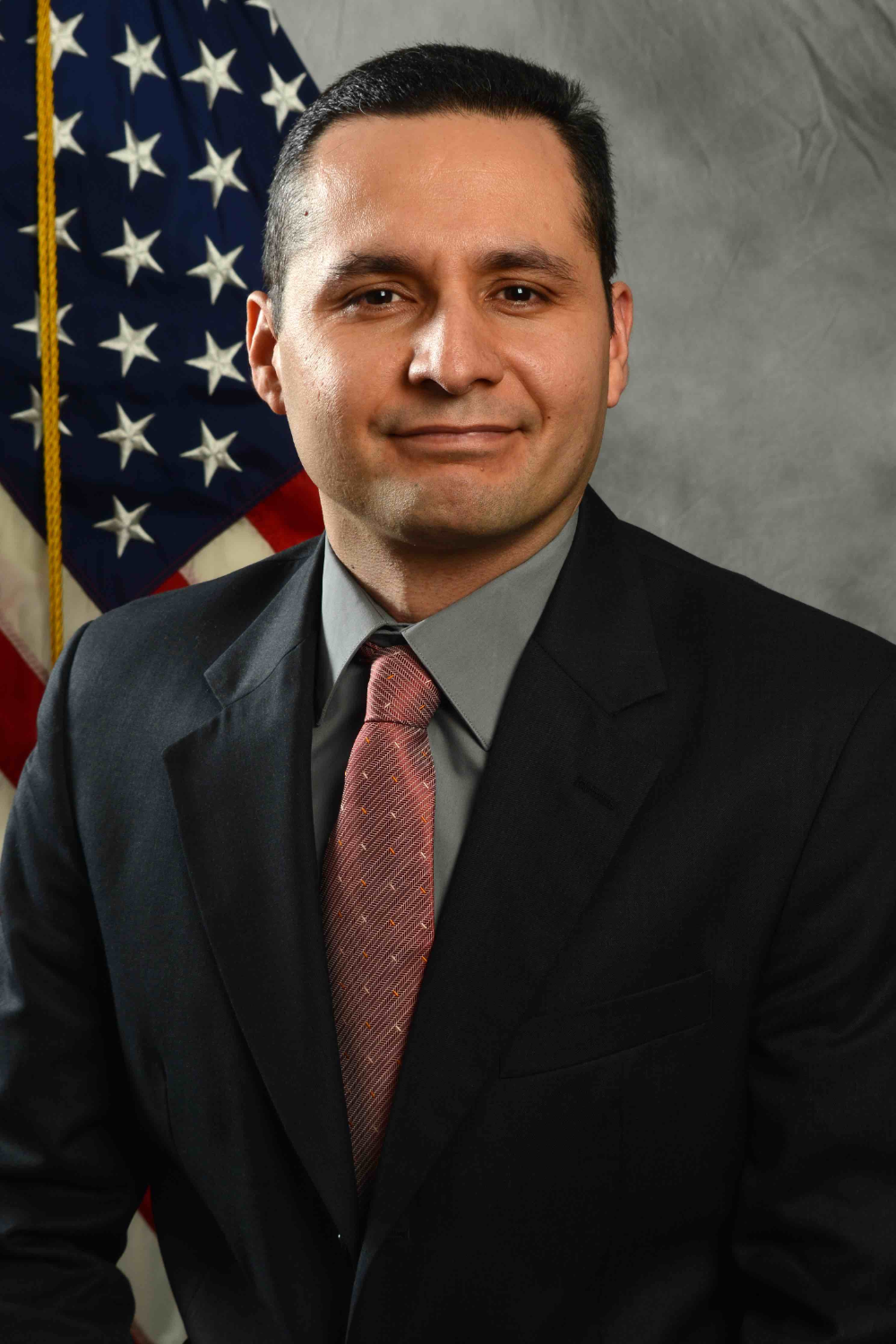}}]{Eloy Garcia}
	Dr. Garcia (SM’17) received the Ph.D. degree from the Electrical Engineering Department, University of Notre Dame, Notre Dame, IN, USA, in 2012. He also holds the M.S. degrees from the University of Illinois, Chicago, IL, USA, and the University of Notre Dame, both in electrical engineering. 
	%From 2012 to 2016, he was a Research Scientist with Infoscitex Corporation, Dayton, OH, USA. 
	He is currently a Research Engineer with the Control Science Center, Air Force Research Laboratory, Wright-Patterson Air Force Base, Dayton, OH, USA. 
	%He is author/editor of the books Model-based Control of Networked Systems and Cooperative Control of Multi-agent Systems: Theory and Applications. Dr. Garcia is also the Chair of the IEEE Technical Committee on Manufacturing Automation and Robotic Control.
\end{IEEEbiography}
\vspace{-4.0em}
\begin{IEEEbiography}[{\includegraphics[width=1in, height=1.25in, clip, keepaspectratio]{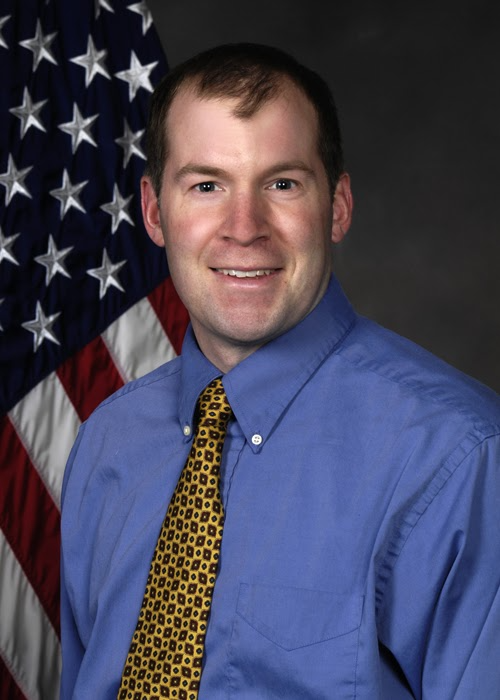}}]{David Casbeer}
	Dr. Casbeer is the Technical Area Lead over Cooperative \& Intelligent UAV Control with the Control Science Center, Aerospace Systems Directorate, Air Force Research Laboratory, where he carries out and leads basic research involving the control of autonomous UAVs with a particular emphasis on high-level decision making and planning under uncertainty.
	He received B.S. and Ph.D. degrees in Electrical Engineering from Brigham Young University in 2003 and 2009, respectively.
\end{IEEEbiography}
\vspace{-4.0em}
\begin{IEEEbiography}[{\includegraphics[width=1in, height=1.25in, clip, keepaspectratio]{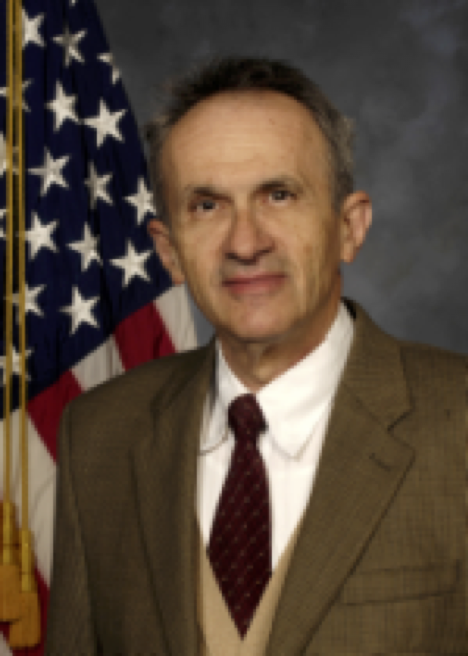}}]{Meir Pachter}
	Dr. Pachter is a Distinguished Professor of Electrical Engineering at the Air Force Institute of Technology, Wright-Patterson AFB.
	Dr. Pachter received the BS and MS degrees in Aerospace Engineering in 1967 and 1969, respectively, and the Ph.D. degree in Applied Mathematics in 1975, all from the Israel Institute of Technology.
	Dr. Pachter is interested in the application of mathematics to the solution of engineering and scientific problems.
	Dr. Pachter is a Fellow of the IEEE.    
\end{IEEEbiography}

% that's all folks
\end{document}